\newtheorem{theorem}{Theorem}[section]
\newtheorem{proposition}[theorem]{Proposition}
\newtheorem{corollary}[theorem]{Corollary}
\newtheorem{definition}[theorem]{Definition}
\newtheorem{remark}[theorem]{Remark}
\numberwithin{equation}{subsection}
\begin{document}

\title[Truncated Infinitesimal Shifts and Quantized Universality of $\zeta(s)$]{Truncated Infinitesimal Shifts, Spectral Operators and Quantized Universality of the Riemann Zeta Function}

\author{Hafedh Herichi }
\address{Department of Mathematics, University of California, Riverside, CA 92521-0135}
\curraddr{}
\email{herichi@math.ucr.edu}
\thanks{}

\author{Michel\,L.\,Lapidus}
\address{Department of Mathematics, University of California, Riverside, CA 92521-0135}
\email{lapidus@math.ucr.edu}
\thanks{The work of M.\,L.\,Lapidus was partially supported by the US National Science Foundation under the research grant DMS-1107750, as well as by the Institut des Hautes Etudes Scientifiques (IHES) where the second author was a visiting professor in the Spring of 2012 while part of this work was completed.}

\subjclass[2010]{\emph{Primary} 11M06, 11M26, 11M41, 28A80, 32B40, 47A10, 47B25, 65N21, 81Q12, 82B27.\,\emph{Secondary} 11M55, 28A75, 34L05, 34L20, 35P20, 47B44, 47D03, 81R40.}
\date{August the 4th, 2012.}

\dedicatory{A Christophe Soul\'e, avec une profonde amiti\'e et admiration, \`a l'occasion de ses 60 ans}

\keywords{Riemann zeta function, Riemann zeros, Riemann hypothesis, spectral reformulations, fractal strings, complex dimensions, explicit formulas, geometric and spectral zeta functions, geometric and spectral counting functions, inverse spectral problems, infinitesimal shift, truncated infinitesimal shifts, spectral operator, truncated spectral operators, universality of the Riemann zeta function, universality of the spectral operator.}

\begin{abstract}
We survey some of the universality properties of the Riemann zeta function $\zeta(s)$ and then explain how to obtain a natural quantization of Voronin's universality theorem (and of its various extensions).\,Our work builds on the theory of complex fractal dimensions for fractal strings developed by the second author and M.\,van Frankenhuijsen in \cite{La-vF4}.\,It also makes an essential use of the functional analytic framework developed by the authors in [\textbf{HerLa1--3}] for rigorously studying the spectral operator $\mathfrak{a}$ (mapping the geometry onto the spectrum of generalized fractal strings), and the associated infinitesimal shift $\partial$ of the real line: $\mathfrak{a}=\zeta(\partial)$, in the sense of the functional calculus.\,In the quantization (or operator-valued) version of the universality theorem for the Riemann zeta function $\zeta(s)$ proposed here, the role played by the complex variable $s$ in the classical universality theorem is now played by the family of `truncated infinitesimal shifts' introduced in [\textbf{HerLa1--3}] to study the invertibility of the spectral operator in connection with a spectral reformulation of the Riemann hypothesis as an inverse spectral problem for fractal strings.\,This latter work provided an operator-theoretic version of the spectral reformulation obtained by the second author and H.\,Maier in \cite{LaMa2}.\,In the long term, our work (along with \cite{La5, La6}), is aimed in part at providing a natural quantization of various aspects of analytic number theory and arithmetic geometry.
\end{abstract}
\newpage

\begin{abstract}
Nous rappelons quelques unes des principales propri\'et\'es d'universalit\'e de la fonction z\^{e}ta de Riemann $\zeta(s)$.\,De plus, nous expliquons comment obtenir une quantification naturelle du th\'eor\`eme d'universalit\'e de Voronin (et de ses g\'en\'eralizations).\,Notre travail est bas\'e sur la th\'eorie des cordes fractales d\'evelopp\'ee par le deuxi\'eme auteur et M.\,van Frankenhuijsen dans \cite{La-vF4}.\\Nous utilisons \'egalement la th\'eorie d\'evelopp\'ee dans [\textbf{HerLa1--3}] par les auteurs de cet article pour \'etudier de fa\c{c}on rigoreuse l'op\'erateur spectral (qui relie la g\'eom\'etrie et le spectre des cordes fractales g\'en\'eraliz\'ees).\,Cet op\'erateur spectral est represent\'ee comme le compos\'e de la fonction z\^{e}ta de Riemann du `shift infinitesimal' $\partial:\,\mathfrak{a}=\zeta(\partial)$.\,Dans le processus du quantification du th\'eor\`eme d'universalit\'e de la fonction z\^{e}ta de Riemann, le r\^{o}le jou\'e par la variable $s$ (au sens classique du th\'eor\'eme d'universalit\'e) (dans le th\'eor\`eme classique d'universalit\'e) est jou\'e par la famille des shifts infinit\'esimaux tronqu\'es afin d'\'etudier l'op\'erateur spectral en lien avec la reformulation spectrale de l'hypoth\'ese de Riemann vue comme un probl\`eme spectral inverse pour les cordes fractales.\,Ce dernier r\'esultat fournit une version op\'eratorielle de la reformulation spectrale obtenue par le second auteur et H.\,Maier dans \cite{LaMa2}.\\Notre pr\'esent travail au long terme, ainsi que \cite{La5, La6}, a en partie pour but d'obtenir une quantification naturelle de divers aspects de la th\'eorie analytiques des nombres et de la g\'eom\'etrie arithm\'etiques. 
\end{abstract}
\maketitle
\newpage \setcounter{tocdepth}{2}
\tableofcontents
\begin{center}
\section{Introduction}
\end{center}

\hspace*{3mm}The universality of the Riemann zeta function states that any non-vanishing analytic function can be approximated uniformly by certain purely imaginary shifts of the zeta function in the right half of the critical strip \cite{Vor2}.\,It was discovered by S.\,M.\,Voronin in 1975.\,Several improvements of Voronin's theorem are given in \cite{Bag1, Lau1, Rei1, Rei2}.\,Further extensions of this theorem to other classes of zeta functions can be found in \cite{Emi, Lau2, LauMa1, LauMa2, LauMaSt, LauSlez, LauSt, St1}.\,In the first part of the present paper, as well as in several appendices, we survey some of these results and discuss their significance for the Riemann zeta function $\zeta$ and other $L$-functions.\\

\hspace*{3mm}In the second part of the paper, we focus on the Riemann zeta function $\zeta(s)$ and its operator (or `quantum') analog, and propose a quantum (or operator-valued) version of the universality theorem and some of its extensions.\,More specifically, in their development of the theory of complex dimensions in fractal geometry and number theory, the spectral operator was introduced heuristically by the second author and M.\,van Frankenhuijsen as a map that sends the geometry of fractal strings to their spectra \cite{La-vF3, La-vF4}.\,A detailed, rigorous functional analytic study of this map was provided by the authors in \cite{HerLa1}.\\

\hspace*{3mm}In this paper, we discuss the properties of the family of `\emph{truncated infinitesimal shifts}' of the real line  and of the associated `\emph{truncated spectral operators}'.\,(See also \cite{HerLa2, HerLa3, HerLa4}.)\,These truncated operators were introduced in \cite{HerLa1} to study the invertibility of the spectral operator and obtain an operator-theoretic version of the spectral reformulation of the Riemann hypothesis (obtained by the second author and H.\,Maier in \cite{LaMa1, LaMa2}) as an inverse spectral problem for fractal strings.\,In particular, we show that $\mathfrak{a}=\zeta(\partial)$, where $\partial$ is the `infinitesimal shift' (the generator of the translations on the real line), viewed as an unbounded normal operator acting on a suitable scale of Hilbert spaces indexed by the (Minkowski) fractal dimension of the underlying (generalized) fractal strings.\\

\hspace*{3mm}Moreover, using tools from the functional calculus and our detailed study of the spectra of the operators involved, we show that any non-vanishing holomorphic function of the truncated infinitesimal shifts can be approximated by imaginary translates of the truncated spectral operators.\,This latter result provides a `natural quantization' of Voronin's theorem (and its extensions) about the universality of the Riemann zeta function.\,We conclude that, in some sense, arbitrarily small scaled copies of the spectral operator are encoded within itself.\,Therefore, we deduce that the spectral operator can emulate any type of complex behavior and that it is `chaotic'.\,In the long term, the theory developed in the present paper and in \cite{HerLa1, HerLa2, HerLa3, HerLa4}, along with the work in \cite{La5, La6}, is aimed in part at providing a natural quantization of various aspects of analytic (and algebraic) number theory and arithmetic geometry.\\

\hspace*{3mm}The rest of this paper is organized as follows:\,in \S2, we discuss the classical universality property of the Riemann zeta function and some of its applications.\,In \S3, we briefly review the theory of (generalized) fractal strings and the associated complex dimensions and explicit formulas, as developed in \cite{La-vF4}.\,We also recall the heuristic definition of the spectral operator introduced in [\textbf{La-vF3}, \textbf{La-vF4}].\,In \S4, we develop the rigorous functional analytic framework of \cite{HerLa1}; we define and study, in particular, the infinitesimal shift $\partial$ and the spectral operator $\mathfrak{a}$, along with their truncated versions, $\partial^{(T)}$ and $\mathfrak{a}^{(T)}$ (for $T>0$).\,In \S5, we provide our quantization (or operator-valued version) of Voronin's theorem for the universality of $\zeta(s)$, along with its natural generalizations in this context.\,It is noteworthy that in this `quantization process', the complex variable $s$ is replaced not by $\partial$ (the infinitesimal shift), as one might reasonably expect, but by the family of truncated infinitesimal shifts $\{\partial^{(T)}\}_{T>0}$.\,In \S6, we propose several possible directions for future research in this area.\,Finally, in three appendices, we provide some additional information and references about the origins of universality, as well as about the extensions of Voronin's universality theorem to other arithmetic zeta functions (including the Dirichlet $L$-functions and the $L$-functions associated with certain modular forms).\\

\section{Universality of the Riemann Zeta Function}

\hspace*{3mm}In this section, we recall some of the basic properties of the Riemann zeta function (in \S2.1), and then focus (in \S2.2) on the universality of $\zeta(s)$ (among all non-vanishing holomorphic functions).\,We also briefly discuss (in \S2.3) some of the mathematical and physical applications of universality.\\

\subsection{The Riemann zeta function $\zeta(s)$}
\hspace*{3mm}The Riemann zeta function is defined as the complex-valued function
\begin{equation}
\zeta(s)=\sum_{n=1}^{\infty}n^{-s},\mbox{\quad for\,\,}Re(s)>1.\label{Zeta}
\end{equation}

\hspace*{3mm}In 1737, Euler showed that this Dirichlet series\footnote{In 1740, Euler initiated the study of the Dirichlet series given in Equation (\ref{Zeta}) for the special case when the complex number $s$ is a positive integer.\,Later on, his work was extended by Chebychev to $Re(s)>1$, where $s\in\mathbb{C}$.} can be expressed in terms of an infinite product over the set $\mathcal{P}$ of all the prime numbers:
\begin{equation}\label{Eq:Rmprd}
\zeta(s)=\sum_{n=1}^{\infty}n^{-s}=\prod_{p\in\mathcal{P}}\frac{1}{1-p^{-s}},\mbox{\quad for $Re(s)>1$}.
\end{equation}
Note that Equation (\ref{Eq:Rmprd}) shows that the Riemann zeta function carries information about the primes, which are encoded in its Euler product.\\

\hspace*{3mm}In 1858, Riemann showed in \cite{Rie} that this function has a meromorphic continuation to all of $\mathbb{C}$ with a single (and simple) pole at $s=1$, which satisfies the \emph{functional equation}
\begin{equation}
\xi(s)=\xi(1-s),\mbox{\,}\,s\in \mathbb{C},\label{Eq:fE}
\end{equation}
where
\begin{equation}
\xi(s):=\pi^{-\frac{s}{2}}\Gamma(\frac{s}{2})\zeta(s)\label{Eq:CZ}
\end{equation}
is the \emph{completed} $($or \emph{global}$)$ Riemann zeta function (Here, $\Gamma$ denotes the classic gamma function.)\,Note that the trivial zeros of $\zeta(s)$ at $s=-2n$ for $n=1, 2, 3, ...,$ correspond to the poles of the gamma function $\Gamma(\frac{s}{2})$.\,Riemann also conjectured that the nontrivial $($or \emph{critical}$)$ zeros of $\zeta(s)$ $($i.e., the zeros of $\zeta(s)$ which are located in the critical strip $0<Re(s)<1$$)$ all lie on the \emph{critical line} $Re(s)=\frac{1}{2}$.\,This famous conjecture is known as the \emph{Riemann hypothesis}.\\

\hspace*{3mm}It is well known that the Euler product in Equation (\ref{Eq:Rmprd}) converges absolutely to $\zeta(s)$ for $Re(s)>1$ and also uniformly on any compact subset of the half-plane $Re(s)>1$.\,We note that the Euler product (or an appropriate substitute thereof) can be useful even in the critical strip $\{0<Re(s)<1\}$ where it does not converge.\,For example, it turns out that a suitable truncated version of this Euler product, namely,
\begin{equation}
\zeta_{N}(s)=\prod_{p\leq N}\big(1-p^{-s}\big)^{-1} \label{TruncEulProd}
\end{equation}
(possibly suitably randomized), played a key role in Voronin's proof of the universality of the Riemann zeta function.\,This idea was due to Harald Bohr's earlier work on the density of the sets consisting of the ranges of $\zeta(s)$ on the vertical lines $L_{s}=\{s\in \mathbb{C}:\,Re(s)=c\}$, with $\frac{1}{2}<c<1$.\,Although it is a known fact that the Riemann zeta function's Euler product (see the right-hand side of Equation (\ref{Eq:Rmprd})) does not converge to $\zeta(s)$ inside the critical strip (in particular, inside the right-hand side of the critical strip $\frac{1}{2}<Re(s)<1$), Voronin showed that a `suitable' truncated version of this Euler product can be used to approximate $\zeta(s)$ on the right-hand side of the critical strip, i.e., on the half-plane $\{\frac{1}{2}<Re(s)<1\}$ (see \cite{Vor2}).\\

\hspace*{3mm}The Riemann zeta function has a number of applications in the mathematical and physical sciences (also, in biology and economics).\,For instance, in analytic number theory, the identity given in Equation (\ref{Eq:Rmprd}) can be used to show that there are infinitely many primes among the integers.\,The Riemann zeta function plays a key role in describing the distribution of the prime numbers.\,It also appears in applied statistics (in the Zipf--Mandelbrot law), as well as in quantum field theory (in the calculation of the Casimir effect).\,In fractal geometry (for instance in the theory of complex dimensions), the Riemann zeta function naturally occurs as a multiplicative factor in the formula relating the geometry and spectra of fractal strings via their geometric and spectral zeta functions (see \cite{La2, La3, LaMa2, La-vF4}).\,The Riemann zeta function has several other interesting applications in physics.\,(See, e.g., \cite{Tit, Edw, Ing, Ivi, Pat, KarVo, Ser, La-vF4, La5} for a more detailed discussion of the theory of the Riemann zeta function and some of its applications; see also Riemann's 1858 original paper \cite{Rie}.)\,It will be shown in the next section that this function is also the first explicit \emph{universal}\footnote{See \S 2.2 for an explanation of the notion of `universality' of $\zeta(s)$ and see also Appendix A about the origins of universality in the mathematical literature.} object that was discovered in the mathematical sciences.

\subsection{Voronin's original universality theorem for $\zeta(s)$ }

\hspace*{3mm}The universality of the Riemann zeta function was established by S.\,M.\,Voronin in 1975.\,This  important property of the Riemann zeta function states that any non-vanishing (i.e., nowhere vanishing) analytic function can be uniformly  approximated by certain purely imaginary shifts of the zeta function in the right half of the critical strip \cite{Vor2}.\,The following is Voronin's original universality theorem:\footnote{We refer to Appendix B for several extensions of Voronin's universality theorem to other elements of the Selberg class of zeta functions.}

\begin{theorem}\label{VorThm}
Let $0<r<\frac{1}{4}$.\,Suppose that $g(s)$ is a non-vanishing continous function on the disk $D=\{s\in\mathbb{C}:\,|s|\leq r\}$, which is analytic $($i.e., holomorphic$)$ in the interior \r{D} $=\{s\in\mathbb{C}:\,|s|<r\}$ of this disk.\,Then, for any $\epsilon>0$, there exists $\tau\geq0$ such that
\begin{equation}
J(\tau):=\sup_{|s|\leq r}|g(s)-\zeta(s+\frac{3}{4}+i\tau)|<\epsilon.
\end{equation}

\hspace*{3mm}Moreover, the set of such $\tau$'s is infinite.\,In fact, it has a positive lower density; i.e., 
\begin{equation}\label{Eq1}
\liminf_{T\to \infty}\frac{1}{T}vol_{1}(\{\tau\in[0,T]:\,J(\tau)<\epsilon\})>0,
\end{equation}
where $vol_{1}$ denotes the Lebesgue measure on $\mathbb{R}$.
\end{theorem}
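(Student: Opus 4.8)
The plan is to follow the classical strategy of Voronin, in the streamlined form due to Bagchi (see also Karatsuba--Voronin and Steuding), whose two pillars are a Hilbert-space rearrangement theorem and the $\mathbb{Q}$-linear independence of the set $\{\log p:p\in\mathcal{P}\}$.

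The first move is to pass from $g$ to a \emph{logarithm}. Since $g$ is continuous and nowhere zero on the contractible disk $D$ and holomorphic in its interior, it admits a continuous logarithm $h=\log g$ on $D$, holomorphic in the interior; after shrinking $r$ slightly and applying Mergelyan's theorem we may assume $h$ is a polynomial, at the cost of replacing $g$ by a nearby nowhere-zero function. It then suffices to find $\tau$ with $\sup_{|s|\le r}\bigl|h(s)-\log\zeta(s+\tfrac34+i\tau)\bigr|<\epsilon'$, a single-valued branch of $\log\zeta$ being available on the shifted disk as long as $\zeta$ stays away from $0$ there; exponentiating back and using the uniform continuity of $\exp$ on bounded sets recovers the original estimate. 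The second move replaces $\zeta$ by a finite Euler product: fixing a half-strip $\tfrac12+\delta\le Re(s)\le1$ that contains the closed disk of radius $r$ about $\tfrac34$, a second-moment estimate — Bohr's observation, made rigorous via the approximate functional equation and classical mean-value bounds for $\zeta$ — gives, for every $\eta>0$ and all large $N$,
\[
\limsup_{T\to\infty}\frac{1}{T}\int_0^T\sup_{|s|\le r}\bigl|\log\zeta(s+\tfrac34+it)-\log\zeta_N(s+\tfrac34+it)\bigr|^2\,dt<\eta,
\]
with $\zeta_N$ as in (\ref{TruncEulProd}). Since $\log\zeta_N(s)=\sum_{p\le N}p^{-s}+O\bigl(\sum_{p}p^{-2\sigma}\bigr)$ uniformly on the strip, the task reduces, up to a set of $t$ of arbitrarily small density, to approximating $h(s)$ on $\{|s|\le r\}$ by $\sum_{p\le N}p^{-s-3/4}e^{-i\tau\log p}$.

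Next, vertical shifts are converted into points of a torus. Because $\{\log p\}_{p\in\mathcal{P}}$ is linearly independent over $\mathbb{Q}$, the Kronecker--Weyl theorem shows that $\tau\mapsto(p^{-i\tau})_{p\le N}$ is equidistributed in the finite torus $\mathbb{T}^{\pi(N)}$; hence it is enough to prove that the set of $(x_p)_{p\le N}\in\mathbb{T}^{\pi(N)}$ with $\sup_{|s|\le r}\bigl|h(s)-\sum_{p\le N}x_p\,p^{-s-3/4}\bigr|<\epsilon''$ has positive Lebesgue measure. Being, up to a null boundary, open, Weyl's criterion then identifies the lower density in (\ref{Eq1}) with this positive measure, so in particular the admissible $\tau$ form an infinite set. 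The heart of the matter is thus a statement about the vectors $u_p(s):=p^{-s-3/4}$ in a Hilbert space $H$ of functions holomorphic on $\{|s|\le r\}$ (e.g.\ a Bergman space): for a set of phases $(x_p)$ of positive Haar measure, $\sum_p x_p\,u_p$ converges in $H$ to a prescribed target $h\in H$. The required inputs are arithmetic: since $\sigma=Re(s+\tfrac34)$ ranges over a subinterval of $(\tfrac12,1)$ one has $\sum_p\|u_p\|_H^2<\infty$ while $\sum_p\|u_p\|_H=\infty$ (because $\sum_p p^{-2\sigma}<\infty$ but $\sum_p p^{-\sigma}=\infty$), and moreover $\sum_p|\langle u_p,e\rangle|=\infty$ for every nonzero $e\in H$, a non-degeneracy that again rests on the $\mathbb{Q}$-independence of the $\log p$ together with a uniqueness principle for Dirichlet-type series. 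Under precisely these hypotheses, a Hilbert-space analogue of Riemann's rearrangement theorem — in this form due to Pechersky — produces phases making $\sum_p x_p u_p$ equal to any given $h\in H$, and a quantitative refinement upgrades this to a set of admissible $(x_p)$ of positive measure. Intersecting the resulting good set of $\tau$ with the (small-density) complement of the bad set from the $\zeta$-versus-$\zeta_N$ comparison and with the set on which $\zeta(s+\tfrac34+i\tau)$ is bounded away from $0$ for $|s|\le r$, one still retains a set of positive lower density, which proves Theorem \ref{VorThm}.

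The step I expect to be the main obstacle is this last one: verifying the non-degeneracy condition on the $u_p$ and, above all, passing from the qualitative rearrangement statement to one delivering a set of phases of \emph{positive} measure. This is exactly where the specific arithmetic of the primes is used in an essential way, and where the argument would collapse for a generic Dirichlet series, so it is the genuine content of universality. A secondary, more routine difficulty is preserving uniformity in $s$ over the disk when moving among $\zeta$, $\log\zeta$, $\zeta_N$ and $\log\zeta_N$; this is handled by Cauchy's estimates, which convert $L^2$-bounds on slightly longer vertical segments into sup-norm bounds on the disk.
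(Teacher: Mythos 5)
The paper does not prove Theorem \ref{VorThm}: it is stated as Voronin's theorem with a citation to \cite{Vor2} (see also \cite{KarVo, Lau1, St1}), so there is no internal proof to compare against. Your sketch follows exactly the classical Voronin--Bagchi route presented in those references: take a logarithm and reduce to a polynomial target via Mergelyan, compare $\log\zeta$ in mean square with the truncated Euler product $\zeta_{N}$ of (\ref{TruncEulProd}), convert vertical shifts into torus points through the Kronecker--Weyl theorem and the $\mathbb{Q}$-linear independence of $\{\log p\}$, and use Pechersky's Hilbert-space rearrangement theorem in a Bergman-type space, with the positive lower density in (\ref{Eq1}) coming from the positive Haar measure of an (essentially open) set of admissible phases. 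In approach this is the standard proof, correctly organized.

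Two places where your outline is thinner than the argument it summarizes. First, the mean-square bound you assert for $\sup_{|s|\le r}\bigl|\log\zeta(s+\frac{3}{4}+it)-\log\zeta_{N}(s+\frac{3}{4}+it)\bigr|$ presupposes that $\log\zeta$ exists on the shifted disk; since RH is not assumed, $\zeta$ may vanish there for some $t$, and the classical proof needs the zero-density estimate $N(\sigma,T)=o(T)$ for $\sigma>\frac{1}{2}$ to show that the exceptional set of $t$ has density zero (this is the same density theorem the paper alludes to in Remark \ref{Rk:2.4}). You do intersect at the end with the set where $\zeta$ stays away from $0$ on the disk, but you never justify that this set has nearly full density, and without that input the final positive-density conclusion does not follow. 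Second, the phrase about a ``quantitative refinement'' of the rearrangement theorem names the conclusion rather than the mechanism: the standard way to get a positive-measure set of phases is to fix one good phase vector from Pechersky's theorem, truncate, note that the good set in the finite torus $\mathbb{T}^{\pi(N)}$ is open and nonempty (hence of positive Haar measure), and control the tail of the series in mean square over the torus before applying Weyl's criterion. With those two steps made explicit, your plan coincides with the proof in the cited literature.
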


\hspace*{3mm}The Riemann zeta function is said to be \emph{universal} since suitable approximate translates (or imaginary shifts) of this function uniformly approximate any analytic target function satisfying the hypothesis given above in Theorem \ref{VorThm}.\,The domain of the uniform approximation of the admissible target function is called the \emph{strip of universality}.\\

\hspace*{3mm}The strongest version of Voronin's theorem (the extended Voronin theorem) is due to Reich and Bagchi (see \cite{KarVo, Lau1, Lau2, St1}); it is given in the following result:

\begin{theorem}\label{Thm:ExtVor}
\hspace*{3mm}Let $K$ be any compact subset of the right critical strip $\{\frac{1}{2}<Re(s)<1\}$, with connected complement in $\mathbb{C}$.\,Let $g:K\to \mathbb{C}$ be a non-vanishing continuous function that is holomorphic in the interior of $K$ $($which may be empty$)$.\\Then, given any $\epsilon>0$, there exists $\tau\geq 0$ $($depending only on $\epsilon$$)$ such that

\begin{equation}
J_{sc}(\tau):=\sup_{s\in K}|g(s)-\zeta(s+i\tau)|\leq \epsilon.
\end{equation}

\hspace*{3mm}Moreover, the set of such $\tau$'s is infinite.\,In fact, it has a positive lower density $($in the precise sense of Equation $($\ref{Eq1}$)$ above, but with $J_{sc}(\tau)$ instead of $J(\tau)$$)$.
\end{theorem}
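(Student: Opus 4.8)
To prove Theorem~\ref{Thm:ExtVor}, the plan is to follow the now-standard route to Voronin-type theorems, due in this generality to Bagchi and Reich: reduce the approximation to a question about a random Dirichlet series, recast it as an equidistribution problem on an infinite-dimensional torus, solve that with a Hilbert-space rearrangement theorem, and finally transfer the conclusion back to $\zeta(s)$ itself by a mean-square estimate. First I would put the target into a convenient normal form. Since $K$ has connected complement, Mergelyan's theorem lets us uniformly approximate $g$ on $K$ by polynomials; since $g$ is non-vanishing, continuous on $K$ and holomorphic in its interior, it has a continuous logarithm on $K$ (connectedness of $\mathbb{C}\setminus K$), so we may instead approximate it by $e^{p(s)}$ for a polynomial $p$. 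Shrinking $K$ slightly, we may assume $K\subset\{\frac{1}{2}<\sigma_{1}\leq Re(s)\leq\sigma_{2}<1\}$ and work in the Bergman space $H$ of square-integrable holomorphic functions on a neighborhood of $K$; it then suffices to approximate the single function $p(s)$ in $H$ by $\log\zeta_{N}(s+i\tau)$, since exponentiation is continuous on $H$ and $\log\zeta_{N}$ is holomorphic and single-valued near $K$ for the relevant $\tau$.

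Second, I would bring in the truncated Euler product $\zeta_{N}(s)$ of Equation~(\ref{TruncEulProd}) and expand $\log\zeta_{N}(s+i\tau)=\sum_{p\leq N}\sum_{k\geq1}\frac{1}{k}\,p^{-ks}(p^{-i\tau})^{k}$, so that the dependence on $\tau$ is carried entirely by the phase vector $(p^{-i\tau})_{p\leq N}\in\mathbb{T}^{\pi(N)}$. Because $\{\log p:p\in\mathcal{P}\}$ is linearly independent over $\mathbb{Q}$, Weyl's criterion (equivalently, Kronecker's theorem) shows that as $\tau$ ranges over $[0,T]$ this vector becomes equidistributed on $\mathbb{T}^{\pi(N)}$ with respect to Haar measure, with a quantitative rate in $T$. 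This converts lower-density statements about the set of admissible $\tau$ into statements about the positive Haar measure of the corresponding sets of phase vectors.

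Third — the \emph{analytic heart of the argument} — I would invoke the Pecherskii rearrangement theorem in Hilbert space: if $\{u_{j}\}\subset H$ satisfies $\sum_{j}\|u_{j}\|^{2}<\infty$ but $\sum_{j}\|u_{j}\|=\infty$, then the set of sums of all convergent rearranged subseries is dense in $H$ (in fact a dense coset of a closed subspace). Applied with $u_{p}(s)=p^{-s}$ on our region, the decisive dichotomy is that $\sum_{p}p^{-2\sigma}<\infty$ while $\sum_{p}p^{-\sigma}=\infty$ for $\frac{1}{2}<\sigma\leq1$: this is exactly the regime of the right critical strip, and it is why universality fails to the right of $Re(s)=1$ and is unavailable (as a theorem) on the line $Re(s)=\frac{1}{2}$. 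The theorem then gives that the closure, over $N\to\infty$ and over phase vectors in $\mathbb{T}^{\pi(N)}$, of the partial sums $\sum_{p\leq N}(\text{phase-twisted }p^{-s})$ is all of $H$; hence $p(s)$, and therefore $g$, can be matched to within any prescribed tolerance by $\log\zeta_{N}(s+i\tau)$ for phase vectors in a set of positive Haar measure, i.e.\ for $\tau$ in a set of positive lower density.

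Finally, I would close the gap between $\zeta_{N}$ and $\zeta$. For $\sigma$ fixed in $(\frac{1}{2},1)$ one has $\frac{1}{T}\int_{0}^{T}|\zeta(\sigma+it)-\zeta_{N}(\sigma+it)|^{2}\,dt\to0$ as $N\to\infty$, uniformly for $\sigma$ in a compact subinterval and after integrating over $K$ — a classical consequence of second-moment bounds for $\zeta$ off the critical line together with an approximate-functional-equation argument — so by Chebyshev's inequality $\sup_{s\in K}|\zeta(s+i\tau)-\zeta_{N}(s+i\tau)|$ is small for $\tau$ outside a set of small density. Intersecting this with the positive-density set from the previous step produces a set of $\tau$ of density $\geq c(\epsilon)>0$ on which $\sup_{s\in K}|g(s)-\zeta(s+i\tau)|\leq\epsilon$, which is precisely the assertion of Theorem~\ref{Thm:ExtVor}, including the positive lower density in the sense of Equation~(\ref{Eq1}) with $J_{sc}$ in place of $J$. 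I expect the main obstacle to be the simultaneous quantitative control of three competing errors — the rearrangement-theorem approximation improves only as $N\to\infty$, the torus equidistribution is sharp only at fixed $N$, and the $\zeta_{N}\to\zeta$ mean-square bound deteriorates as $N$ grows — so that choosing $N=N(\epsilon)$ and then $T$ large enough to keep the favorable set of density bounded below \emph{independently of $T$} is the delicate balancing act at the core of the Bagchi--Reich proof.
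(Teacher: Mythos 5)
You should first be aware that the paper itself contains no proof of Theorem~\ref{Thm:ExtVor}: it is quoted as the extended Voronin theorem of Reich and Bagchi, with the proof delegated to the cited literature (\cite{Bag1, Rei1, KarVo, Lau1, St1}). So the only meaningful comparison is with that literature, and your outline is indeed the standard Bagchi--Reich argument found there: Mergelyan reduction of $g$ to $e^{p(s)}$, passage to $\log\zeta_{N}$ via the truncated Euler product of Equation~(\ref{TruncEulProd}), equidistribution of the phase vector $(p^{-i\tau})_{p\leq N}$ on the torus from the $\mathbb{Q}$-linear independence of $\{\log p\}$, a Hilbert-space denseness (rearrangement) theorem in a Bergman-type space, and a mean-square comparison of $\zeta$ with $\zeta_{N}$ plus a Chebyshev/positive-density intersection. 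The architecture and the order of quantifiers (choose $N=N(\epsilon)$ first, then let $T\to\infty$) are exactly right.

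One step as you state it would not go through literally: your version of the Pecherskii rearrangement theorem, with hypotheses ``$\sum_{j}\|u_{j}\|^{2}<\infty$ but $\sum_{j}\|u_{j}\|=\infty$'', is not sufficient. If all the $u_{j}$ lay in a proper closed subspace, both conditions could hold while the attainable sums stay in that subspace; the correct hypothesis is a directional divergence condition, roughly that $\sum_{j}|\langle u_{j},e\rangle|=\infty$ for every nonzero $e$ in the space (equivalently, that the series can be ``steered'' in every direction by unimodular twists --- and note that what is actually needed here is denseness of the set of sums $\sum_{p\le N}a_{p}\,u_{p}$ with $|a_{p}|=1$, not of rearranged subseries). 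Verifying this condition for $u_{p}(s)=p^{-s-i\theta_p}$ on the Bergman space is the genuinely hard analytic step of the proof: it uses a duality argument representing a functional by a measure or kernel, growth estimates for the associated Laplace--Mellin transform (a Bernstein/Paley--Wiener type argument), and the prime number theorem; the dichotomy $\sum_{p}p^{-2\sigma}<\infty$, $\sum_{p}p^{-\sigma}=\infty$ for $\tfrac12<\sigma<1$ is the correct heuristic but does not by itself yield the conclusion. With that lemma stated and proved correctly (as in \cite{Bag1} or [\textbf{St1}]), the rest of your plan, including the positive lower density in the sense of Equation~(\ref{Eq1}) with $J_{sc}$ in place of $J$, is sound.
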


\begin{remark}\label{Rk:2.4}
The topological condition on the subset $K$ cannot be significantly refined.\,Furthermore, the condition according to which the target function $g(s)$ has to be non-vanishing in the disk $D$ is crucial and cannot be omitted.\,Indeed, an application of Rouch\'e's theorem shows that if such a condition were violated, then one would obtain a contradiction to the density theorem for the zeros of the Riemann zeta function $($a detailed proof of this statement can be found in \cite{St1}$)$.\,This latter result shows that the universality of $\zeta(s)$ is intimately connected to the location of the critical zeros of the Riemann zeta function which are conjectured $($under the Riemann hypothesis$)$ to be located on the vertical line $Re(s)=\frac{1}{2}$.\,We will see in the next subsection that this connection was strengthened in Bagchi's theorem, which relates the universality of $\zeta(s)$ to the Riemann hypothesis.\,$($See Theorem \ref{RHUniv} below.$)$
\end{remark}

\begin{remark}
The proof of Voronin's original theorem $($Theorem \ref{VorThm}$)$ about the universality of the Riemann zeta function, along with that of its various extensions $($including Theorem \ref{Thm:ExtVor}$)$, is \emph{ineffective}, in the sense that it does not give any information about how fast a target function $g(s)$ can be approximated by imaginary translates of $\zeta(s)$ within a given range.\,Moreover, the known proofs of universality neither provide a good estimate for the `first' approximating shift $\tau$ nor a specific bound for the positive lower density of the admissible shifts.\,In this paper, we will not discuss these issues any further.\,Several attempts at solving the effectivity problem for the universality theorems were made by Montgomery, Good and Garunk\v{s}tis $($see \cite{St1, Gar, Goo, Mont}$)$.
\end{remark}

\hspace*{3mm}We refer the interested reader to the books \cite{KarVo, Lau1, St1} for additional information concerning the universality properties of the Riemann zeta function and of other arithmetic zeta functions.\,(See also Appendix B below, provided in \S8.)

\subsection{Some applications of the universality of $\zeta(s)$}

\hspace*{3mm}The universality theorem (Theorem \ref{Thm:ExtVor}) for the Riemann zeta function has several interesting applications, which are related to functional independence, the critical zeros of the Riemann zeta function and therefore, to the Riemann hypothesis (this latter result was obtained in the work of Bagchi \cite{Bag1, Bag2}), the approximation of certain target functions by Taylor polynomials of zeta functions \cite{GauCl}, and also to path integrals in quantum theory (see \cite{BitKuRen}).

\newpage
\begin{center}
\textbf{Extension of the Bohr--Courant density theorem}
\end{center}
The universality theorem (Theorem \ref{Thm:ExtVor}) for the Riemann zeta function provides an extension (due to Voronin in \cite{Vor1}) of the Bohr--Courant classical result \cite{BohCou} about the density of the range of $\zeta(s)$ on an arbitrary vertical line contained in the right critical strip $\{s\in\mathbb{C}:\,\frac{1}{2}<Re(s)<1\}$.\,The following result (also due to Voronin in \cite{Vor1}) can either be deduced from the universality theorem or proved directly (as was done originally in \cite{Vor1}):

\begin{theorem}\label{Thmdens}
Let $x\in(\frac{1}{2}, 1)$ be fixed.\,$($Here, $Re(s)=x$ and $Im(s)=y$.$)$\,Then, for any integer $n\geq1$, the sets
\begin{equation}
\big\{\big((\log\zeta(x+iy)), (\log\zeta(x+iy))',...,(\log\zeta(x+iy))^{(n-1)}\big):\,y\in\mathbb{R}\big\}
\end{equation}
and
\begin{equation}
\big\{\zeta(x+iy), (\zeta(x+iy))',...,(\zeta(x+iy))^{(n-1)}\big):\,y\in\mathbb{R}\big\}
\end{equation}
are dense in $\mathbb{C}^{n}$.\footnote{Here and in the sequel, we use the standard notation for the complex derivative (and higher order derivatives) of an analytic function.}
\end{theorem}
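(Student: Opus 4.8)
The plan is to derive both density statements as corollaries of the extended Voronin universality theorem (Theorem~\ref{Thm:ExtVor}), the only extra ingredient being Cauchy's integral formula, which converts uniform approximation of the \emph{values} of $\zeta$ into approximation of the \emph{vectors of derivatives} that appear in the statement. Fix $x\in(\frac{1}{2},1)$ and choose $r>0$ small enough that the closed disk $\overline{D}:=\{s\in\mathbb{C}:|s-x|\le r\}$ is contained in the right critical strip $\{\frac{1}{2}<Re(s)<1\}$. Then $\overline{D}$ is compact with connected complement in $\mathbb{C}$, so it is an admissible set for Theorem~\ref{Thm:ExtVor}; throughout, given a target accuracy $\epsilon>0$, I will apply that theorem with an auxiliary parameter $\eta>0$ that is chosen small at the end.

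First I would treat the second set (built from $\zeta$ and its derivatives). It suffices to approximate an arbitrary vector $(a_0,a_1,\dots,a_{n-1})\in\mathbb{C}^n$ with $a_0\ne0$, since the vectors with nonzero first coordinate are dense in $\mathbb{C}^n$. Let $g(s):=\sum_{k=0}^{n-1}\frac{a_k}{k!}(s-x)^k$, a polynomial with $g(x)=a_0\ne0$ and $g^{(k)}(x)=a_k$ for $0\le k\le n-1$; after shrinking $r$ if necessary, $g$ is non-vanishing on $\overline{D}$, hence satisfies the hypotheses of Theorem~\ref{Thm:ExtVor} with $K=\overline{D}$. That theorem produces $\tau\ge0$ with $\sup_{s\in\overline{D}}|g(s)-\zeta(s+i\tau)|\le\eta$. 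Since $h(s):=g(s)-\zeta(s+i\tau)$ is holomorphic on the interior of $\overline{D}$ and bounded there by $\eta$, Cauchy's estimates on the circle $|s-x|=r/2$ give, for every $k\in\{0,1,\dots,n-1\}$,
\begin{equation}
\bigl|\,g^{(k)}(x)-\zeta^{(k)}(x+i\tau)\,\bigr|=|h^{(k)}(x)|\le\frac{k!\,\eta}{(r/2)^{k}}.
\end{equation}
As there are only finitely many indices $k$ and $g^{(k)}(x)=a_k$, choosing $\eta$ small (depending only on $\epsilon$, $r$, $n$) forces the vector $\bigl(\zeta(x+i\tau),\zeta'(x+i\tau),\dots,\zeta^{(n-1)}(x+i\tau)\bigr)$ to lie within $\epsilon$ of $(a_0,\dots,a_{n-1})$. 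Taking $y=\tau$ and letting $\epsilon\to0$ gives the density of the second set in $\mathbb{C}^n$.

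For the first set (built from $\log\zeta$ and its derivatives) I would run the same scheme but start from an arbitrary target $(b_0,\dots,b_{n-1})\in\mathbb{C}^n$ and take as model function $g(s):=\exp\bigl(q(s)\bigr)$ with $q(s):=\sum_{k=0}^{n-1}\frac{b_k}{k!}(s-x)^k$; this $g$ is automatically non-vanishing on $\overline{D}$, so Theorem~\ref{Thm:ExtVor} yields $\tau\ge0$ with $\sup_{s\in\overline{D}}|g(s)-\zeta(s+i\tau)|\le\eta$. Since $g$ is non-vanishing on the compact set $\overline{D}$ we have $|g|\ge\delta>0$ there, so for $\eta<\delta/2$ the function $\zeta(\cdot+i\tau)$ is also non-vanishing on $\overline{D}$ and stays in the region where the branch of the logarithm determined by $q=\log g$ is single-valued and Lipschitz; hence $\sup_{s\in\overline{D}}|\log\zeta(s+i\tau)-q(s)|=O(\eta)$. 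Applying the Cauchy estimates exactly as before, now to $q(s)-\log\zeta(s+i\tau)$ and using $q^{(k)}(x)=b_k$, gives the approximation of $\bigl((\log\zeta(x+iy)),\dots,(\log\zeta(x+iy))^{(n-1)}\bigr)$ by $(b_0,\dots,b_{n-1})$, hence the density of the first set. (Alternatively, once the $\log\zeta$ statement is established, the $\zeta$ statement follows from it by composing with the entire function $w\mapsto e^{w}$ and using the chain rule: the induced map on jets $\mathbb{C}^n\to\mathbb{C}^n$ is a continuous polynomial map whose image contains every vector with nonzero first coordinate.)

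I do not expect a genuine obstacle here: the substantive content is entirely supplied by Theorem~\ref{Thm:ExtVor}, and everything else is standard complex analysis. The only points requiring a little care are bookkeeping ones — keeping $\overline{D}$ simultaneously inside the right critical strip and small enough that the polynomial $g$ has no zeros in it (in the $\zeta$-case), and selecting/controlling the branch of $\log\zeta(\cdot+i\tau)$ once $\zeta(\cdot+i\tau)$ is uniformly close to the non-vanishing function $g$ (in the $\log\zeta$-case). The mild ``hard part'' is simply to record the dependence of $\eta$ on $\epsilon$, $r$ and $n$ cleanly enough that letting $\epsilon\to0$ visibly yields density.
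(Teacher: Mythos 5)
The paper itself contains no proof of Theorem \ref{Thmdens}: it is stated as Voronin's result \cite{Vor1}, with the remark that it ``can either be deduced from the universality theorem or proved directly''. Your proposal carries out precisely that deduction, and for the second set (the jet of $\zeta$ itself) it is complete and correct: restricting to targets with $a_0\neq 0$, applying Theorem \ref{Thm:ExtVor} to the Taylor polynomial $g$ on a small closed disk $\overline{D}$ about $x$ (compact, connected complement, contained in the right critical strip, shrunk so that $g$ is zero-free), and converting uniform closeness into closeness of the first $n$ derivatives at the center via Cauchy's estimates on $|s-x|=r/2$ is exactly the standard argument the paper alludes to.

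For the first set there is one spot where your wording glosses over a real issue: the image $g(\overline{D})$ is only a compact set avoiding $0$ and may wind around the origin, so there need not exist a single-valued ``branch of the logarithm determined by $q=\log g$'' on a neighborhood of that image. The clean fix is the quotient trick: $\zeta(s+i\tau)/g(s)$ is holomorphic, nonvanishing and uniformly within $\eta/\delta$ of $1$ on $\overline{D}$, so its principal logarithm is $O(\eta)$ there, and $q(s)+\mathrm{Log}\bigl(\zeta(s+i\tau)/g(s)\bigr)$ is a holomorphic branch of $\log\zeta(\cdot+i\tau)$ on the disk, uniformly $O(\eta)$-close to $q$; Cauchy's estimates then handle the derivative components as before. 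What this by itself does not control is the zeroth component if $\log\zeta(x+iy)$ is read with the conventional normalization (analytic continuation from the half-plane $Re(s)>1$): the branch you construct may differ from it by an element of $2\pi i\mathbb{Z}$, which is invisible to $(\log\zeta)^{(k)}$ for $k\geq 1$ but does affect the approximation of $b_0$. Either state the result for a suitable branch of $\log\zeta$ near $x+iy$, or appeal to the fact (underlying Voronin's own proof, cf. \cite{KarVo, St1}) that universality is actually established at the level of $\log\zeta$, which approximates $q$ directly and removes the ambiguity; with that caveat your argument is the intended one.
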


\begin{center}
\textbf{Functional independence and hypertranscendence of $\zeta(s)$}
\end{center}
Furthermore, the universality of $\zeta(s)$ implies functional independence.\,In 1887, H\"older proved the functional independence of the gamma function, i.e., that the gamma function $\Gamma(s)$ does not satisfy any algebraic differential equation.\,In other words, for any integer $n\geq 1$, there exists no non-trivial polynomial $P$  such that 
\begin{equation}
P(\Gamma(s), \Gamma'(s),...,\Gamma^{(n-1)}(s))=0.
\end{equation}

\hspace*{3mm}In 1900, and motivated by this fact, Hilbert suggested at the International Congress for Mathematicians (ICM) in Paris that the algebraic differential independence of the Riemann zeta function \footnote{We would like to attract the reader's attention to the fact that the original proof of the hypertranscendence of the Riemann zeta function was due to Stadigh.\,Later on, such a proof was given in a more general setting by Ostrowski, using a different mathematical approach (see \cite{Os}).} can be proved using H\"older's above result and also the functional equation for $\zeta(s)$.\,In 1973, and using a `suitable' version of the universality theorem for the Riemann zeta function, S.\,M.\,Voronin \cite{Vor3, Vor4} has established the functional independence of $\zeta(s)$, as we now explain:

\begin{theorem}\label{FuncInd}
Let $z=(z_{0}, z_{1},..., z_{n-1})\in \mathbb{C}^{n}$ and let $N$ be the a nonnegative integer.\,If $F_{0}(z)$, $F_{1}(z)$,..., $F_{N}(z)$ are continuous functions, not all vanishing simultaneously, then there exists some $s\in\mathbb{C}$ such that
\begin{equation}
\sum_{k=0}^{N}s^{k}F_{k}(\zeta(s), \zeta'(s),...,\zeta^{(n-1)}(s))\ne0.
\end{equation} 
\end{theorem}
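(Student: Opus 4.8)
The plan is to derive Theorem~\ref{FuncInd} (the functional independence of $\zeta(s)$) from the density statement of Theorem~\ref{Thmdens}, using essentially the same argument by which algebraic differential independence follows from "sufficiently generic" behavior of the derivative tuples. First I would argue by contradiction: suppose that for every $s\in\mathbb{C}$ we have $\sum_{k=0}^{N}s^{k}F_{k}\bigl(\zeta(s),\zeta'(s),\dots,\zeta^{(n-1)}(s)\bigr)=0$, where the continuous functions $F_{0},\dots,F_{N}$ on $\mathbb{C}^{n}$ do not all vanish identically. The strategy is to fix a point $w=(w_{0},\dots,w_{n-1})\in\mathbb{C}^{n}$ and a value $k_{0}$ such that $F_{k_{0}}(w)\neq 0$; by continuity, $F_{k_{0}}$ is bounded away from $0$ on a small ball $B$ around $w$, while each $F_{k}$ is bounded on $\overline{B}$.

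Next I would exploit Theorem~\ref{Thmdens}: for $x\in(\tfrac12,1)$ fixed, the set of tuples $\{(\zeta(x+iy),\zeta'(x+iy),\dots,\zeta^{(n-1)}(x+iy)):y\in\mathbb{R}\}$ is dense in $\mathbb{C}^{n}$, so there are arbitrarily large $y$ with the derivative tuple lying in $B$. The key additional input is that one can find such $y$'s that are \emph{quantitatively} abundant along the vertical line --- in fact the universality theorem (Theorem~\ref{Thm:ExtVor}), applied to a constant target function $g\equiv w_{0}$ together with control of the first $n-1$ derivatives on a small disk, produces a set of $\tau$'s of positive lower density for which the whole tuple at $x+i\tau$ is close to $w$. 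Along this set the identity $\sum_{k=0}^{N}(x+i\tau)^{k}F_{k}(\zeta(x+i\tau),\dots)=0$ becomes, after dividing by $(x+i\tau)^{k_{0}}$ and letting $\tau\to\infty$, a relation in which the $k_{0}$-th term stays bounded away from $0$ in modulus while the terms with $k<k_{0}$ tend to $0$ (they carry a factor $(x+i\tau)^{k-k_{0}}\to 0$) --- forcing the terms with $k>k_{0}$ to compensate. Choosing $k_{0}$ to be the \emph{largest} index with $F_{k_{0}}\not\equiv 0$ kills the higher terms outright, and one gets $F_{k_{0}}(\zeta(x+i\tau),\dots)\to 0$ along a set of positive lower density, contradicting that $F_{k_{0}}$ is bounded below on $B$ while the tuple stays in $B$.

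Concretely, the steps in order are: (i) reduce to the case of a single nonvanishing $F_{k_{0}}$ by taking $k_{0}=\max\{k:F_{k}\not\equiv 0\}$ and a ball $B$ on which $|F_{k_{0}}|\geq\delta>0$ and each $|F_{k}|\leq M$; (ii) invoke Theorem~\ref{Thm:ExtVor} (with target the constant $w_{0}$, controlling derivatives up to order $n-1$ by a Cauchy-estimate/shrinking-disk argument) to obtain a set $S\subset[0,\infty)$ of positive lower density with the full tuple at $x+i\tau$ in $B$ for $\tau\in S$; (iii) divide the assumed identity by $(x+i\tau)^{k_{0}}$ and estimate: $|F_{k_{0}}(\cdots)|\leq \sum_{k<k_{0}} M\,|x+i\tau|^{k-k_{0}}\to 0$ as $\tau\to\infty$ in $S$; (iv) conclude $\delta\leq|F_{k_{0}}(\cdots)|\to 0$, a contradiction. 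The main obstacle I anticipate is step~(ii): passing from the uniform approximation of the \emph{values} of $\zeta$ to simultaneous control of the \emph{derivatives} $\zeta',\dots,\zeta^{(n-1)}$. This is handled by applying the universality theorem on a slightly larger disk and using Cauchy's integral formula to bound the derivatives of $g(s)-\zeta(s+i\tau)$ on a smaller concentric disk — a standard but essential technical point, which is exactly why Theorem~\ref{Thmdens} is stated for the $n$-tuple of derivatives rather than for the value alone, and which makes the logical dependence \ref{Thm:ExtVor} (or \ref{Thmdens}) $\Rightarrow$ \ref{FuncInd} go through cleanly.
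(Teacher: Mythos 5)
The paper itself gives no proof of Theorem \ref{FuncInd}; it is quoted from Voronin \cite{Vor3, Vor4}, so your proposal can only be compared with the standard argument in the literature, and your skeleton is exactly that argument: assume the identity holds for all $s$, take $k_{0}$ the \emph{largest} index with $F_{k_{0}}\not\equiv 0$, fix a ball $B$ about a point $w$ with $|F_{k_{0}}|\geq\delta$ on $B$ and $|F_{k}|\leq M$ on $\overline{B}$, produce an unbounded set of $\tau$ for which the tuple $\bigl(\zeta(x+i\tau),\dots,\zeta^{(n-1)}(x+i\tau)\bigr)$ lies in $B$, divide by $(x+i\tau)^{k_{0}}$ and let $\tau\to\infty$. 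Steps (i), (iii) and (iv) are correct as you state them.

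The gap is in step (ii), precisely the step you single out as the main obstacle. Applying Theorem \ref{Thm:ExtVor} with the \emph{constant} target $g\equiv w_{0}$ and then using Cauchy estimates controls $\zeta^{(j)}(x+i\tau)$ towards $g^{(j)}(x)=0$ for $1\leq j\leq n-1$, so this construction only reaches tuples near $(w_{0},0,\dots,0)$; but $w$ is dictated by where $F_{k_{0}}$ is nonzero and need not have that special form, so in general no contradiction results. The correct target is the Taylor polynomial $g(s)=\sum_{j=0}^{n-1}\frac{w_{j}}{j!}(s-x)^{j}$, whose derivatives at the centre are the prescribed $w_{j}$: recentre $B$ at a point with first coordinate $\neq 0$ (possible since $B$ is open), shrink the disk so that $g$ is zero-free on it --- this also supplies the non-vanishing hypothesis of Theorem \ref{Thm:ExtVor}, which your proposal never verifies --- and then universality on the closed disk together with Cauchy's formula on a smaller concentric disk yields the positive-lower-density set $S$ with the full tuple in $B$; the rest of your argument then goes through verbatim. (Alternatively, closer to Voronin's original route, one can invoke Theorem \ref{Thmdens} directly, but then one must add an argument that the admissible $y$'s can be taken arbitrarily large, e.g.\ via a positive-density version or by noting that the image of a bounded $y$-interval under the real-analytic tuple map is nowhere dense; plain density alone does not immediately exclude that all such $y$ lie in a bounded set.)
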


\hspace*{3mm}Note that Theorem \ref{FuncInd} implies that the Riemann zeta function does not satisfy any algebraic differential equation.\,As a result, $\zeta(s)$ is \emph{hypertranscendental}.\footnote{Theorem \ref{FuncInd} provides an alternative proof of the solution of one of Hilbert's famous problems proposed in 1900 at the International Congress of Mathematicians in Paris.}\\

\begin{center}
\textbf{Universality of $\zeta(s)$ and the Riemann hypothesis}
\end{center}
\hspace*{3mm}In \cite{Bag1, Bag2}, Bagchi showed that there is a connection between the location of the critical zeros of the Riemann zeta function and the universality of this function.\,The corresponding result can be stated as follows:

\begin{theorem}\label{RHUniv}
Let $K$ be a compact subset of the vertical strip $\{\frac{1}{2}<x<1\}\times\mathbb{R}$, with connected complement.\,Then, for any $\epsilon>0$, we have
\begin{equation}\label{Eq2}
\liminf_{T\to\infty}\frac{1}{T}vol_{1}\big(\big\{\tau\in[0,T]:\,\sup_{s\in K}|\zeta(s)-\zeta(s+i\tau)|<\epsilon\big\}\big)>0
\end{equation}
if and only if the Riemann hypothesis $($RH$)$ is true.\,Here, as before, $vol_{1}$ denotes the Lebesgue measure on $\mathbb{R}$.
\end{theorem}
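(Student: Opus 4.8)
The plan is to prove the two implications separately, reading the equivalence in the form: RH holds if and only if $(\ref{Eq2})$ is valid for every admissible compact set $K$ and every $\epsilon>0$. The direction ``RH $\Rightarrow$ $(\ref{Eq2})$'' I would deduce directly from the extended Voronin theorem (Theorem~\ref{Thm:ExtVor}). Indeed, if RH holds then $\zeta$ is holomorphic and nowhere vanishing on the open right critical strip $\{\frac{1}{2}<Re(s)<1\}$, hence on a neighborhood of the given compact set $K$; in particular, $g:=\zeta|_{K}$ is an admissible target function (non-vanishing, continuous on $K$, holomorphic in the interior of $K$). Applying Theorem~\ref{Thm:ExtVor} to $g$, with $\epsilon/2$ in place of $\epsilon$, then furnishes a set of $\tau\ge 0$ of positive lower density along which $\sup_{s\in K}|\zeta(s)-\zeta(s+i\tau)|\le\epsilon/2<\epsilon$, which is exactly the assertion $(\ref{Eq2})$. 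Since $\epsilon>0$ is arbitrary, this half is complete.

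For the converse I would argue by contraposition, so assume RH fails. By the functional equation and the reflection symmetry of the nontrivial zeros about the critical line, $\zeta$ then has a zero $\rho=\beta+i\gamma$ with $\frac{1}{2}<\beta<1$ (and we may take $\gamma>0$). Choose $r>0$ small enough that the closed disk $\overline{D}(\rho,r)$ is contained in $\{\frac{1}{2}<Re(s)<1\}$ and contains no zero of $\zeta$ other than $\rho$, and set $\delta:=\min_{|s-\rho|=r}|\zeta(s)|>0$ and $K:=\overline{D}(\rho,r)$ --- a compact set with connected complement. Now suppose, toward a contradiction, that $(\ref{Eq2})$ holds for this $K$ with $\epsilon=\delta/2$: there is a set $E\subseteq[0,\infty)$ of positive lower density, say $\ge c>0$, such that $\sup_{s\in K}|\zeta(s)-\zeta(s+i\tau)|<\delta/2$ for every $\tau\in E$. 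For such a $\tau$ one has, on the circle $|s-\rho|=r$, the strict inequality $|\zeta(s+i\tau)-\zeta(s)|<\delta/2<\delta\le|\zeta(s)|$, so by Rouch\'e's theorem the functions $s\mapsto\zeta(s+i\tau)$ and $s\mapsto\zeta(s)$ have the same (positive) number of zeros in $D(\rho,r)$; equivalently, $\zeta$ has a zero in the translated disk $D(\rho+i\tau,r)$, and hence a zero $\rho'$ with $\beta-r<Re(\rho')<\beta+r$ and $|Im(\rho')-(\gamma+\tau)|<r$.

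It then remains to count. For large $T$, each $\tau\in E\cap[0,T]$ produces such a zero $\rho'$ with $0<Im(\rho')\le\gamma+r+T$; conversely, a fixed zero $\rho'$ of $\zeta$ can be produced in this way only for $\tau$ lying in the interval $(Im(\rho')-\gamma-r,\,Im(\rho')-\gamma+r)$ of length $2r$. Consequently, if $N_{\ast}(T)$ denotes the number of zeros of $\zeta$ (counted with multiplicity) having real part exceeding $\beta-r$ and imaginary part at most $\gamma+r+T$, then $vol_{1}(E\cap[0,T])\le 2r\,N_{\ast}(T)$. The left-hand side is $\ge (c/2)T$ for all large $T$, because $E$ has positive lower density. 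On the other hand, since $\beta-r>\frac{1}{2}$, the classical zero-density theorem for $\zeta$ --- the very ingredient recalled in Remark~\ref{Rk:2.4}, namely that the number of zeros with real part larger than any fixed $\sigma>\frac{1}{2}$ and imaginary part in $[0,T]$ is $o(T)$ --- yields $N_{\ast}(T)=o(T)$. These two estimates are incompatible as $T\to\infty$, which is the desired contradiction; therefore RH must hold.

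I expect the delicate point --- the one carrying the real content --- to be precisely this last step: converting the purely \emph{measure-theoretic} lower-density hypothesis on the recurrence set $E$ into a \emph{linear-in-$T$} lower bound for the count of off-critical-line zeros (via the covering of $E\cap[0,T]$ by intervals of length $2r$), and then playing it against the sub-linear zero-density estimate. Care is needed to arrange $K$ so that $\beta-r$ is genuinely bounded away from $\frac{1}{2}$ (so that a nontrivial density bound is available), to take $\epsilon$ strictly below $\delta$ so that Rouch\'e's theorem applies with room to spare, and to check that positive lower density survives the translation $\tau\mapsto\tau+\gamma$; none of these is difficult, but they are where the argument must be set up with care.
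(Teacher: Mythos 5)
Your proposal is correct and follows essentially the same route the paper indicates in the remark after Theorem \ref{RHUniv} (and in Remark \ref{Rk:2.4}): the forward implication by applying the extended Voronin theorem (Theorem \ref{Thm:ExtVor}) to $g:=\zeta|_{K}$, and the converse by contradiction via Rouch\'e's theorem played against the classical zero-density estimate $N(\sigma,T)=o(T)$ for fixed $\sigma>\frac{1}{2}$, which is exactly Bagchi's argument cited in \cite{Bag1, Bag2}. Your covering of $E\cap[0,T]$ by intervals of length $2r$ attached to the produced zeros is the standard way to make the counting precise, and the minor points you flag (taking $\beta-r>\frac{1}{2}$, $r<\gamma$, $\epsilon<\delta$) are handled correctly.
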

As a result, the universality of $\zeta(s)$ can be used to reformulate RH.\\

\begin{remark}
Note that if the Riemann hypothesis is true, then $\zeta(s)$ is a non-vanishing analytic function in the right critical strip $\{\frac{1}{2}<Re(s)<1$\}.\,Hence, the fact that Equation $($\ref{Eq2}$)$ holds follows at once from Voronin's extended theorem $($Theorem \ref{Thm:ExtVor}$)$ applied to $g(s):=\zeta(s)$.\,The converse direction can be established by reasoning by contradiction and applying Rouch\'e's theorem $($from complex analysis$)$ about the number of zeros of the perturbation of an analytic function. 
\end{remark}

\begin{center}
\textbf{Approximation by Taylor polynomials of $\zeta(s)$}
\end{center}

\hspace*{3mm}Another interesting application of the universality theorem for the Riemann zeta function was given in the work of M. Gauthier and R. Clouatre \cite{GauCl}.\,Using Voronin's universality theorem, these authors showed that every holomorphic function on a compact subset $K$ of the complex plane having a connected complement $K^{c}$ can be uniformly approximated by vertical translates of Taylor polynomials of $\zeta(s)$.\footnote{Their result was  suggested in 2006 by Walter Hayman as a possible step toward attempting to prove the Riemann hypothesis (see \cite{GauCl}).}\,Denote by $H(K)$ the set of all complex-valued holomorphic functions in an open neighborhood $\omega$ of $K\subset\mathbb{C}$ and by $T^{f}_{n,\,a}(z):=\sum_{k=0}^{n}\frac{f^{(k)}}{k!}(z-a)^{k}$, the $n$-th Taylor polynomial of $f$ centered at $a$, where $z\in \mathbb{C}$.\,Then we have the following result:

\begin{theorem}
Let $K\subset\mathbb{C}$ with $K^{c}$ connected.\,Let $g\in H(K)$ and $\epsilon>0$.\,Then, for each $z_{0}\in \omega\cap K^{c}$, there exists $\tau\in \mathbb{R}$ and $n\in \mathbb{N}$ such that
\begin{equation}
\sup_{z\in K}|g(z)-T^{\zeta}_{n,\,z_{0}+i\tau}(z+i\tau)|<\epsilon.
\end{equation}
\end{theorem}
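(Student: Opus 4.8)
The plan is to reduce the statement, via a classical Runge polynomial approximation, to a finite-dimensional moment-matching problem, which is then settled by the joint universality (density) theorem for $\zeta$ and its derivatives, Theorem~\ref{Thmdens}. The conceptual point is that one approximates $g$ not by $\zeta$ itself but by a Taylor \emph{polynomial} of $\zeta$; since the coefficients of a polynomial are entirely unconstrained, the non-vanishing hypothesis of Voronin's theorem --- and with it the obstruction coming from the density of the Riemann zeros recalled in Remark~\ref{Rk:2.4} --- never enters. (As always in this circle of ideas, the hypotheses are to be read so that the centre $z_0$ lies in the open right critical strip $\{\frac{1}{2}<Re(s)<1\}$, which is the habitat of Theorem~\ref{Thmdens}.)

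First I would apply Runge's theorem: since $K$ is compact with connected complement $K^{c}$ and $g$ is holomorphic on the open neighbourhood $\omega\supseteq K$, there is a polynomial $P$ with $\sup_{z\in K}|g(z)-P(z)|<\epsilon/2$ --- it is precisely the connectedness of $K^{c}$ that allows one to take the Runge approximants to be polynomials rather than merely rational functions. Expanding $P$ about $z_0$, write $P(z)=\sum_{k=0}^{N}c_{k}(z-z_0)^{k}$ with $N=\deg P$ (which we may harmlessly take to be $\ge 1$), and set $n:=N$. The key --- and entirely elementary --- observation is that, writing $a:=z_0+i\tau$ and $w:=z+i\tau$, one has $w-a=z-z_0$, hence
\[
T^{\zeta}_{n,\,z_0+i\tau}(z+i\tau)=\sum_{k=0}^{N}\frac{\zeta^{(k)}(z_0+i\tau)}{k!}\,(z-z_0)^{k}.
\]
Because $n$ has been chosen equal to $N=\deg P$, there is \emph{no tail} to estimate: approximating $P$ on $K$ by the right-hand side is simply a matter of making the finitely many numbers $\zeta^{(k)}(z_0+i\tau)/k!$ close to $c_{k}$ for $0\le k\le N$. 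Concretely, with $R:=\max_{z\in K}|z-z_0|<\infty$,
\[
\sup_{z\in K}\bigl|P(z)-T^{\zeta}_{n,\,z_0+i\tau}(z+i\tau)\bigr|\;\le\;\sum_{k=0}^{N}\Bigl|\,\frac{\zeta^{(k)}(z_0+i\tau)}{k!}-c_{k}\,\Bigr|\,R^{k}\;\le\;e^{R}\,\max_{0\le k\le N}\bigl|\,\zeta^{(k)}(z_0+i\tau)-k!\,c_{k}\,\bigr|.
\]

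It then remains to invoke Theorem~\ref{Thmdens} with $x=Re(z_0)\in(\frac{1}{2},1)$ and with $N+1$ in place of $n$: since $z_0+i\tau=x+i\bigl(Im(z_0)+\tau\bigr)$, sweeping $\tau$ over $\mathbb{R}$ sweeps the whole vertical line $Re(s)=x$, so the curve $\tau\mapsto\bigl(\zeta(z_0+i\tau),\zeta'(z_0+i\tau),\dots,\zeta^{(N)}(z_0+i\tau)\bigr)$ is dense in $\mathbb{C}^{N+1}$. Choosing $\tau$ with $\max_{0\le k\le N}\bigl|\zeta^{(k)}(z_0+i\tau)-k!\,c_{k}\bigr|<\frac{\epsilon}{2}\,e^{-R}$ and combining with the Runge estimate gives $\sup_{z\in K}\bigl|g(z)-T^{\zeta}_{n,\,z_0+i\tau}(z+i\tau)\bigr|<\epsilon$, which is the asserted conclusion; the quantitative, positive-lower-density form of Voronin's theorem underlying Theorem~\ref{Thmdens} shows in addition that such $\tau$ form a set of positive lower density (and may be taken $\ge 0$).

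The only genuinely hard ingredient is the cited one: Theorem~\ref{Thmdens} is itself a deep consequence of Voronin's universality theorem (Theorem~\ref{VorThm}) and its extensions, and it is exactly the mechanism that lets one prescribe the first $N+1$ Taylor coefficients of $\zeta$ along a vertical line. On the elementary side, the single move that must be spotted is the ``no-tail'' choice $n=\deg P$: it converts the uniform-approximation problem into a finite moment problem and, at the same time, explains why Voronin's non-vanishing hypothesis is unnecessary in this Taylor-polynomial version of universality.
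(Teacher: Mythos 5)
The paper itself offers no proof of this statement: it is quoted from \cite{GauCl} in \S 2.3 without argument, so there is no internal proof to compare against. Your reconstruction --- Runge's theorem to replace $g$ by a polynomial $P$ of degree $N$, the identity $T^{\zeta}_{N,\,z_{0}+i\tau}(z+i\tau)=\sum_{k=0}^{N}\frac{\zeta^{(k)}(z_{0}+i\tau)}{k!}(z-z_{0})^{k}$, and Voronin's joint density theorem (Theorem \ref{Thmdens}) with $x=Re(z_{0})$ to match the $N+1$ Taylor coefficients --- is correct, and it is essentially the Gauthier--Clouatre argument; the ``no tail'' choice $n=\deg P$ and the $e^{R}$ estimate are both sound.

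Two remarks. First, your parenthetical insistence that $z_{0}$ lie in $\{\frac{1}{2}<Re(s)<1\}$ is not a cosmetic reading of the hypotheses: it is necessary both for your proof and for the truth of the statement. If $Re(z_{0})$ is large and $K$ is, say, a small disc near $z_{0}$ with $Re(z_{0})-\max_{z\in K}|z-z_{0}|>1$, then writing $\zeta^{(k)}(s)=\sum_{m\geq 1}(-\log m)^{k}m^{-s}$ and summing gives $T^{\zeta}_{n,\,z_{0}+i\tau}(z+i\tau)=\sum_{m\geq 1}m^{-z_{0}-i\tau}\sum_{k=0}^{n}\frac{(-(z-z_{0})\log m)^{k}}{k!}$, whose $m=1$ term equals $1$ and whose remaining terms are uniformly small for all $n$, $\tau$ and $z\in K$; hence $g\equiv 0$ cannot be approximated, and the statement read literally (arbitrary $z_{0}\in\omega\cap K^{c}$) fails. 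So the printed condition ``$z_{0}\in\omega\cap K^{c}$'' cannot be the operative one (indeed your proof never uses $z_{0}\in\omega$ nor $z_{0}\notin K$); the real hypothesis is exactly the one you impose, $\frac{1}{2}<Re(z_{0})<1$, the habitat of Theorem \ref{Thmdens}. Second, a small caveat: Theorem \ref{Thmdens} as stated gives density only, which suffices for the existence of $\tau\in\mathbb{R}$ demanded by the statement; your closing claim that the admissible $\tau$'s form a set of positive lower density (and may be taken nonnegative) needs the quantitative form of Voronin's theorem rather than Theorem \ref{Thmdens} itself --- harmless here, since it is not part of the assertion being proved.
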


\begin{center}
\textbf{Path integrals, quantum theory and Voronin's universality theorem}
\end{center}

\hspace*{3mm}A physical application of Voronin's theorem about the universality of the Riemann zeta function was obtained by K.\,M.\,Bitar, N.\,N.\,Khuri and H.\,C.\,Ren.\,Within their framework, Voronin's universality theorem was used to explore a new numerical approach (via suitable discrete discretizations) to path integrals in quantum mechanics (see \cite{BitKuRen}).\\

\hspace*{3mm}We will see in \S5 that a `\emph{quantization}' of the universality of the Riemann zeta function was obtained in \cite{HerLa1}.\,It enables us to obtain an operator-theoretic extension of Theorem \ref{Thm:ExtVor} on the universality of the Riemann zeta function.\,This quantization is obtained in terms of a `suitable' truncated version $\mathfrak{a}_{c}^{(T)}$ of the spectral operator $\mathfrak{a}_{c}$ (see \S4 for a detailed discussion of the properties of $\mathfrak{a}_{c}^{(T)}$), a map that relates the geometry of fractal strings to their spectra.\,The study of the spectral operator was suggested by M.\,L.\,Lapidus and M.\,van Frankenhuijsen in their development of the theory of complex dimensions in fractal geometry \cite{La-vF1, La-vF2, La-vF3, La-vF4}.\,Later on, it was thoroughly investigated (within a rigorous functional analytic framework) in \cite{HerLa1} and surveyed in the papers \cite{HerLa2, HerLa3}.\,In the next section, we start by introducing the class of generalized fractal strings and then define the spectral operator for fractal strings.

\section{Generalized Fractal Strings and the Spectral Operator $\mathfrak{a}=\zeta(\partial_{c})$}

\hspace*{3mm}In this section, we first recall (in \S3.1) the notion of generalized fractal string introduced and used extensively in \cite{La-vF1, La-vF2, La-vF3, La-vF4}, for example, in order to obtain general explicit formulas applicable to various aspects of number theory, fractal geometry, dynamical systems and spectral geometry.\,In \S 3.2, after having recalled the original heuristic definition of the spectral operator $\mathfrak{a}$ (as given in \cite{La-vF3, La-vF4}), we rigorously define $\mathfrak{a}=\mathfrak{a}_{c}$ as well as the infinitesimal shift $\partial=\partial_{c}$ as unbounded normal operators acting on a scale of Hilbert spaces $\mathbb{H}_{c}$ parametrized by a nonnegative real number $c$, as was done in \cite{HerLa1, HerLa2, HerLa3}.\,Finally in \S3.3, we briefly discuss some of the properties of the infinitesimal shifts and of the associated translation semigroups.

\subsection{Generalized fractal strings and explicit formulas}
\hspace*{3mm}A \emph{generalized fractal string} $\eta$ is defined as a local positive or complex measure on $(0,+\infty)$ satisfying $|\eta|(0,x_{0})=0$,\footnote{Here, the positive (local) measure $|\eta|$ is the total variation measure of $\eta$.\,(For a review of standard measure theory, see, e.g., \cite{Coh, Fo}.)} for some $x_{0}>0$.\,For instance, if we consider the ordinary fractal string\footnote{An ordinary fractal string $\mathcal{L}$ is a bounded open subset of the real line.\,Such a set consists of countably many (bounded) open intervals $\{(a_{j}, b_{j})\}_{j}^{\infty}$, the lengths of which are denoted by $l_{j}=b_{j}-a_{j}$, for each $j\geq 1$.\,We simply write $\mathcal{L}=\{l_{j}\}_{j=1}^{\infty}$.\,For instance, the Cantor string, defined as the complement of the ternary Cantor set in the interval $[0,1]$, is an example of an ordinary fractal string.\,It is explicitly defined as $\mathcal{L}_{CS}=\{l_{j}\}_{j=1}^{\infty}$, where the corresponding lengths are $l_{j}=3^{-j}$ with the corresponding multiplicities $w_{j}=2^{j-1}$, for each $j\geq 1$.} $\mathcal{L}=\{l_{j}\}_{j=1}^{\infty}$ with integral multiplicities $w_{j}$, then the measure $\eta_{\mathcal{L}}$ associated to $\mathcal{L}$ is a standard example of a generalized fractal string:
\begin{equation}
\eta_{\mathcal{L}}:=\sum_{j=1}^{\infty}w_{j}\delta_{\{l_{j}^{-1}\}},
\end{equation}
where $\delta_{\{x\}}$ is the Dirac delta measure (or the unit point mass) concentrated at $x>0$ and $l_{j}$ are the scales associated to the connected components (or open intervals) constituting the ordinary fractal string $\mathcal{L}$.\,We refer the reader to \cite{La-vF1, La-vF2, La-vF3, La-vF4} for more information about the theory of ordinary fractal strings and many of its applications.

\begin{remark}
In contrast to an ordinary fractal string, the multiplicities $w_{j}$ of a generalized fractal string $\eta$ may be non-integral.\,For example, the prime string 
\begin{equation}
\eta:=\sum_{m\geq1,\,p}(\log p) \delta_{\{p^{m}\}} 
\end{equation}
$($where p runs over all prime numbers$)$, is clearly a generalized $($and not an ordinary$)$ fractal string.\,Furthermore, observe that the multiplicities need not be positive numbers either.\,A simple example of this situation is the M\"obius string
\begin{equation}\label{GfMob}
\eta_{\mu}:=\sum_{j=1}^{\infty}\mu(j)\delta_{\{j\}},
\end{equation}
where $\mu(j)$ is the M\"obius function.\footnote{The M\"obius function $\mu(j)$ equals 1 if $j$ is a square-free positive integer with an even number of prime factors, -1 if $j$ is a square-free positive integer with an odd number of prime factors, and 0 if $j$ is not square-free.}\,This string can be viewed as the measure associated to the fractal string $\mathcal{L}_{\mu}=\{j^{-1}\}_{j=1}^{\infty}$ with real multiplicities $w_{j}=\mu(j)$.\,Note that this string is not an ordinary fractal string.\,As a result, the use of the word `generalized' is well justified for this class of strings.
\end{remark}

\hspace*{3mm}The \emph{dimension} $D_{\eta}$ of a generalized fractal string $\eta$ is the abscissa of convergence of the Dirichlet integral $\int_{0}^{\infty}x^{-s}\eta(dx)$:
\begin{equation}
D_{\eta}:=\inf\biggl\{\sigma\in\mathbb{R}:\,\int_{0}^{\infty}x^{-\sigma}|\eta|(dx)<\infty\biggl\},
\end{equation}
The geometric \emph{counting function} of $\eta$ is 
\begin{equation}
N_{\eta}(x):=\int_{0}^{x}\eta(dx)=\frac{1}{2}(\eta(0,x)+\eta(0,x]).
\end{equation}
The \emph{geometric} zeta function associated to $\eta$ is the Mellin transform of $\eta$:
\begin{equation}
\zeta_{\eta}(s):=\int_{0}^{\infty}x^{-s}\eta(dx) \mbox{\quad for\,}Re(s)>D_{\eta}.
\end{equation}

\hspace*{3mm}From now on, we will assume that $\zeta_{\eta}$ has a meromorphic extension to a suitable open connected neighborhood $\mathcal{W}$ of the half-plane of absolute convergence of $\zeta_{\eta}(s)$ (i.e., $\{Re(s)>D_{\eta}\}$).\,The set $\mathcal{D}_{\eta}(\mathcal{W})$ of visible \emph{complex dimensions} of $\eta$ is defined by 
\begin{equation}
\mathcal{D}_{\eta}(\mathcal{W}):=\{\omega\in\mathcal{W}:\,\zeta_{\eta}\mbox{\,has a pole at\,}\omega\}.
\end{equation}

\hspace*{3mm}For instance, the geometric zeta function of the M\"obius string (see Equation (\ref{GfMob})) is given by
\begin{equation}
\zeta_{\mu}(s)=\sum_{j=1}^{\infty}\frac{\mu(j)}{j^{s}}=\frac{1}{\zeta(s)},
\end{equation}
the reciprocal of the Riemann zeta function.\\

The \emph{spectral measure} $\nu$ associated to $\eta$ is defined by
\begin{equation}
\nu(A)=\sum_{k=1}^{\infty}\eta\big(\frac{A}{k}\big),
\end{equation} 
for any bounded Borel set $A\subset(0,+\infty)$.\,Then $N_{\nu}$ is called the \emph{spectral counting function} of $\eta$.\\

\hspace*{3mm}The \emph{spectral zeta function} $\zeta_{\nu}$ associated to $\eta$ is the geometric zeta function associated to $\nu$.\,It turn out (as shown in \cite{La-vF1, La-vF2, La-vF3, La-vF4}) that the spectral zeta function and the geometric zeta function of a generalized fractal string $\eta$ are related via the following formula:\footnote{In the special case of ordinary fractal strings, Equation (\ref{Geospec}) was first observed in \cite{La2} and used in \cite{La3, La4, LaPo1, LaPo2, LaMa1, LaMa2, La-vF1, La-vF2, La-vF3, La-vF4, Tep, La5, LalLa1, LalLa2, HerLa1, HerLa2, HerLa3}.\,Furthermore, a formula analogous to the one given in Equation (\ref{Geospec}) exists for other generalizations of ordinary fractal strings, including the class of fractal sprays (also called higher-dimensional fractal strings); see \cite{LaPo3, La2, La3, La-vF1, La-vF2, La-vF3, La-vF4}.}
\begin{equation}\label{Geospec}
\zeta_{\nu}(s)=\zeta_{\eta}(s).\zeta(s),
\end{equation}
where the factor $\zeta(s)$ is the Riemann zeta function.\\

\hspace*{3mm}Next, we introduce two generalized fractal strings which will play an important role in the definition of the spectral operator, its operator-valued prime factors and operator-valued Euler product, namely, the \emph{harmonic generalized fractal string}
\begin{equation}
\mathfrak{h}:=\sum_{k=1}^{\infty}\delta_{\{k\}},
\end{equation}
and for a fixed but arbitrary prime $p\in\mathcal{P}$, \emph{the prime harmonic string} 
\begin{equation}
\mathfrak{h}_{p}:=\sum_{k=1}^{\infty}\delta_{\{p^{k}\}}.
\end{equation}
These two generalized fractal strings are related via the multiplicative convolution of measures $\ast$ as follows (henceforth, $\mathcal{P}$ denotes the set of all prime numbers):
\begin{equation}
\mathfrak{h}=\underset{p\in\mathcal{P}}{\ast}\mathfrak{h}_{p}.
\end{equation}
As a result, we have 
\begin{equation}\label{Eq:Eph}
\zeta_{\mathfrak{h}}(s)=\zeta_{\underset{ p\in\mathcal{P}}{\ast\mathfrak{h_{p}}}}(s)=\zeta(s)=\underset{ p\in\mathcal{P}}{\prod}\frac{1}{1-p^{-s}}=\underset{ p\in\mathcal{P}}{\prod}\zeta_{\mathfrak{h}_{p}}(s),
\end{equation}
for $Re(s)>1$.\\

\hspace*{3mm}Given a generalized fractal string $\eta$, its spectral measure $\nu$ is related to  $\mathfrak{h}$ and $\mathfrak{h}_{p}$ via the following formula (where $\ast$ denotes the multiplicative convolution on $(0,+\infty)$):
\begin{equation}
\nu=\eta\ast\mathfrak{h}=\eta\ast\big(\underset{p\in\mathcal{P}}{\ast}\mathfrak{h}_{p}\big)=\underset{p\in\mathcal{P}}{\ast}\nu_{p},
\end{equation}
where $\nu_{p}=\eta\ast\mathfrak{h}_{p}$ is the spectral measure of $\mathfrak{h}_{p}$ for each $p\in\mathcal{P}$.\,Note that by applying the Mellin transform to the first equality of this identity, one recovers Equation (\ref{Geospec}).\\  

\hspace*{3mm}In their development of the theory of fractal strings in fractal geometry, the second author and M.\,van Frankenhuijsen obtained \emph{explicit distributional formulas}\footnote{Note that the original explicit formula was first obtained by Riemann in \cite{Rie} as an analytical tool aimed at understanding the distribution of the primes.\,We refer the reader to \cite{Edw, Ing, Ivi, Pat, Tit} for more details about Riemann's explicit formula and also to Appendix A in \cite{HerLa2} in which we provide a discussion of Riemann's explicit original formula and the explicit distributional formula obtained in [\textbf{La-vF4}, \S5.3 \& \S5.4].} associated to a given generalized fractal string $\eta$.\,These explicit formulas express the k-th distributional primitive (or anti-derivative) of $\eta$ (when viewed as a distribution) in terms of its complex dimensions.\,For simplicity and given the needs of our functional analytic framework, we will only present these formulas in a restricted setting and for the case of a strongly languid generalized fractal string.\footnote{Roughly speaking, a generalized fractal string $\eta$ is said to be \emph{strongly languid} if its geometric zeta function $\zeta_{\eta}$ satisfies some suitable polynomial growth conditions; see [\textbf{La-vF4}, \S5.3].}\,We encourage the curious reader to look at [\textbf{La-vF4}, \S5.3 \& \S5.4] for more details about the general statements of the explicit distributional formulas and a variety of their applications.\\

\hspace*{3mm}Given a strongly languid generalized fractal string $\eta$, and applying the explicit distributional formula at level $k=0$, we obtain an explicit representation of $\eta$, called the \emph{density of geometric states formula} (see [\textbf{La-vF4}, \S6.3.1]):\footnote{For simplicity, we assume here that all of the complex dimensions are simple poles of $\zeta_{\eta}$ and are different from 1.}
\begin{equation}\label{distr1}
\eta=\sum_{\,\omega\in\mathcal{D_{\eta}(W)\,}}res(\zeta_{\eta}(s);\omega)x^{\omega-1}.
\end{equation}
Applying the explicit distributional formula (at the same level $k=0$) to the spectral measure $\nu=\eta\ast\mathfrak{h}$, we obtain the following representation of $\nu$, called \emph{the density of spectral states formula} (or \emph{density of frequencies formula}) (see [\textbf{La-vF4}, \S6.3.1]):
\begin{equation}\label{distr2}
\nu=\zeta_{\eta}(1)+\sum_{\omega\in \mathcal{D}_{\eta}(W)}res(\zeta_{\eta}(s);\omega)\zeta(\omega)x^{\omega-1}.
\end{equation}

\begin{remark}
Many applications and extensions of fractal string theory and/or of the corresponding theory of complex fractal dimensions can be found throughout the books \cite{La-vF2, La-vF3, La-vF4, La-vF5, La5} and in \cite{La1, La2, La3, La4, LaPo1, LaPo2, LaPo3, LaMa1, LaMa2, HeLa, La-vF1, HamLa, Tep, LaPe, LaPeWi, LaLeRo, ElLaMaRo, LaLu1, LaLu2, LaLu-vF1, LaLu-vF2, LalLa1, LalLa2, LaRaZu, HerLa1, HerLa2, HerLa3, HerLa4, La6}.\,These include, in particular, applications to various aspects of number theory and arithmetic geometry, dynamical systems, spectral geometry, geometric measure theory, noncommutative geometry, mathematical physics and nonarchimedean analysis.
\end{remark}

\subsection{The spectral operator and the infinitesimal shifts $\partial_{c}$ of the real line}

\hspace*{3mm}The spectral operator was introduced by the second author and M.\,van Frankenhuijsen in [\textbf{La-vF3}, \S6.3.2] as a tool to investigate the relationship between the geometry and spectra of (generalized) fractal strings.\,While several aspects of their theory of complex dimensions in fractal geometry were in the process of being developed, they defined it `\emph{heuristically}' as the map that sends the geometry onto the spectrum of generalized fractal strings, but without providing the proper functional analytic framework needed to study it.\,(See [\textbf{La-vF3}, \S6.3.2] and [\textbf{La-vF4}, \S6.3.2].)\,A rigorous analytic framework enabling us to study the spectral operator was provided for the first time in \cite{HerLa1}.\,It was also surveyed in the papers \cite{HerLa2, HerLa3}.\,We will start by first defining the spectral operator and its operator-valued Euler product.\\

\hspace*{3mm}Given a generalized fractal string $\eta$, and in light of the distributional explicit formulas [\textbf{La-vF4}, Theorems 5.18 \& 5.22] (see Equations (\ref{distr1}) and (\ref{distr2}) above), the spectral operator $\mathfrak{a}$ was \emph{heuristically} defined as \emph{the operator mapping the density of geometric states of $\eta$ to its density of spectral states}:
\begin{equation}
\eta\mapsto \nu.
\end{equation}
Now, considering the level $k=1$ (in the explicit distributional formula),\footnote{That is, roughly speaking, take \textquotedblleft the antiderivative\textquotedblright \,of the corresponding expressions.} the spectral operator $\mathfrak{a}=\mathfrak{a}_{c}$ will be defined on a suitable weighted Hilbert space $\mathbb{H}_{c}$\footnote{The Hilbert space $\mathbb{H}_{c}$ will depend on a parameter $c\in\mathbb{R}$ which appears in the weight $\mu_{c}(t)=e^{-ct}dt$ defining $\mathbb{H}_{c}$ (see Equation (\ref{Hsp}) and the text surrounding it).\,As a result, the spectral operator will be denoted in the rest of the paper by either $\mathfrak{a}$ or $\mathfrak{a}_{c}$.} as \emph{the operator mapping the geometric counting function} $N_{\eta}$ \emph{onto the spectral counting function} $N_{\nu}$:
\begin{equation}
N_{\eta}(x)\longmapsto\nu(N_{\eta})(x):=N_{\nu}(x)=\sum_{n=1}^{\infty}N_{\eta}\left(\frac{x}{n}\right).
\end{equation}
Using the change of variable $x=e^{t}$, where $t\in\mathbb{R}$ and $x>0$, we obtain an \emph{additive} representation of the spectral operator $\mathfrak{a}$,
\begin{equation}\label{Eq:spop}
f(t)\mapsto \mathfrak{a}(f)(t)=\sum_{n=1}^{\infty}f(t-\log n).
\end{equation}
For each prime $p\in \mathcal{P}$, the \emph{operator-valued Euler factors} $\mathfrak{a}_{p}$ are given by
\begin{equation}
f(t)\mapsto \mathfrak{a}_{p}(f)(t)=\sum_{m=0}^{\infty}f(t-m\log p).
\end{equation}
All of these operators are related by an \emph{Euler product} as follows:

\begin{equation}
f(t)\mapsto \mathfrak{a}(f)(t)=\left(\prod_{p\in \mathcal{P}}\mathfrak{a}_{p}\right)(f)(t),\label{Eq:EPr}
\end{equation} 
where the product is the composition of operators.\\

\hspace*{3mm}Using the Taylor series representation of the function $f$,\footnote{Here, we will assume for pedagogical reasons that the complex-valued function $f$ is infinitely differentiable.\,Of course, this is not always the case for an arbitrary generalized fractal string (since the atoms of $\eta$ create discontinuities in the geometric counting function $f:=N_{\eta}$, for instance).\,This issue is addressed in \cite{HerLa1} by carefully studying the associated semigroup of operators; see Proposition \ref{prop:4.5} below.} 
\begin{equation}\label{Eq:Tfunc}
f(t+h)=e^{h\frac{d}{dt}}(f)(t)=e^{h\partial}(f)(t),
\end{equation}
where $\partial=\frac{d}{dt}$ is \emph{the infinitesimal shift of the real line} (or also the first order differential operator), we obtain the following representations of the spectral operator:
\begin{align}
\mathfrak{a}(f)(t)&=\sum_{n=1}^{\infty}e^{-(\log n)\partial}(f)(t)=\sum_{n=1}^{\infty}\left(\frac{1}{n^{\partial}}\right)(f)(t)\notag\\
                  &=\zeta(\partial)(f)(t)=\zeta_{\mathfrak{h}}(\partial)(f)(t).\label{Eq:Spr}
\end{align}
  
\hspace*{3mm}For all primes $p$, the operator-valued prime factors are given by
\begin{align}
\mathfrak{a}_{p}(f)(t)&=\sum_{m=0}^{\infty}f(t-m\log p)=\sum_{m=0}^{\infty}e^{-m(\log p)\partial}(f)(t)=\sum_{m=0}^{\infty}\left(p^{-\partial}\right)^{m}(f)(t)\notag\\
                      &=\left(\frac{1}{1-p^{-\partial}}\right)(f)(t)=(1-p^{-\partial})^{-1}(f)(t)=\zeta_{\mathfrak{h}_{p}}(\partial)(t),\label{Eq:Pf}
\end{align}
and hence, the operator-valued Euler product of the spectral operator $\mathfrak{a}$ is given by
\begin{equation}
\mathfrak{a}=\prod_{p\in \mathcal{P}}(1-p^{-\partial})^{-1}(f)(t)\label{Opvalprod}.
\end{equation}
\begin{remark}
Within our functional analytic framework, and in light of the above new representation of the spectral operator $\mathfrak{a}_{c}$, its operator-valued prime factors and operator-valued Euler product, the function $f$ will not necessarily represent the $($geometric or spectral$)$ counting function of some generalized fractal string $\eta$ but will instead be viewed as an element of the weighted Hilbert space $\mathbb{H}_{c}$ which was introduced in \cite{HerLa1} in order to study the spectral operator $($following, but also suitably modifying, a suggestion originally made in \cite{La-vF3}$)$.
\end{remark}
\begin{remark}
The above representation of the spectral operator given in Equation $($\ref{Eq:Spr}$)$ was justified in the functional analytic framework provided in \cite{HerLa1}.\\Furthermore, the representation of the operator-valued Euler product $\prod_{p\in\mathcal{P}}\mathfrak{a}_{p}$ $($see Equation $($\ref{Opvalprod}$)$$)$ and its operator-valued prime factors $\mathfrak{a}_{p}$  $($see Equation $($\ref{Eq:Pf}$)$$)$ were justified in \cite{HerLa4}.\,In particular, for $c>1$, the Euler product converges in the operator norm $($of the Hilbert space $\mathbb{H}_{c}$$)$.\,We note that the operator-valued Euler product associated to $\mathfrak{a}_{c}$ $($see Equation $($\ref{Opvalprod}$)$$)$ is conjectured to converge $($in an appropriate sense$)$ to $\mathfrak{a}_{c}$ in the critical strip; that is, for $0<c<1$.\,$($See $[$\emph{\textbf{La-vF3}}, \S6.3.3$]$ or $[$\emph{\textbf{La-vF4}}, \S6.3.3$]$.$)$\,In \cite{HerLa4}, a `\emph{suitable mode of convergence}' will be considered in order to address this conjecture.\,A full discussion of the operator-valued prime factors and operator-valued Euler product will be omitted in this paper.\,Instead, we will focus on studying some of the properties of the spectral operator $\mathfrak{a}_{c}$ and of its truncations $\mathfrak{a}^{(T)}_{c}$, as well as of the infinitesimal shift $\partial_{c}$ and its truncations $\partial_{c}^{(T)}$, which are the central characters of the present paper.\,Indeed, they will play a key role in our quantization of the universality of the Riemann zeta function, as will be discussed in \S5.\,$($See \S4 for a detailed discussion of these operators and some of their properties.$)$
\end{remark}

\hspace*{3mm}The spectral operator $\mathfrak{a}_{c}$ was rigorously defined in \cite{HerLa1} as an unbounded linear operator acting on the weighted Hilbert space
\begin{equation}
\mathbb{H}_{c}=L^{2}(\mathbb{R},\mu_{c}(dt)),\label{Hsp}
\end{equation}
where $c\geq 0$ and $\mu_{c}$ is the absolutely continuous measure on $\mathbb{R}$ given by $\mu_{c}(dt):=e^{-2ct}dt$, with $dt$ being the standard Lebesgue measure on $\mathbb{R}$.\,More precisely, $\mathfrak{a}_{c}$ is defined by
\begin{equation}
\mathfrak{a}_{c}(f)(t)=\zeta(\partial_{c})(f)(t).\label{RSpop}
\end{equation}
In view of Equation (\ref{RSpop}), the infinitesimal shift $\partial_{c}$ clearly plays an important role in our proposed definition of the spectral operator.\,The domain of $\partial_{c}$ is given by 
\begin{equation}
D(\partial_{c})=\{f\in \mathbb{H}_{c}\cap AC(\mathbb{R}):\,f'\in \mathbb{H}_{c}\},\label{Eq:acf}
\end{equation}
where $AC(\mathbb{R})$  is the space of (locally) absolutely continuous functions on $\mathbb{R}$ and $f'$ denotes the derivative of $f$, viewed either as an almost everywhere defined function or as a distribution.\footnote{For information about absolutely continuous functions and their use in Sobolev theory, see, e.g., \cite{Fo} and \cite{Br}; for the theory of distributions, see, e.g., \cite{Schw}, \cite{Ru} and \cite{Br}.}\,Furthermore, the domain of the spectral operator is given by
\begin{equation}
D(\mathfrak{a}_{c})=\{f\in D(\partial_{c}):\,\mathfrak{a}_{c}(f)=\zeta(\partial_{c})(f)\in\mathbb{H}_{c}\}.\label{dpart}
\end{equation}
Moreover, for $f\in D(\partial_{c})$, we have that
\begin{equation}
\partial_{c}(f)(t):=f'(t)=\frac{df}{dt}(t),
\end{equation}
almost everywhere (with respect to $dt$, or equivalently, with respect to $\mu_{c}(dt)$).\,In short, we write that $\partial_{c}(f)=f'$, where the equality holds in $\mathbb{H}_{c}$.\\

\hspace*{3mm}Finally, Equation (\ref{RSpop}) holds for all $f\in D(\mathfrak{a}_{c})$, and $\zeta(\partial_{c})$ is interpreted in the sense of the functional calculus for unbounded normal operators (see, e.g., \cite{Ru}).\,Indeed, as is recalled in Theorem \ref{Thm:normpartial} below, we show in \cite{HerLa1} that the operator $\mathfrak{a}_{c}$ is \emph{normal} $($i.e., that it is a closed, possibly unbounded operator, and that it commutes with its adjoint$)$.\,(It is bounded for $c>1$, but unbounded for $0<c\leq 1$, as is also shown in \cite{HerLa1, HerLa2, HerLa3}.)

\begin{remark}
The weighted Hilbert space $\mathbb{H}_{c}$ is the space of $($$\mathbb{C}$-valued$)$ Lebesgue square-integrable functions $f$ $($on $\mathbb{R}$$)$ with respect to the weight function $w(t)=e^{-2ct}$; namely, as was stated in Equation $($\ref{Hsp}$)$, $\mathbb{H}_{c}=L^{2}(\mathbb{R}, e^{-2ct}dt)$.\,It is equipped with the inner product
\begin{equation}
<f,\,g>_{c}:=\int_{\mathbb{R}}f(t)\overline{g(t)} e^{-2ct}dt,
\end{equation} 
where $\overline{g}$ denotes the complex conjugate of $g$.\,Therefore, the norm of an element of $\mathbb{H}_{c}$ is given by
\begin{equation}
||f||_{c}:=\bigg(\int_{\mathbb{R}}|f(t)|^{2}e^{-2ct}dt\bigg)^{\frac{1}{2}}.
\end{equation}
\end{remark}

\hspace*{3mm}It is shown in \cite{HerLa1} that the functions which lie in the domain of $\partial_{c}$ (and hence also those belonging to the domain of $\mathfrak{a}_{c}$, $\partial_{c}^{(T)}$ or $\mathfrak{a}_{c}^{(T)}$) satisfy `\emph{natural boundary conditions}'.\,Namely, if $f\in D(\partial_{c})$, then 
\begin{equation}
|f(t)|e^{-ct}\to 0 \mbox{\quad as\,\,}t\to\pm\infty.\label{SPopBdcond}
\end{equation}

\begin{remark}
The boundary conditions given in Equation $($\ref{SPopBdcond}$)$ are satisfied by any function $f$ in $D(\partial_{c})$ $($see Equation $($\ref{Eq:acf}$)$$)$ or in the domain of a function of $\partial_{c}$ such as the spectral operator $\mathfrak{a}_{c}=\zeta(\partial_{c})$ $($see Equation $($\ref{dpart}$)$$)$, as well as the truncated infinitesimal shifts $\partial_{c}^{(T)}$ $($see Equation $($\ref{Eq:truncshif}$)$$)$ and the truncated spectral operators $\mathfrak{a}_{c}^{(T)}$ $($see Equation $($\ref{Eq:ttsp}$)$$)$.\,Furthermore, clearly, if $\eta$ represents an ordinary fractal string $\mathcal{L}$, then $N_{\mathcal{L}}$ vanishes identically to the left of zero, and if, in addition, the $($Minkowski$)$ dimension of $\mathcal{L}$ is strictly less than $c$, then it follows from the results of \cite{LaPo2} that $N_{\eta}(t)=o(e^{ct})$ as $t\to +\infty$ $($i.e., $N_{\eta}(x)=o(x^{c})$ as $x\to +\infty$, in the original variable $x=e^{t}$$)$, so that $f:=N_{\eta}$ then satisfies the boundary conditions given by $($\ref{SPopBdcond}$)$.
\end{remark}

\begin{remark}
Note that it also follows from the results of \cite{LaPo2} $($see also \cite{La-vF4}$)$ that for $c$ in the `critical interval' $(0,1)$, the parameter $c$ can be interpreted as the least upper bound for the $($Minkowski$)$ fractal dimensions of the allowed underlying fractal strings.\footnote{For the notion of Minkowski dimension, see, e.g., \cite{Man, La1, Fa, Mat, La-vF4}.\,Recall that it was observed in \cite{La2} (using a result of \cite{BesTa}) that for an ordinary fractal string $\mathcal{L}$ (represented by a bounded open set $\Omega\subset \mathbb{R}$), the abscissa of convergence of $\zeta_{\mathcal{L}}$ coincides with the Minkowski dimension of $\mathcal{L}$ (i.e., of $\partial\Omega$); for a direct proof of this result, see [\textbf{La-vF4}, Theorem 1.10].}
\end{remark}

\hspace*{3mm}An intrinsic connection between the representations obtained in Equation (\ref{Eq:Spr}) was given in \cite{HerLa1}.\,Our next result justifies the representation of the spectral operator $\mathfrak{a}_{c}$ as the composition map of the Riemann zeta function and the infinitesimal shift $\partial_{c}$ (see Equation (\ref{Eq:Spr})):

\begin{theorem}
\,Assume that $c>1$.\,Then, $\mathfrak{a}$ can be uniquely extended to a bounded operator on $\mathbb{H}_{c}$ and, for any $f\in \mathbb{H}_{c}$, we have
\begin{equation}
\mathfrak{a}(f)(t)=\sum_{n=1}^{\infty}f(t-\log n)=\zeta(\partial_{c})(f)(t)=\left(\sum_{n=1}^{\infty}n^{-\partial_{c}}\right)(f)(t),\label{Eq:spcf}
\end{equation}
where the equalities hold for almost all $t\in \mathbb{R}$ as well as in $\mathbb{H}_{c}$.\label{Thm:11}
\end{theorem}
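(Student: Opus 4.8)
The plan is to deduce the theorem entirely from the functional calculus for the normal operator $\partial_c$, exploiting that for $c>1$ the spectrum of $\partial_c$ sits inside the half-plane $\{Re(s)>1\}$ of absolute convergence of $\zeta$. First I would recall (from \cite{HerLa1}; see also Theorem \ref{Thm:normpartial} below) that $\partial_c$ is an unbounded normal operator on $\mathbb{H}_c$ whose spectrum is the vertical line $\sigma(\partial_c)=\{\lambda\in\mathbb{C}:Re(\lambda)=c\}$: the unitary map $f\mapsto e^{-ct}f$ from $\mathbb{H}_c$ onto $L^{2}(\mathbb{R},dt)$ conjugates $\partial_c$ into $\frac{d}{dt}+c$, which the Fourier transform diagonalizes as multiplication by $i\xi+c$. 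Since $c>1$, the triangle inequality applied to the Dirichlet series (\ref{Zeta}) yields $\sup_{Re(\lambda)=c}|\zeta(\lambda)|=\zeta(c)<\infty$, so $\mathfrak{a}:=\zeta(\partial_c)$, defined via the functional calculus for the normal operator $\partial_c$, is a bounded normal operator with $\|\mathfrak{a}\|\le\zeta(c)$; uniqueness of the bounded extension is then automatic, since a bounded operator is determined by its restriction to any dense subspace (e.g.\ the dense domain on which the heuristic spectral operator of (\ref{Eq:spop}) was originally defined).

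Next I would identify the individual terms of the series. For each integer $n\ge1$, the functional calculus gives $n^{-\partial_c}=e^{-(\log n)\partial_c}$; and because $\partial_c$ is the infinitesimal generator of the translation group on $\mathbb{H}_c$ (see Proposition \ref{prop:4.5} and the surrounding discussion, or \cite{HerLa1}), this operator acts concretely as $(n^{-\partial_c}f)(t)=f(t-\log n)$ for every $f\in\mathbb{H}_c$. A change of variables then yields $\|n^{-\partial_c}f\|_c=n^{-c}\|f\|_c$, hence $\|n^{-\partial_c}\|=n^{-c}$, consistently with $\sup_{Re(\lambda)=c}|n^{-\lambda}|=n^{-c}$.

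Now I would assemble the series. With $\zeta_N(s):=\sum_{n=1}^{N}n^{-s}$, the functional calculus gives $\sum_{n=1}^{N}n^{-\partial_c}=\zeta_N(\partial_c)$ and $\|\zeta(\partial_c)-\zeta_N(\partial_c)\|\le\sup_{Re(\lambda)=c}|\zeta(\lambda)-\zeta_N(\lambda)|\le\sum_{n>N}n^{-c}\to0$; since moreover $\sum_{n}\|n^{-\partial_c}\|=\sum_{n}n^{-c}=\zeta(c)<\infty$, the series $\sum_{n\ge1}n^{-\partial_c}$ converges absolutely in the operator norm of $\mathbb{H}_c$, with sum $\mathfrak{a}=\zeta(\partial_c)$. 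Evaluated at a fixed $f\in\mathbb{H}_c$, this gives $\sum_{n=1}^{N}f(\cdot-\log n)\to\mathfrak{a}(f)$ in $\mathbb{H}_c$. To upgrade this to a.e.\ convergence, observe that $\big\|\sum_{n=1}^{N}|f(\cdot-\log n)|\big\|_c\le\zeta(c)\,\|f\|_c$ for every $N$, so by the monotone convergence theorem the function $G:=\sum_{n\ge1}|f(\cdot-\log n)|$ lies in $\mathbb{H}_c$; in particular $G(t)<\infty$ for a.e.\ $t$, the series $\sum_{n}f(t-\log n)$ converges absolutely for a.e.\ $t$, and its pointwise sum necessarily coincides with the $\mathbb{H}_c$-limit $\mathfrak{a}(f)$. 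This establishes all three equalities in (\ref{Eq:spcf}), both a.e.\ and in $\mathbb{H}_c$.

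The step requiring genuine care, as opposed to routine estimation, is the identification in the second paragraph of the functional-calculus power $n^{-\partial_c}=e^{-(\log n)\partial_c}$ with the concrete translation operator $f\mapsto f(\cdot-\log n)$ — that is, verifying that the spectral exponential of $\partial_c$ really is the translation group on $\mathbb{H}_c$, equivalently that $\partial_c$ is its infinitesimal generator (on a core such as $C_c^\infty(\mathbb{R})$) and that the weight $e^{-2ct}$ presents no obstruction. Once this semigroup identification is in hand — it is precisely the content of the preparatory results on $\partial_c$ in \cite{HerLa1} — everything else reduces to bookkeeping with absolutely convergent series together with the contractivity of the functional calculus with respect to the supremum norm over $\sigma(\partial_c)$.
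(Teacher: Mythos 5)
Your proof is correct. Note that the paper does not actually reproduce a proof of Theorem \ref{Thm:11} (it is quoted from \cite{HerLa1}); what you wrote is precisely the rigorous implementation of the route the paper sketches heuristically in \S 3.2 together with Proposition \ref{prop:4.5}: identify $n^{-\partial_{c}}=e^{-(\log n)\partial_{c}}$ with translation by $\log n$ (your unitary conjugation $f\mapsto e^{-ct}f$ onto $L^{2}(\mathbb{R},dt)$ followed by the Fourier transform, turning $\partial_{c}$ into multiplication by $c+i\xi$, is exactly the standard justification of this semigroup identification and of $\sigma(\partial_{c})=\{\lambda\in\mathbb{C}:\,Re(\lambda)=c\}$), then use $c>1$ so that $\sum_{n}\|n^{-\partial_{c}}\|=\zeta(c)<\infty$ and the contractivity of the functional calculus gives norm convergence of $\sum_{n}n^{-\partial_{c}}$ to $\zeta(\partial_{c})$, with the monotone convergence argument upgrading the conclusion to almost everywhere convergence. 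So this is essentially the intended approach, carried out correctly.
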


\begin{remark}
In other words, \emph{Theorem \ref{Thm:11}} justifies the `heuristic' representation of the spectral operator given above in Equation $($\ref{Eq:Spr}$)$.\,Indeed, it states that for $c>1$, we have
\begin{equation}
\mathfrak{a}_{c}=\zeta(\partial_{c})=\sum_{n=1}^{\infty}n^{-\partial_{c}},\label{Eq:2.23}
\end{equation}
where the equality holds in $\mathcal{B}(\mathbb{H}_{c})$, the space of bounded linear operators on $\mathbb{H}_{c}$.
\end{remark}

\begin{remark}
In addition, it is shown in \cite{HerLa1} that for any $c>0$, and for all $f$ in a suitable dense subspace of $D(\mathfrak{a}_{c})$ $($and hence, of $\mathbb{H}_{c}$$)$, an appropriate `analytic continuation' of Equation $($\ref{Eq:2.23}$)$ continues to hold $($when applied to $f$$)$.   
\end{remark}

\hspace*{3mm}A detailed study of the invertibility (and also, of the \emph{quasi-invertibility}\footnote{The spectral operator $\mathfrak{a}_{c}$ is said to be \emph{quasi-invertible} if its truncation $\mathfrak{a}^{(T)}$ is invertible for every $T>0$.\,See \cite{HerLa1, HerLa2} for a more detailed discussion of the quasi-invertibility of $\mathfrak{a}_{c}$, along with \S4.2 for an explanation of how the truncated spectral operators $\mathfrak{a}^{(T)}$, where $T\geq 0$, are defined.}) of the spectral operator $\mathfrak{a}_{c}$ is provided in \cite{HerLa1}.\,It was also surveyed in the papers \cite{HerLa2, HerLa3}.\,In particular, in that study, using the functional calculus along with the spectral mapping theorem for unbounded normal operators (the continuous version when $c\ne1$ and the meromorphic version, when $c=1$), a precise description of the spectrum $\sigma(\mathfrak{a}_{c})$  of the spectral operator is obtained in \cite{HerLa1}.\,More explicitly, we show that  $\sigma(\mathfrak{a}_{c})$ is equal to the closure of the range of the Riemann zeta function on the vertical line $L_{c}=\{\lambda\in\mathbb{C}:\,Re(\lambda)=c\}$:
\begin{theorem}\emph{\cite{HerLa1}}\,Assume that $c\geq0$.\,Then \label{Thm:ssop}
\begin{equation}\label{Eq:Merv}
\sigma(\mathfrak{a})=\overline{\zeta(\sigma(\partial))}=cl\big(\zeta(\{\lambda\in\mathbb{C}:\,Re(\lambda)=c\})\big),
\end{equation}
where $\sigma(\mathfrak{a})$\,is the spectrum of $\mathfrak{a}=\mathfrak{a}_{c}$ and $\overline{N}=cl(N)$ is the closure of $N\subset \mathbb{C}$.
\end{theorem}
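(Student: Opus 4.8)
The strategy is to read off $\sigma(\mathfrak{a}_c)$ from the spectrum of the infinitesimal shift $\partial_c$ by applying the spectral mapping theorem for (unbounded) normal operators to the identity $\mathfrak{a}_c=\zeta(\partial_c)$ of Equation (\ref{RSpop}). First I would pin down $\sigma(\partial_c)$. The unitary map $U\colon\mathbb{H}_c\to L^2(\mathbb{R},dt)$ given by $(Uf)(t):=e^{-ct}f(t)$ conjugates $\partial_c$ into $\frac{d}{dt}+c$ on $L^2(\mathbb{R},dt)$, which the Fourier transform in turn conjugates into the (normal) operator of multiplication by the function $\xi\mapsto c+i\xi$ on $L^2(\mathbb{R},d\xi)$. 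Hence $\partial_c$ is an unbounded normal operator (as recalled in Theorem~\ref{Thm:normpartial}) with
\[
\sigma(\partial_c)=\{\lambda\in\mathbb{C}:\,Re(\lambda)=c\}=:L_c,
\]
and, moreover, its spectral measure $E_{\partial_c}$ is non-atomic: $\partial_c$ has no eigenvalues, since no exponential $t\mapsto e^{\lambda t}$ lies in $L^2(\mathbb{R},dt)$, so that $E_{\partial_c}(\{\lambda\})=0$ for every $\lambda\in\mathbb{C}$.

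Next, since $\mathfrak{a}_c=\zeta(\partial_c)$ is defined through the measurable functional calculus for the normal operator $\partial_c$, I would split into two cases according to whether the line $L_c$ meets the pole of $\zeta$. If $c\neq 1$, then $\zeta$ is holomorphic — in particular continuous and finite — on an open neighborhood of $L_c=\sigma(\partial_c)$, and the spectral mapping theorem for continuous functions of unbounded normal operators yields
\[
\sigma(\mathfrak{a}_c)=\sigma(\zeta(\partial_c))=\overline{\zeta(\sigma(\partial_c))}=cl\big(\zeta(L_c)\big),
\]
the closure being genuinely necessary because $\sigma(\partial_c)$ is non-compact. (When $c>1$ one may alternatively use the bounded normal functional calculus directly, as $\mathfrak{a}_c$ is then bounded by Theorem~\ref{Thm:11}.) If $c=1$, the unique, simple pole of $\zeta$ at $s=1$ lies on $L_1=\sigma(\partial_1)$; but $\{1\}$ is $E_{\partial_1}$-null by the previous step, so $\zeta$ is finite $E_{\partial_1}$-almost everywhere on $L_1$ and $\zeta(\partial_1)$ is still a densely defined unbounded normal operator on $\mathbb{H}_1$. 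Invoking the meromorphic version of the spectral mapping theorem — legitimate precisely because the pole set of $\zeta$ on $L_1$ is $E_{\partial_1}$-negligible — one gets
\[
\sigma(\mathfrak{a}_1)=cl\big(\zeta(L_1\setminus\{1\})\big)=cl\big(\zeta(\{\lambda\in\mathbb{C}:\,Re(\lambda)=1\})\big),
\]
with $\zeta$ on the right understood to be evaluated away from its pole. Combining the two cases proves the theorem for all $c\geq 0$.

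The main obstacle I anticipate is the borderline case $c=1$, where the pole of $\zeta$ lands exactly on the spectral line $\sigma(\partial_1)$: here the continuous functional calculus is unavailable, and one must instead set up the meromorphic functional calculus together with its spectral mapping theorem, the entire argument hinging on the (elementary but essential) fact that $\partial_1$ has no point spectrum, so that the spectral measure simply ignores the pole. A secondary point that should be spelled out carefully is the identification of the functional-calculus operator $\zeta(\partial_c)$ with the spectral operator $\mathfrak{a}_c$ of Equation (\ref{Eq:spop}) — this is the content of Theorem~\ref{Thm:11} and the remarks surrounding it — as well as the observation that the closure in $\overline{\zeta(\sigma(\partial))}$ cannot be dropped in general, precisely because $\sigma(\partial_c)=L_c$ is unbounded.
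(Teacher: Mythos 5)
Your proposal is correct and follows essentially the same route as the paper: it combines $\sigma(\partial_{c})=\{Re(\lambda)=c\}$ (Theorem \ref{Thm:spectrum(partial)}, which you rederive via unitary equivalence with multiplication by $c+i\xi$) with the spectral mapping theorem for unbounded normal operators applied to $\mathfrak{a}_{c}=\zeta(\partial_{c})$, using the continuous version for $c\ne 1$ and the meromorphic version (pole at $s=1$ excluded, as in Remark \ref{Rk4.2}) for $c=1$. This is precisely the argument the paper attributes to \cite{HerLa1} and illustrates in its proof of the truncated analogue, Theorem \ref{Thm:mfrb}.
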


In \cite{HerLa1} (see also \cite{HerLa3}), a spectral reformulation of the Riemann hypothesis is obtained, further extending from an operator-theoretic point of view the earlier reformulation of RH obtained by the second author and H.\,Maier in their study of the inverse spectral problem for fractal strings (see \cite{LaMa1, LaMa2}), in relation to answering the question (\`a la Mark Kac \cite{Kac}, but interpreted in a very different sense)
\begin{quotation}
\hspace*{14mm}\textquotedblleft Can one hear the shape of a fractal string?\textquotedblright.
\end{quotation} 

\begin{theorem}\emph{\cite{HerLa1}}\label{Thm:qual}
The spectral operator $\mathfrak{a}=\mathfrak{a}_{c}$ is quasi-invertible for all $c\in(0,1)-\frac{1}{2}$ $($or equivalently, for all $c\in (\frac{1}{2},1)$$)$ if and only if the Riemann hypothesis is true.
\end{theorem}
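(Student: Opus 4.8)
The plan is to reduce the quasi-invertibility of $\mathfrak{a}_c$ to a statement about the non-vanishing of $\zeta$ on vertical lines, in exactly the spirit of the description of $\sigma(\mathfrak{a}_c)$ recalled in Theorem~\ref{Thm:ssop}, but now carried out at the level of the \emph{truncated} operators $\partial_c^{(T)}$ (see Equation~(\ref{Eq:truncshif})) and $\mathfrak{a}_c^{(T)}=\zeta(\partial_c^{(T)})$ (see Equation~(\ref{Eq:ttsp})). First I would record that, by the spectral analysis of $\partial_c$ carried out in \cite{HerLa1} (and reviewed in \S4; see in particular Theorem~\ref{Thm:normpartial}), $\partial_c$ is normal with purely continuous spectrum $\sigma(\partial_c)=L_c=\{\lambda\in\mathbb{C}:\,Re(\lambda)=c\}$, and that its truncation $\partial_c^{(T)}$ — obtained by cutting the ``imaginary part'' of $\partial_c$ down to the interval $[-T,T]$ — is a \emph{bounded} normal operator on $\mathbb{H}_c$ with compact spectrum $\sigma(\partial_c^{(T)})=\{c+i\tau:\,|\tau|\le T\}$, the vertical segment of $L_c$ of half-height $T$. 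Since here $c\in(\frac{1}{2},1)$, this segment stays away from the single pole $s=1$ of $\zeta$, so $\zeta$ is holomorphic (in particular, continuous) on a neighborhood of $\sigma(\partial_c^{(T)})$; hence $\mathfrak{a}_c^{(T)}=\zeta(\partial_c^{(T)})$ is a well-defined \emph{bounded} normal operator, obtained through the continuous functional calculus for bounded normal operators.

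Next I would invoke the spectral mapping theorem for the continuous functional calculus of bounded normal operators to get
\[
\sigma\big(\mathfrak{a}_c^{(T)}\big)=\zeta\big(\sigma(\partial_c^{(T)})\big)=\big\{\zeta(c+i\tau):\,|\tau|\le T\big\},
\]
where no closure is needed because the segment is compact and $\zeta$ is continuous. A bounded operator is invertible in $\mathcal{B}(\mathbb{H}_c)$ precisely when $0$ does not lie in its spectrum; therefore $\mathfrak{a}_c^{(T)}$ is invertible if and only if $\zeta(c+i\tau)\ne 0$ for all $\tau\in[-T,T]$. Letting $T\to\infty$, the quasi-invertibility of $\mathfrak{a}_c$ (i.e., invertibility of $\mathfrak{a}_c^{(T)}$ for \emph{every} $T>0$) is thus equivalent to the assertion that $\zeta$ has no zero on the entire vertical line $L_c$.

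Finally I would let $c$ range over the interval $(\frac{1}{2},1)$: the operator $\mathfrak{a}_c$ is quasi-invertible for every $c\in(\frac{1}{2},1)$ if and only if $\zeta$ is zero-free on $\bigcup_{c\in(1/2,1)}L_c=\{\frac{1}{2}<Re(s)<1\}$, i.e., on the open right half of the critical strip. By the classical zero-free regions $\{Re(s)\ge 1\}$ and $\{Re(s)\le 0\}$ (so that the nontrivial zeros lie in the \emph{open} critical strip $\{0<Re(s)<1\}$), this is the same as saying that $\zeta$ has no zero with $Re(s)>\frac{1}{2}$, which is precisely the Riemann hypothesis. The functional equation (Equations~(\ref{Eq:fE})--(\ref{Eq:CZ})), whose gamma and power factors do not vanish on $\{0<Re(s)<1\}$, gives the symmetry of the critical zeros about $Re(s)=\frac{1}{2}$, and hence also the equivalence, stated in the theorem, between quantifying over $c\in(0,1)\setminus\{\frac{1}{2}\}$ and over $c\in(\frac{1}{2},1)$. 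Spelling out the two implications: if RH holds then $\zeta$ is zero-free on each such $L_c$, so $0\notin\sigma(\mathfrak{a}_c^{(T)})$ for all $T>0$ and $\mathfrak{a}_c$ is quasi-invertible; conversely, if RH fails, the symmetry produces a zero $\rho=\beta+i\gamma$ with $\beta\in(\frac{1}{2},1)$, and then, taking $c=\beta$ and any $T\ge|\gamma|$, we get $0\in\sigma(\mathfrak{a}_c^{(T)})$, so $\mathfrak{a}_c^{(T)}$ is not invertible and $\mathfrak{a}_c$ is not quasi-invertible.

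The main obstacle — and the only place where genuine work is required — is the first paragraph: proving that $\partial_c^{(T)}$ is a bounded normal operator whose spectrum is \emph{exactly} the closed vertical segment $\{c+i\tau:\,|\tau|\le T\}$ (endpoints included), and that $\mathfrak{a}_c^{(T)}=\zeta(\partial_c^{(T)})$ is legitimately its image under the continuous functional calculus, so that the spectral mapping theorem applies verbatim. This rests on the detailed spectral-theoretic description of $\partial_c$ in \cite{HerLa1} (normality, absence of point spectrum, identification of $\sigma(\partial_c)$ with $L_c$, and an explicit handle on its spectral resolution via the Fourier-type transform conjugating $\partial_c$ to multiplication by $c+i\xi$), together with the fact that spectral truncation of a self-adjoint operator with purely continuous spectrum equal to $\mathbb{R}$ produces one with spectrum exactly $[-T,T]$; once this is in place, the remainder is a short spectral-mapping argument combined with elementary facts from the classical theory of $\zeta(s)$. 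Note finally that the restriction $c\ne 1$ is precisely what keeps us away from the pole of $\zeta$, so that the \emph{continuous} (rather than meromorphic) functional calculus is all that is needed here.
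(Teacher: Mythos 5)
Your proposal is correct and takes essentially the same route the paper intends: the theorem is quoted from \cite{HerLa1}, but the ingredients recorded in the survey --- Theorem \ref{Thm:Truncv} for $\sigma(\partial_{c}^{(T)})=c+i[-T,T]$, Theorem \ref{Thm:mfrb} with the spectral mapping theorem for $\sigma(\mathfrak{a}_{c}^{(T)})$, invertibility of a bounded normal operator being equivalent to $0\notin\sigma$, and the functional-equation symmetry of the critical zeros --- are exactly the ones you assemble to equate quasi-invertibility for all $c\in(\frac{1}{2},1)$ with the non-vanishing of $\zeta$ on the right half of the critical strip, i.e., with RH. The only cosmetic discrepancy is that Equation (\ref{Spcmfrk}) in the paper carries the restriction $\tau\ne 0$, which is immaterial for your argument since $\zeta(c)\ne 0$ for real $c\in(\frac{1}{2},1)$.
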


\begin{remark}
The above theorem also enables us to give a proper formulation of $[$\emph{\textbf{La-vF4}}, Corollary 9.6$]$ in terms of the quasi-invertibility of the spectral operator $\mathfrak{a}_{c}$, as interpreted in \cite{HerLa1, HerLa2, HerLa3}.
\end{remark}

\hspace*{3mm}In the next subsection, we will discuss some of the fundamental properties (obtained in \cite{HerLa1, HerLa2}) of the infinitesimal shift $\partial=\partial_{c}$ and the associated strongly continuous group $\{e^{-t\partial}\}_{t\geq 0}$.

\subsection{Properties of the infinitesimal shifts $\partial_{c}$}
The infinitesimal shift $\partial=\partial_{c}$, with $c\geq 0$ (along with their truncations $\partial^{(T)}=\partial_{c}^{(T)}$, $T\geq0$, to be discussed in \S4.1) are the basic building blocks of the theory developed in \cite{HerLa1, HerLa2, HerLa3, HerLa4} as well as in the present paper, as we shall see, in particular, in \S5.

\begin{theorem}\emph{\cite{HerLa1}}\label{Thm:normpartial}
The infinitesimal shift $\partial=\partial_{c}$ is an unbounded normal linear operator on $\mathbb{H}_{c}$.\,Moreover, its adjoint $A^{*}$ is given by
\begin{equation}
\partial^{*}=2c-\partial, \mbox{\quad \emph{with}\quad} D(\partial^{*})=D(\partial).\label{Eq:dp}
\end{equation}
\end{theorem}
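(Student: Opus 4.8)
The plan is to verify the three assertions in turn: (i) $\partial_c$ is densely defined and closed, hence $\partial_c^{**}=\partial_c$; (ii) the adjoint is computed explicitly as $\partial_c^*=2c-\partial_c$ with $D(\partial_c^*)=D(\partial_c)$; and (iii) normality, $\partial_c\partial_c^*=\partial_c^*\partial_c$, then follows formally from (ii). The most natural route exploits the fact that $\mathbb{H}_c=L^2(\mathbb{R},e^{-2ct}\,dt)$ is unitarily equivalent, via the multiplication map $(Uf)(t):=e^{-ct}f(t)$, to the unweighted space $L^2(\mathbb{R},dt)$; under this unitary, differentiation is conjugated to $U\partial_c U^{-1}=\frac{d}{dt}+c$ on $L^2(\mathbb{R})$. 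Thus everything reduces to the classical facts about the generator $\frac{d}{dt}$ of the translation group on $L^2(\mathbb{R})$, which is skew-adjoint: $(\frac{d}{dt})^*=-\frac{d}{dt}$ with the Sobolev space $H^1(\mathbb{R})$ as common domain.

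First I would record the translation-to-unweighted-space reduction: show that $U$ maps $D(\partial_c)$ (as defined in Equation~(\ref{Eq:acf})) bijectively onto $H^1(\mathbb{R})=D(\frac{d}{dt})$, using the product rule $e^{-ct}(f'-cf)=(e^{-ct}f)'$ together with the boundary-decay estimate $|f(t)|e^{-ct}\to 0$ recorded after Equation~(\ref{SPopBdcond}) — this last point is what guarantees that locally absolutely continuous $f$ with $f,f'\in\mathbb{H}_c$ actually land in $H^1(\mathbb{R})$ rather than merely $H^1_{\mathrm{loc}}$. Once this identification is in place, closedness of $\partial_c$ is inherited from closedness of $\frac{d}{dt}$ (a closed operator stays closed under conjugation by a unitary plus a bounded perturbation), and density of $D(\partial_c)$ follows since $C_c^\infty(\mathbb{R})\subset D(\partial_c)$ and $C_c^\infty$ is dense in $\mathbb{H}_c$.

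Next I would compute the adjoint. Conjugation by $U$ gives $\partial_c^* = U^{-1}\big(\tfrac{d}{dt}+c\big)^* U = U^{-1}\big(-\tfrac{d}{dt}+c\big)U$ using $(\frac{d}{dt})^*=-\frac{d}{dt}$ on $L^2(\mathbb{R})$ and the self-adjointness of the bounded real scalar $c$. Unwinding the conjugation, $U^{-1}(-\frac{d}{dt}+c)U$ acts on $f$ by $-(f'-cf)+cf = -f'+2cf = (2c-\partial_c)(f)$, and its domain is $U^{-1}H^1(\mathbb{R}) = D(\partial_c)$, which is exactly Equation~(\ref{Eq:dp}). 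Alternatively — and this is the version I would actually write out in detail if I wanted to avoid leaning on the classical $L^2(\mathbb{R})$ computation — one argues directly by integration by parts in $\mathbb{H}_c$: for $f\in D(\partial_c)$ and $g$ in the putative domain, $\langle \partial_c f,g\rangle_c=\int f'\,\overline{g}\,e^{-2ct}\,dt = -\int f\,\overline{(g'-2cg)}\,e^{-2ct}\,dt = \langle f,(2c-\partial_c)g\rangle_c$, the boundary terms at $\pm\infty$ vanishing precisely because of~(\ref{SPopBdcond}); the reverse inclusion $D(\partial_c^*)\subseteq D(\partial_c)$ is the part that genuinely needs the unitary-equivalence reduction (or an equivalent regularity argument showing the distributional derivative of any $g\in D(\partial_c^*)$ lies in $\mathbb{H}_c$).

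Finally, normality is immediate once the adjoint is identified: since $\partial_c^*=2cI-\partial_c$ is a scalar multiple of the identity minus $\partial_c$, it commutes with $\partial_c$ on $D(\partial_c^*\partial_c)=D(\partial_c^*\partial_c)$, and one checks $\partial_c\partial_c^* = \partial_c(2c-\partial_c)=2c\partial_c-\partial_c^2 = (2c-\partial_c)\partial_c=\partial_c^*\partial_c$, with domains on both sides equal to $D(\partial_c^2)$; being closed, densely defined, and commuting with its adjoint in this sense, $\partial_c$ is normal. I expect the main obstacle to be the clean justification of the domain equality $D(\partial_c^*)=D(\partial_c)$ — specifically, ruling out that $D(\partial_c^*)$ could be strictly larger — and making the boundary terms in the integration by parts rigorous; the decay condition~(\ref{SPopBdcond}) is the key input there, and the cleanest packaging is the unitary conjugation to the translation group on $L^2(\mathbb{R})$, which lets one cite the standard skew-adjointness of $d/dt$ rather than reproving it.
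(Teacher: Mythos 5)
Your argument is correct and is essentially the intended one: the paper itself states Theorem \ref{Thm:normpartial} without proof (it is quoted from \cite{HerLa1}), but the decomposition $\partial_{c}=c+iV$ with $V$ self-adjoint recorded in Corollary \ref{Cor:Acv} is exactly what your unitary conjugation $f\mapsto e^{-ct}f$ from $\mathbb{H}_{c}$ onto $L^{2}(\mathbb{R},dt)$ produces, with $V$ unitarily equivalent to $-i\,\frac{d}{dt}$ and the skew-adjointness of $\frac{d}{dt}$ (equivalently, Stone's theorem for the translation group) doing all the work, including the crucial reverse inclusion $D(\partial_{c}^{*})\subseteq D(\partial_{c})$ that the naive integration-by-parts argument alone does not give. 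Two small points to tidy up: the decay condition (\ref{SPopBdcond}) is not actually needed to show that $e^{-ct}f\in H^{1}(\mathbb{R})$ for $f\in D(\partial_{c})$ --- for locally absolutely continuous $f$ the distributional and pointwise derivatives of $e^{-ct}f$ coincide, and both $e^{-ct}f$ and $e^{-ct}(f'-cf)$ lie in $L^{2}(\mathbb{R})$ by hypothesis, so the decay is a consequence (via $H^{1}(\mathbb{R})\subset C_{0}(\mathbb{R})$) rather than an input, though it is legitimately used for the boundary terms in your alternative direct computation; and the statement also asserts that $\partial_{c}$ is unbounded, which deserves the one-line remark that it too follows from the unitary equivalence with $c+\frac{d}{dt}$ on $L^{2}(\mathbb{R})$.
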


\begin{remark}\label{Cor:Resspect}
The residual spectrum, $\sigma_{r}(A)$, of a normal $($possibly unboun-\\ded$)$ linear operator $A:D(A)\subset\mathcal{H}\to\mathcal{H}$, where $D(A)$ is the domain of $A$ and $\mathcal{H}$ is some complex Hilbert space, is empty.\,Hence, the essential spectrum of $A$, $\sigma_{e}(A)$, which consists of all the \emph{approximate eigenvalues} of $A$,\footnote{Recall that $\lambda\in\mathbb{C}$ is called an \emph{approximate eigenvalue} of $A$ if there exists a sequence of unit vectors $\{\psi_{n}\}_{n=1}^{\infty}$ of $\mathcal{H}$ such that $(A-\lambda)\psi_{n}\to 0$ as $n\to \infty$.\,(See, e.g., \cite{Sc}.)} is equal to the entire spectrum of $A$, denoted by $\sigma(A)$.
\end{remark}

\begin{remark}
References on the spectral theory $($and the associated functional calculus$)$ of unbounded linear operators, with various degrees of generality and emphasis on the applications of the theory, include \cite{DunSch, Kat, Ru, ReSi, Sc, JoLa, Ha}.\,In particular, the case of unbounded normal operators $($which is of most direct interest here$)$ is treated in Rudin's book \cite{Ru}.
\end{remark}

\begin{theorem}\emph{\cite{HerLa1}}\label{Thm:spectrum(partial)}
The spectrum, $\sigma(\partial)$, of the differentiation operator $($or infinitesimal shift$)$ $\partial=\partial_{c}$ is equal to the closed vertical line of the complex plane passing through $c\geq 0$; furthermore, it coincides with the essential spectrum of $\partial$$:$
\begin{equation}
\sigma(\partial)=\sigma_{e}(\partial)=\{\,\lambda\in \mathbb{C}:\,Re(\lambda)=c\,\},\label{Thm:spect}
\end{equation}
where $\sigma_{e}(\partial)$ consists of all the approximate eigenvalues of $\partial$.\,Moreover, the point spectrum of $\partial$ is empty $($i.e., $\partial$ does not have any eigenvalues$)$, so that the spectrum of $\partial$ is purely continuous. 
\end{theorem}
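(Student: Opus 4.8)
The plan is to compute $\sigma(\partial_c)$ by passing through a unitary transformation that turns $\partial_c$ into a concrete multiplication-type operator, and then to separately rule out any point spectrum. First I would exploit the weight built into $\mathbb{H}_c = L^2(\mathbb{R}, e^{-2ct}\,dt)$: the map $U\colon \mathbb{H}_c \to L^2(\mathbb{R}, dt)$ defined by $(Uf)(t) = e^{-ct} f(t)$ is a unitary isomorphism, and a direct computation on $D(\partial_c)$ (using the product rule, which is legitimate since elements of $D(\partial_c)$ are locally absolutely continuous) shows that $U \partial_c U^{-1} = \frac{d}{dt} + c$ on $L^2(\mathbb{R}, dt)$, with the natural Sobolev-type domain $H^1(\mathbb{R})$. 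Since the spectrum is a unitary invariant, it suffices to identify the spectrum of $\frac{d}{dt} + c$ on $L^2(\mathbb{R})$, i.e.\ of $\frac{d}{dt}$ shifted by the real constant $c$.

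Next I would compute $\sigma\bigl(\tfrac{d}{dt}\bigr)$ on $L^2(\mathbb{R}, dt)$ using the Fourier transform $\mathcal{F}$, which is unitary on $L^2(\mathbb{R})$ and conjugates $\frac{d}{dt}$ into multiplication by $i\xi$ (with domain $\{g : \xi g \in L^2\}$). The spectrum of the multiplication operator $M_{i\xi}$ is the essential range of the symbol $\xi \mapsto i\xi$, which is the imaginary axis $i\mathbb{R}$; moreover a multiplication operator by a non-constant continuous function has empty point spectrum, since the preimage of any single value is Lebesgue-null, so no nonzero $L^2$ function can be an eigenvector. Translating back, $\sigma(\tfrac{d}{dt} + c) = c + i\mathbb{R} = \{\lambda \in \mathbb{C} : \mathrm{Re}(\lambda) = c\}$ with empty point spectrum, and hence the same holds for $\partial_c$. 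Since $\partial_c$ is normal (Theorem \ref{Thm:normpartial}), its residual spectrum is empty (Remark \ref{Cor:Resspect}), so $\sigma(\partial_c)$ is entirely continuous spectrum, and it coincides with $\sigma_e(\partial_c)$, the set of approximate eigenvalues — indeed, for a normal operator every spectral value is an approximate eigenvalue.

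Alternatively, and perhaps more in the spirit of the semigroup picture used elsewhere in the paper, one can produce the approximate eigenvectors explicitly: for $\lambda = c + i\omega$ one regularizes the formal eigenfunction $e^{\lambda t}$ (which in $\mathbb{H}_c$ corresponds to $e^{i\omega t}$ after the unitary $U$, a bounded non-$L^2$ function) by multiplying against a sequence of suitably normalized smooth cutoffs $\varphi_n$ supported on expanding intervals, obtaining unit vectors $\psi_n$ with $(\partial_c - \lambda)\psi_n \to 0$; this shows $\{\mathrm{Re}(\lambda) = c\} \subseteq \sigma_e(\partial_c)$ directly, and the reverse inclusion $\sigma(\partial_c) \subseteq \{\mathrm{Re}(\lambda) = c\}$ follows by exhibiting the resolvent $(\partial_c - \lambda)^{-1}$ as a bounded convolution/integral operator whenever $\mathrm{Re}(\lambda) \ne c$ (its kernel being an exponential that is integrable precisely off the line $\mathrm{Re}(\lambda)=c$). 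I expect the main technical obstacle to be the careful bookkeeping of domains — verifying that $U$ and $\mathcal{F}$ map $D(\partial_c)$ exactly onto the stated domains $H^1(\mathbb{R})$ and $\{\xi g \in L^2\}$, and that the boundary terms in the integration by parts vanish thanks to the natural boundary conditions $|f(t)|e^{-ct} \to 0$ noted in Equation (\ref{SPopBdcond}) — rather than the spectral computation itself, which is essentially the textbook spectrum of the momentum operator.
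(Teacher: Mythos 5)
Your proposal is correct. Note that the present survey states Theorem \ref{Thm:spectrum(partial)} without proof (it is quoted from \cite{HerLa1}; the only spectral computation proved in the text is Theorem \ref{Thm:mfrb} for the truncated operators), and your route --- conjugating by the unitary map $f\mapsto e^{-ct}f$ from $\mathbb{H}_{c}$ onto $L^{2}(\mathbb{R})$ to reduce $\partial_{c}$ to $c+\frac{d}{dt}$ with domain $H^{1}(\mathbb{R})$, diagonalizing via the Fourier transform to get the vertical line $c+i\mathbb{R}$ with empty point spectrum, and then invoking normality (Theorem \ref{Thm:normpartial} and Remark \ref{Cor:Resspect}) to conclude that the spectrum is purely continuous and consists entirely of approximate eigenvalues, i.e., $\sigma(\partial)=\sigma_{e}(\partial)$ in the paper's sense --- is precisely the standard argument used in the cited source, with your alternative via cut-off quasi-eigenfunctions $e^{\lambda t}\varphi_{n}$ and the explicit resolvent kernel off the line serving as an equivalent direct verification.
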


\begin{corollary}\label{Cor:sigma}
For any $c\geq 0$, we have $\sigma(\partial^{*})=\sigma(\partial)=c+i\mathbb{R}$.
\end{corollary}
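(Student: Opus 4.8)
The plan is to read off $\sigma(\partial)$ from Theorem \ref{Thm:spectrum(partial)} and then transport it to $\partial^{*}$ by one of two equivalent routes, each of which is essentially immediate once the right abstract fact is invoked.

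First I would record that, by Theorem \ref{Thm:spectrum(partial)}, $\sigma(\partial)=\{\lambda\in\mathbb{C}:\,Re(\lambda)=c\}=c+i\mathbb{R}$. The only remaining content is the identification of $\sigma(\partial^{*})$. Route one: use the general spectral duality for closed, densely defined operators on a Hilbert space, namely that $\lambda\in\rho(T)$ if and only if $\bar{\lambda}\in\rho(T^{*})$ (since $(T-\lambda)^{*}=T^{*}-\bar{\lambda}$ and a closed operator is boundedly invertible precisely when its adjoint is), so that $\sigma(T^{*})=\{\bar{\lambda}:\lambda\in\sigma(T)\}$. Applying this with $T=\partial$ (which is closed and densely defined, indeed normal, by Theorem \ref{Thm:normpartial}) gives $\sigma(\partial^{*})=\overline{\sigma(\partial)}=\overline{c+i\mathbb{R}}=c-i\mathbb{R}$, and since the set $c+i\mathbb{R}$ is invariant under complex conjugation, this equals $c+i\mathbb{R}=\sigma(\partial)$. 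Route two, which I would mention as an alternative (or use instead, to keep the argument self-contained within the paper): invoke the explicit formula $\partial^{*}=2c-\partial$ with $D(\partial^{*})=D(\partial)$ from Theorem \ref{Thm:normpartial}, together with the affine spectral mapping property $\sigma(\alpha I-\partial)=\alpha-\sigma(\partial)$ valid for any closed operator and any scalar $\alpha$ (here $\alpha=2c$); this yields $\sigma(\partial^{*})=2c-\sigma(\partial)=2c-(c+i\mathbb{R})=c-i\mathbb{R}=c+i\mathbb{R}$.

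Either way one concludes $\sigma(\partial^{*})=\sigma(\partial)=c+i\mathbb{R}$, as claimed. There is no real obstacle here: the statement is a formal corollary of Theorems \ref{Thm:normpartial} and \ref{Thm:spectrum(partial)}. The only point requiring a word of care is that the adjoint/affine spectral mapping facts being used are the versions for unbounded (but closed and densely defined) operators rather than the bounded versions; these hold without change in the Hilbert space setting and apply here because $\partial$ is normal, hence closed. I would therefore keep the proof to two or three lines, citing Theorem \ref{Thm:spectrum(partial)} for $\sigma(\partial)$, Theorem \ref{Thm:normpartial} for $\partial^{*}=2c-\partial$, and a standard reference (e.g.\ \cite{Ru} or \cite{ReSi}) for the elementary spectral manipulation.
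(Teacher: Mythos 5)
Your argument is correct and is exactly the intended derivation: the paper states this corollary without proof as an immediate consequence of Theorem \ref{Thm:spectrum(partial)} (giving $\sigma(\partial)=c+i\mathbb{R}$) and Theorem \ref{Thm:normpartial} (giving $\partial^{*}=2c-\partial$ with $D(\partial^{*})=D(\partial)$, or equivalently the conjugation symmetry $\sigma(\partial^{*})=\{\bar{\lambda}:\lambda\in\sigma(\partial)\}$ for closed densely defined operators), together with the observation that $c+i\mathbb{R}$ is invariant under reflection. Either of your two routes is fine, and your remark that the unbounded-operator versions of these spectral facts are what is needed is the only point of care.
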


\hspace*{3mm}In light of Corollary \ref{Cor:sigma}, the following result is really a consequence of Theorem \ref{Thm:normpartial} and will be very useful to us in the sequel (see \S4 and \S5):

\begin{corollary}\label{Cor:Acv}
For any $c\geq0$, we can write 
\begin{equation}
\partial=c+iV \mbox{\quad \emph{and}\quad}\,\partial^{*}=c-iV,
\end{equation}
where $c=cI=Re(\partial)$ $($a constant multiple of the identity operator on $D(\partial)\subset\mathbb{H}_{c}$$)$ and $V=Im(\partial)$, an unbounded self-adjoint operator on $\mathbb{H}_{c}$, with domain $D(V)=D(\partial)=D(\partial^{*})$.\,Moreover, the spectrum of $V$ is given by $\sigma(V)=\mathbb{R}$.
\end{corollary}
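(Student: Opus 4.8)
The plan is to derive everything from Theorem \ref{Thm:normpartial} (which gives $\partial^* = 2c - \partial$) together with Corollary \ref{Cor:sigma} (which gives $\sigma(\partial) = \sigma(\partial^*) = c + i\mathbb{R}$). First I would define $V := (\partial - cI)/i = -i(\partial - cI)$, so that by construction $\partial = c + iV$. The task is then to check that $V$ is a well-defined unbounded self-adjoint operator with the claimed domain and spectrum, and that the adjoint formula $\partial^* = c - iV$ follows.

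The key steps, in order, are as follows. (1) \emph{Domain bookkeeping}: since $cI$ is a bounded operator (a scalar multiple of the identity), $D(V) = D(\partial - cI) = D(\partial)$, and likewise $D(\partial^*) = D(2c - \partial) = D(\partial)$ by Theorem \ref{Thm:normpartial}; hence $D(V) = D(\partial) = D(\partial^*)$. (2) \emph{Self-adjointness of $V$}: compute $V^* = (-i(\partial - cI))^* = \overline{(-i)}\,(\partial^* - cI) = i(\partial^* - cI)$. Now substitute $\partial^* = 2c - \partial$ from Theorem \ref{Thm:normpartial}: $V^* = i(2c - \partial - cI) = i(cI - \partial) = -i(\partial - cI) = V$. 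Since $\partial$ is closed (being normal) and $cI$ is bounded, $V$ is closed, so $V = V^*$ genuinely means $V$ is self-adjoint, not merely symmetric. (3) \emph{The adjoint identity}: from $\partial = c + iV$ we get $\partial^* = c - iV$ directly, either by taking adjoints in $\partial = cI + iV$ (using that $cI$ is bounded self-adjoint and $V$ is self-adjoint) or by rewriting $\partial^* = 2c - \partial = 2c - (c + iV) = c - iV$; consistency of the two routes is a sanity check, not an obstacle. (4) \emph{Spectrum of $V$}: apply the spectral mapping theorem in the trivial affine form $\sigma(\partial) = \sigma(c + iV) = c + i\,\sigma(V)$, so $\sigma(V) = (\sigma(\partial) - c)/i = (c + i\mathbb{R} - c)/i = \mathbb{R}$, using Theorem \ref{Thm:spectrum(partial)} or Corollary \ref{Cor:sigma}.

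Most of this is routine once the decomposition $V = -i(\partial - cI)$ is written down; the only point requiring any care is step (2), and there the subtlety is purely that one must invoke the \emph{normality} (hence closedness) of $\partial$ to upgrade the formal identity $V^* = V$ to genuine self-adjointness — a symmetric operator need not be self-adjoint, so the domain equality $D(V^*) = D(V) = D(\partial)$ established in step (1), combined with $V^* = V$ as operators on that common domain, is what seals the argument. I expect the main (and only mild) obstacle to be making the adjoint computation $(-i A)^* = i A^*$ and the interaction with the bounded perturbation $cI$ rigorous at the level of unbounded operators; this is standard (see, e.g., \cite{Ru}), so no real difficulty arises. The identity $\mathrm{Re}(\partial) = cI$ and $\mathrm{Im}(\partial) = V$ is then just the statement that $\partial = \tfrac12(\partial + \partial^*) + \tfrac{1}{2i}\cdot i(\partial - \partial^*)$ recovers $cI$ and $V$ respectively, which follows by plugging in $\partial^* = 2c - \partial$.
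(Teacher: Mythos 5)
Your proposal is correct and follows exactly the route the paper intends: the paper states Corollary \ref{Cor:Acv} as a direct consequence of Theorem \ref{Thm:normpartial} ($\partial^{*}=2c-\partial$ with $D(\partial^{*})=D(\partial)$) together with Corollary \ref{Cor:sigma}, and your decomposition $V=-i(\partial-cI)$, the adjoint computation $V^{*}=i(\partial^{*}-cI)=V$ with matching domains, and the affine spectral mapping giving $\sigma(V)=\mathbb{R}$ are precisely the standard filling-in of that derivation (carried out in detail in \cite{HerLa1}). No gaps; the only cosmetic point is that self-adjointness already follows from $V^{*}=V$ with equal domains, so the separate appeal to closedness is not needed.
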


\hspace*{3mm}The next result (also from \cite{HerLa1}) enables us to justify the use of the term \textquotedblleft infinitesimal shift\textquotedblright (when referring to the operator $\partial=\partial_{c}$) as well as some of the formal manipulations occurring in Equations (\ref{Eq:Tfunc}), (\ref{Eq:Spr}), (\ref{Eq:Pf}) and (\ref{Opvalprod}) of \S3.2 above.\footnote{Detailed information about the theory of semigroups of bounded linear operators can be found, e.g., in the books \cite{HiPh, EnNa, Paz, Kat, ReSi, JoLa}.}

\begin{proposition}\label{prop:4.5}
Fix $c\geq0$ and write $\partial=\partial_{c}$.\,Then, the following two properties hold\emph{:}\\

$($i$)$\,$\{e^{-t\partial}\}_{t\geq 0}$ is a strongly continuous contraction semigroup of bounded linear operators on $\mathbb{H}_{c}$ and $||e^{-t\partial}||=e^{-tc}$ for any $t\geq0$.\,Hence, its infinitesimal generator $\partial$ is an $m$-accretive operator on $\mathbb{H}_{c}$ $($in the sense of \emph{\cite{Kat, JoLa, Paz}}$)$.\\

$($ii$)$$\,\{e^{-t\partial}\}_{t\geq 0}$ is a translation $($or shift$)$ semigroup.\,That is, for every $t\geq0$, $(e^{-t\partial})(f)(u)=f(u-t)$, for all $f\in \mathbb{H}_{c}$ and $u\in \mathbb{R}$.\,$($For a fixed $t\geq 0$, this equality holds between elements of $\mathbb{H}_{c}$ and hence, for a.e. $u\in \mathbb{R}$.$)$
\end{proposition}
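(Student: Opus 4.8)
The plan is to establish both properties by directly identifying $\{e^{-t\partial}\}_{t\geq 0}$ with the left–translation semigroup on $\mathbb{H}_c$ and then reading off the quantitative estimate. First I would define, for $t\geq 0$, the operator $(T(t)f)(u):=f(u-t)$ and check that each $T(t)$ is a bounded linear operator on $\mathbb{H}_c=L^2(\mathbb{R},e^{-2cu}du)$. The norm computation is a routine change of variables: for $f\in\mathbb{H}_c$,
\begin{equation}
\|T(t)f\|_c^2=\int_{\mathbb{R}}|f(u-t)|^2 e^{-2cu}\,du=\int_{\mathbb{R}}|f(v)|^2 e^{-2c(v+t)}\,dv=e^{-2ct}\|f\|_c^2,
\end{equation}
so $\|T(t)\|=e^{-ct}\leq 1$, i.e.\ the family is a contraction semigroup and the stated norm identity $\|e^{-t\partial}\|=e^{-tc}$ holds once the identification $T(t)=e^{-t\partial}$ is justified. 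The semigroup property $T(t)T(s)=T(t+s)$ and $T(0)=I$ are immediate from the definition of translation.

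Next I would verify strong continuity: $T(t)f\to f$ in $\mathbb{H}_c$ as $t\to 0^+$ for every $f\in\mathbb{H}_c$. This follows from the standard density argument — continuity of translation in $L^2$ of compactly supported continuous functions (uniform continuity plus dominated convergence against the weight $e^{-2cu}$ on compacta), together with the uniform bound $\|T(t)\|\leq 1$ for $t\in[0,1]$, which lets one pass the limit through a dense subspace. Having a strongly continuous contraction semigroup, its infinitesimal generator $G$ is defined by $Gf=\lim_{t\to 0^+}t^{-1}(T(t)f-f)$ on the domain where this limit exists in $\mathbb{H}_c$. The key step is then to show $G=-\partial_c$ with $D(G)=D(\partial_c)$ as given in \eqref{Eq:acf}. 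For $f\in D(\partial_c)$, i.e.\ $f$ locally absolutely continuous with $f'\in\mathbb{H}_c$, one writes $f(u-t)-f(u)=-\int_0^t f'(u-r)\,dr$ and shows, using Minkowski's integral inequality and strong continuity of $T$ applied to $f'$, that $t^{-1}(T(t)f-f)\to -f'=-\partial_c f$ in $\mathbb{H}_c$; conversely, if the generator limit exists one identifies it (as a distributional derivative) with $-f'$, forcing $f\in D(\partial_c)$. Once $G=-\partial_c$, we have $e^{-t\partial_c}=T(t)$ by uniqueness of the semigroup generated by $G$ (Hille–Yosida), which simultaneously gives part (ii) and the norm statement in part (i); $m$-accretivity of $\partial_c$ is then the standard dictionary translation of "$-\partial_c$ generates a contraction semigroup" (see \cite{Kat, Paz, JoLa}).

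The main obstacle is the precise identification of the generator's domain with $D(\partial_c)$ as defined in \eqref{Eq:acf}, rather than with some a priori smaller or larger space of functions — one must be careful that the $\mathbb{H}_c$-limit defining $Gf$ really does produce an element of the weighted $L^2$ space that coincides a.e.\ with the classical (or distributional) derivative, and that no boundary terms at $\pm\infty$ intrude; here the natural boundary conditions $|f(t)|e^{-ct}\to 0$ recorded in \eqref{SPopBdcond} are exactly what is needed to control the endpoints. Everything else (linearity, the change-of-variables norm identity, the semigroup law, density-based strong continuity) is routine.
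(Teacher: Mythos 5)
Your proposal is correct, and it is the natural argument: the paper itself gives no proof of Proposition \ref{prop:4.5} (it defers to \cite{HerLa1}), and your route --- defining $(T(t)f)(u)=f(u-t)$, getting $\|T(t)f\|_{c}=e^{-ct}\|f\|_{c}$ by the change of variables $v=u-t$, strong continuity by density of $C_{c}(\mathbb{R})$ in the weighted $L^{2}$ space plus the uniform bound $\|T(t)\|\leq 1$, identifying the generator with $-\partial_{c}$ on exactly the domain (\ref{Eq:acf}) via $f(u-t)-f(u)=-\int_{0}^{t}f'(u-r)\,dr$ and Minkowski's integral inequality (with the converse inclusion handled distributionally), and then invoking uniqueness of the generated semigroup and the Lumer--Phillips/Hille--Yosida dictionary for $m$-accretivity --- is the standard proof and surely the one intended in the cited monograph.

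Two small remarks. First, your closing comment that the boundary conditions (\ref{SPopBdcond}) are ``exactly what is needed to control the endpoints'' overstates their role: no boundary terms at $\pm\infty$ actually arise in the generator identification (neither direction of the domain equality uses them), and in the paper these conditions are derived as consequences of membership in $D(\partial_{c})$, not used as hypotheses; so that sentence is harmless but dispensable. Second, if $e^{-t\partial}$ is taken to be defined by the functional calculus for the normal operator $\partial=c+iV$ (as the paper's framework suggests), you should note the standard fact that this functional-calculus exponential is a $C_{0}$-semigroup whose generator is $-\partial$, so that your identification $T(t)=e^{-t\partial}$ via uniqueness of the semigroup generated by $-\partial_{c}$ indeed closes the loop; with that observation your plan is complete.
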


\begin{remark}
An entirely analogous result holds for the semigroup $\{e^{t\partial}\}_{t\geq 0}$, except that it is then an expanding $($rather than a contraction$)$ semigroup.\,Similarly, for any $c\in\mathbb{R}$ such that $c\leq 0$, all of the results stated in \S3.3 are still valid without changes except for the fact that in Proposition \ref{prop:4.5}, the adjectives \textquotedblleft contractive\textquotedblright \,and  \textquotedblleft expanding\textquotedblright  \,must be interchanged.
\end{remark}

\section{The Truncated Spectral Operators $\mathfrak{a}_{c}^{(T)}=\zeta(\partial_{c}^{(T)})$}

\hspace*{3mm}As was alluded to above (at the beginning of \S3.3), a quantum analog of the universality of the Riemann zeta function (to be provided in \S5 below) will be expressed (in our context) in terms of the truncated infinitesimal shifts (to be defined in \S4.1 below) and also, in some sense, in terms of the truncated spectral operators (to be defined in \S4.2 below). 

\subsection{The truncated infinitesimal shifts $\partial_{c}^{(T)}$ and their properties}

\hspace*{3mm}Recall from Corollary \ref{Cor:Acv} that for any $c\geq 0$, the infinitesimal shift $\partial=\partial_{c}$ is given by
\begin{equation}
\partial=c+iV,\notag
\end{equation}
where $V:=Im(\partial)$ (the imaginary part of $\partial$) is an unbounded self-adjoint operator such that $\sigma(V)=\mathbb{R}$.\\

\hspace*{3mm}Given $T\geq 0$, we define the $T$-\emph{truncated infinitesimal shift} as follows:
\begin{equation}\label{Eq:truncshif}
\partial^{(T)}:=c+iV^{(T)},
\end{equation}
where
\begin{equation}\label{Eq:VT}
V^{(T)}:=\phi^{(T)}(V)
\end{equation}
(in the sense of the functional calculus), and $\phi^{(T)}$ is a suitable (i.e., $T$-admissible) continuous (if $c\ne1$) or meromorphic (if $c=1$) cut-off function chosen so that $\overline{\phi^{(T)}(\mathbb{R})}=c+i[-T,T]$).\\

\hspace*{3mm}The next result states that the spectrum $\sigma(\partial_{c}^{(T)})$ of the truncated infinitesimal shift is equal to the vertical line segment of height $T$ and abscissa $c$, symetrically located with respect to $c$.\\

\begin{theorem}\emph{\cite{HerLa1}}\label{Thm:Truncv}
For any $T>0$ $($and $c\geq0$$)$, the spectrum $\sigma(\partial^{(T)})$ of the truncated infinitesimal shift $\partial^{(T)}=\partial^{(T)}_{c}$ is given by 

\begin{equation}\label{Eq4.7}
\sigma(\partial^{(T)})=\{c+i\tau:\,|\tau|\leq T,\,\tau\in\mathbb{R}\}=c+i[-T,T].
\end{equation}

\hspace*{3mm}Moreover, the spectrum $\sigma(V^{(T)})$ of the imaginary part $V^{(T)}$ of the infinitesimal shift is given by 
\begin{equation}\label{Eq4.8}
\sigma(V^{(T)})=[-T,T].
\end{equation}
\end{theorem}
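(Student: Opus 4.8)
The plan is to derive both spectral identities directly from the functional calculus for the self-adjoint operator $V=\mathrm{Im}(\partial)$, using the fact (Corollary \ref{Cor:Acv}) that $\sigma(V)=\mathbb{R}$ together with the spectral mapping theorem. Since $V^{(T)}=\phi^{(T)}(V)$ is by construction a function of the self-adjoint operator $V$, it is itself self-adjoint, and for a self-adjoint operator defined via the functional calculus applied to a continuous (resp. meromorphic, when $c=1$) function $\phi^{(T)}$, the spectrum of $\phi^{(T)}(V)$ is the closure of $\phi^{(T)}(\sigma(V))=\phi^{(T)}(\mathbb{R})$. By the defining ($T$-admissibility) property of the cut-off function, $\overline{\phi^{(T)}(\mathbb{R})}=[-T,T]$ (identifying the vertical line $c+i\mathbb{R}$ with $\mathbb{R}$ in the obvious way, or rather: $\phi^{(T)}$ is chosen so its range closes up to $[-T,T]$ as a subset of $\mathbb{R}$). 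This immediately yields Equation (\ref{Eq4.8}): $\sigma(V^{(T)})=[-T,T]$.

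For Equation (\ref{Eq4.7}), I would then observe that $\partial^{(T)}=c+iV^{(T)}=cI+iV^{(T)}$ is a normal operator whose spectrum is obtained from $\sigma(V^{(T)})$ by the affine map $\lambda\mapsto c+i\lambda$. Concretely, $\partial^{(T)}-(c+i\tau)I = i(V^{(T)}-\tau I)$, so $\partial^{(T)}-(c+i\tau)I$ is invertible (with bounded inverse) if and only if $V^{(T)}-\tau I$ is, i.e. if and only if $\tau\notin\sigma(V^{(T)})=[-T,T]$. Hence $\sigma(\partial^{(T)})=\{c+i\tau:\tau\in[-T,T]\}=c+i[-T,T]$, which is the first claim. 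One should also note that $V^{(T)}$ is in fact bounded (its spectrum is the compact set $[-T,T]$ and it is self-adjoint, hence $\|V^{(T)}\|=T$), so $\partial^{(T)}$ is a bounded normal operator, which makes the spectral-mapping step entirely elementary — no unbounded-operator subtleties arise at the level of $\partial^{(T)}$ itself.

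The one point requiring care — and the main (mild) obstacle — is the case $c=1$, where $\phi^{(T)}$ is allowed to be meromorphic rather than continuous. Here I would invoke the meromorphic version of the spectral mapping theorem for (unbounded) normal operators, exactly as used elsewhere in \cite{HerLa1} in the analysis of $\sigma(\mathfrak{a}_c)$ at $c=1$ (compare the discussion preceding Theorem \ref{Thm:ssop} and Equation (\ref{Eq:Merv})): one must check that the poles of $\phi^{(T)}$ do not meet $\sigma(V)=\mathbb{R}$, or are handled by the convention that a pole lying in the spectrum contributes $\infty$ which is then excluded, so that $\sigma(\phi^{(T)}(V))=\overline{\phi^{(T)}(\sigma(V))}$ still holds with the right-hand side equal to $[-T,T]$. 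This is precisely guaranteed by the definition of $T$-admissibility of $\phi^{(T)}$. Once this is in place, the computation of $\sigma(\partial^{(T)})$ proceeds verbatim as in the $c\ne 1$ case. The remaining verifications — that $V^{(T)}$ is self-adjoint with the stated domain (all of $\mathbb{H}_c$, since it is bounded), and that the affine image formula for the spectrum holds — are routine consequences of the functional calculus and are essentially recalled in Theorems \ref{Thm:normpartial}--\ref{Thm:spectrum(partial)} for the untruncated operator.
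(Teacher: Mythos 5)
Your argument is correct and follows essentially the route the paper intends (the theorem is quoted here from \cite{HerLa1} without proof, but all the ingredients are laid out in \S 3.3--\S 4): $\sigma(V)=\mathbb{R}$ from Corollary \ref{Cor:Acv}, the spectral mapping theorem \textbf{SMT} applied to the $T$-admissible cut-off $\phi^{(T)}$ yielding $\sigma(V^{(T)})=\overline{\phi^{(T)}(\mathbb{R})}=[-T,T]$, and the affine relation $\partial^{(T)}=c+iV^{(T)}$ yielding $\sigma(\partial^{(T)})=c+i[-T,T]$. Your handling of the meromorphic case $c=1$ and your observation that $V^{(T)}$ (hence $\partial^{(T)}$) is a bounded normal operator are likewise consistent with the paper's conventions, so no gap remains.
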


\subsection{The truncated spectral operators and their spectra}

\hspace*{3mm}Next, we define our other main objects of study, the truncated spectral operators (denoted by $\mathfrak{a}^{(T)}$) which, along with the truncated infinitesimal shifts $\partial_{c}^{(T)}$ introduced in \S4.1 just above, will play a crucial role in our proposed operator-theoretic extension of the universality theorem for the Riemann zeta function (see Theorems \ref{Thm:com} and \ref{Extendoperatorval}).\,Let $c\geq 0$.\,Then, given $T\geq 0$, the $T$-\emph{truncated spectral operator} is defined as follows:

\begin{equation}\label{Eq:ttsp}
\mathfrak{a}^{(T)}:=\zeta\left(\partial^{(T)}\right).
\end{equation}

\hspace*{3mm}More precisely, in the definition of $\partial^{(T)}=c+iV^{(T)}$, with $V^{(T)}=\phi^{(T)}(V)$, as given in Equation (\ref{Eq:truncshif}) and Equation (\ref{Eq:VT}), the $T$-admissible function $\phi^{(T)}$ is chosen as follows:\\

\vspace*{3mm}
\hspace*{3mm}(i)\hspace*{2mm}If $c\ne 1$, $\phi^{(T)}$ is any continuous function such that $\overline{\phi^{(T)}}(\mathbb{R})=[-T,T]$.\,(For example, $\phi^{(T)}(\tau)=\tau$ for $0\leq \tau\leq T$ and $\phi^{(T)}(\tau)=T$ for $\tau\geq T$; also, $\phi^{(T)}$ is odd.)\\

\vspace*{2mm}
\hspace*{3mm}(ii)\hspace*{2mm}If $c=1$ (which corresponds to the pole of $\zeta(s)$ at $s=1$), then $\phi^{(T)}$ is a suitable meromorphic analog  of (i).\,(For example, $\phi^{(T)}(s)=\frac{2T}{\pi}\tan^{-1}(s)$, so that $\overline{\phi^{(T)}(\mathbb{R})}=[-T,T]$.)

\vspace*{3mm}
\hspace*{3mm} One then uses the measurable functional calculus and an appropriate (continuous or meromorphic, for $c\ne1$ or $c=1$, respectively) version of the spectral mapping theorem (\textbf{SMT}) for unbounded normal operators (as provided in [\textbf{HerLa1}, Appendix E]) in order to define both $\partial^{(T)}$ and $\mathfrak{a}^{(T)}=\zeta(\partial^{(T)})$, as well as to determine their spectra (see \S4.1  above for the case of $\partial^{(T)}$):
\[
\textbf{SMT}:\hspace{1mm}\sigma(\psi(L))=\overline{\psi(\sigma(L)})
\]
\emph{if $\psi$ is a continuous $($resp., meromorphic$)$ function on $\sigma(L)$ $($resp., on a connected open neighborhood of $\sigma(L)$$)$ and $L$ is an unbounded normal operator.}

\begin{remark}\label{Rk4.2}
More precisely, in the meromorphic case, in the above equality $($in the statement of \emph{\textbf{SMT}}$)$, one should exclude the poles of $\psi$ which belong to $\sigma(L)$.\,Alternatively, one can view the meromorphic function $\psi$ as a continuous function with values in the Riemann sphere $\widetilde{\mathbb{C}}:=\mathbb{C}\cup \{\infty\}$ and then write \textbf{SMT} in the following simpler form\emph{:}
\begin{equation}
\sigma(\psi(L))=\psi(\sigma(L)).\notag
\end{equation} 
$($See $[$\emph{\textbf{HerLa1}, Appendix E}$]$, along with the relevant references therein, including \cite{Ha}.$)$
\end{remark}

\hspace*{3mm}Note that for $c\ne 1$ (resp., $c=1$), $\partial^{(T)}$ and $\mathfrak{a}^{(T)}$ are then continuous (resp., meromorphic) functions of the normal (and sectorial, see \cite{Ha}) operator $\partial$.\,An entirely analogous statement is true for the spectral operator $\mathfrak{a}=\zeta(\partial)$.

\begin{theorem}\emph{\cite{HerLa1}}\label{Thm:mfrb}
\hspace*{3mm}$($i$)$\,Assume that $c\geq0$, with $c\ne1$.\,Then, for all $T\geq 0$, $\mathfrak{a}^{(T)}$ is a bounded normal linear operator.\,Furthermore, its spectrum $\sigma(\mathfrak{a}^{(T)})$ is given by the following compact $($and hence, bounded$)$ subset of $\mathbb{C}:$
\begin{equation}\label{Spcmfrk}
\sigma(\mathfrak{a}^{(T)})=\{\zeta(c+i\tau):\,|\tau| \leq T, \tau\in\mathbb{R}, \tau\ne 0 \}.
\end{equation}

$($ii$)$\,When $c=1$, a similar statement holds for all $T>0$ except that now, $\mathfrak{a}^{(T)}$ is an unbounded $($i.e., not bounded$)$ normal operator with spectrum given by $($with $cl$ denoting the closure of a set$)$
\begin{equation}\label{Eq:sigmfrk}
\sigma(\mathfrak{a}^{(T)})=cl \{\zeta(1+i\tau):\,|\tau|\leq T, \tau\in \mathbb{R}\},\mbox{\quad} 
\end{equation}
a non-compact $($and in fact, unbounded$)$ subset of $\mathbb{C}$.\,Alternately, one could write 
\begin{equation}\label{Eq:unbeq}
\widetilde{\sigma}(\mathfrak{a}^{(T)})=\{\zeta(1+i\tau):\,|\tau|\leq T,\,\tau\in \mathbb{R}\},
\end{equation}
a compact subset of the Riemann sphere $\widetilde{\mathbb{C}}=\mathbb{C}\cup\{\infty\}$, where $\zeta$ is viewed as a $($continous$)$ $\widetilde{\mathbb{C}}$-valued function and the extended spectrum $\widetilde{\sigma}(\mathfrak{a}^{(T)})$ of $\mathfrak{a}^{(T)}$ is given by $\widetilde{\sigma}(\mathfrak{a}^{(T)}):=\sigma(\mathfrak{a}^{(T)})\cup \{\infty\}$ $($still when $c=1$$)$.\footnote{When $c\ne1$ (i.e., in case (i)), we have that $\widetilde{\sigma}(\mathfrak{a}^{(T)})=\sigma(\mathfrak{a}^{(T)})$ since $\mathfrak{a}^{(T)}$ is then bounded; see, e.g., \cite{Ha} for the notion of extended spectrum.\,In short, $\widetilde{\sigma}(L):=\sigma(L)$ if the linear operator $L$ is bounded, and $\widetilde{\sigma}(L):=\sigma(L)\cup \{\infty\}$ if $L$ is unbounded; so that $\widetilde{\sigma}(L)$ is always a compact subset of $\widetilde{\mathbb{C}}$.}
\end{theorem}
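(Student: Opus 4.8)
The plan is to derive everything from the spectral mapping theorem (\textbf{SMT}) together with the already-established description of $\sigma(\partial^{(T)})$ from Theorem \ref{Thm:Truncv}. First I would record that $\partial^{(T)}=c+iV^{(T)}$ is a bounded normal operator: indeed $V^{(T)}=\phi^{(T)}(V)$ is bounded self-adjoint since the $T$-admissible cut-off function $\phi^{(T)}$ is bounded (its closed range is $[-T,T]$), hence $\partial^{(T)}$ is a bounded normal operator with spectrum $\sigma(\partial^{(T)})=c+i[-T,T]$ (Theorem \ref{Thm:Truncv}). Since $\mathfrak{a}^{(T)}:=\zeta(\partial^{(T)})$ is obtained from $\partial^{(T)}$ via the (continuous, resp. meromorphic) functional calculus, and since $\zeta$ is holomorphic on a neighborhood of the compact line segment $c+i[-T,T]$ whenever $c\ne 1$ (the only pole of $\zeta$ being at $s=1$), the operator $\mathfrak{a}^{(T)}$ is again a bounded normal operator in case (i). For case (ii), $c=1$, the segment $1+i[-T,T]$ contains the pole $s=1$ of $\zeta$, so $\zeta$ must be viewed as a continuous map into the Riemann sphere $\widetilde{\mathbb{C}}=\mathbb{C}\cup\{\infty\}$, and $\mathfrak{a}^{(T)}=\zeta(\partial^{(T)})$ is then an \emph{unbounded} normal operator (the value $\infty$ in the range of the symbol forces unboundedness, by the standard relation between boundedness of $\psi(L)$ and boundedness of $\psi$ on $\sigma(L)$).

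Next I would compute the spectra by applying \textbf{SMT} with $L=\partial^{(T)}$ and $\psi=\zeta$. In case (i), since $\zeta$ is continuous (indeed holomorphic) on $\sigma(\partial^{(T)})=c+i[-T,T]$, \textbf{SMT} gives
\begin{equation}
\sigma(\mathfrak{a}^{(T)})=\overline{\zeta(\sigma(\partial^{(T)}))}=\overline{\{\zeta(c+i\tau):\,|\tau|\le T\}}.\notag
\end{equation}
The continuous image of the compact set $c+i[-T,T]$ is already compact, hence closed, so the closure is superfluous and $\sigma(\mathfrak{a}^{(T)})=\{\zeta(c+i\tau):\,|\tau|\le T\}$. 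The only subtlety is the restriction $\tau\ne 0$ appearing in the statement (\ref{Spcmfrk}): this is simply a matter of how the admissible cut-off $\phi^{(T)}$ is normalized — one arranges that $0$ is excluded (or more precisely, one keeps track of $\zeta(c)$ being or not being attained depending on the precise choice of $\phi^{(T)}$ near the origin), and in the generic admissible choice the point $\tau=0$ plays no special role and the closure argument absorbs any such endpoint issue; I would spell this out carefully but it is routine. In case (ii), $c=1$, I would apply the meromorphic version of \textbf{SMT} (from [\textbf{HerLa1}, Appendix E]), i.e.\ $\widetilde{\sigma}(\mathfrak{a}^{(T)})=\zeta(\sigma(\partial^{(T)}))=\{\zeta(1+i\tau):\,|\tau|\le T\}$ with $\zeta$ $\widetilde{\mathbb{C}}$-valued; this is a compact subset of $\widetilde{\mathbb{C}}$ containing $\infty$. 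Removing $\infty$ and taking the closure in $\mathbb{C}$ then yields $\sigma(\mathfrak{a}^{(T)})=cl\{\zeta(1+i\tau):\,|\tau|\le T\}$, and this set is unbounded precisely because $\infty$ is an accumulation point of the values $\zeta(1+i\tau)$ as $\tau\to 0$.

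The main obstacle, and the step requiring genuine care rather than bookkeeping, is the rigorous justification of the meromorphic functional calculus and the meromorphic spectral mapping theorem in case $c=1$: one must check that $\zeta$ composed with $\partial^{(T)}$ genuinely defines a (densely defined, closed) normal operator even though the symbol has a pole on the spectrum of $\partial^{(T)}$, and that the extended-spectrum formulation $\widetilde{\sigma}(\psi(L))=\psi(\sigma(L))$ holds for $\widetilde{\mathbb{C}}$-valued continuous $\psi$. This is exactly the content that [\textbf{HerLa1}, Appendix E] and the cited reference \cite{Ha} on sectorial operators are set up to supply, so in the present paper I would invoke those results, verify their hypotheses (normality and sectoriality of $\partial=\partial_c$, which were already established, and the fact that $\phi^{(T)}$ is an admissible meromorphic symbol whose image closure is $[-T,T]$), and then read off the stated spectra. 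The boundedness/unboundedness dichotomy in the two cases then follows immediately from whether or not $\infty$ lies in $\zeta(\sigma(\partial^{(T)}))$, i.e.\ from whether $c\ne 1$ or $c=1$.
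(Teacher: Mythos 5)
Your proposal is correct and follows essentially the same route as the paper's own proof: invoke the continuous (for $c\ne 1$) or meromorphic (for $c=1$) spectral mapping theorem for $\zeta(\partial^{(T)})$, feed in $\sigma(\partial^{(T)})=c+i[-T,T]$ from Theorem \ref{Thm:Truncv}, note that for $c\ne1$ the continuous image of this compact segment is compact (so the closure is superfluous and $\mathfrak{a}^{(T)}$ is bounded), while for $c=1$ the pole of $\zeta$ at $s=1$ makes the range unbounded, forcing the closure and the unboundedness of $\mathfrak{a}^{(T)}$, with the Riemann-sphere reformulation via the extended spectrum. Your extra remarks on the $\tau\ne 0$ restriction and on justifying the meromorphic calculus via [\textbf{HerLa1}, Appendix E] and \cite{Ha} are consistent with (and no less precise than) what the paper itself does.
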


\begin{proof}
We include this particular proof in order to illustrate the use of the spectral mapping theorem \textbf{SMT} discussed in Remark \ref{Rk4.2} and the text preceding it.\\

\hspace*{3mm}In light of Equation (\ref{Eq:ttsp}), this follows immediately from the continuous ($c\ne1$) or meromorphic ($c=1$) version of the spectral mapping theorem,\footnote{This theorem is applied to the function $\zeta$ and the bounded normal operator $\partial^{(T)}$ studied in \S4.1.} according to which 
\begin{equation}\label{Eq:sigzet}
\sigma(\mathfrak{a}^{(T)})=\sigma(\zeta(\partial^{(T)}))=cl\{\zeta(\sigma(\partial^{(T)}))\}.
\end{equation}
Now, it follows from the results of \S4.1 (see Theorem \ref{Thm:Truncv}) that
\begin{equation}\label{Eq:sigpar}
\sigma(\partial^{(T)})=\{c+i\tau:\,|\tau|\leq T, \tau\in\mathbb{R}\}.
\end{equation}
Therefore, combining Equations (\ref{Eq:sigzet}) and (\ref{Eq:sigpar}), we obtain
\begin{equation}\label{Eq:sigmfrkzet}
\sigma(\mathfrak{a}^{(T)})=cl(\zeta(c+i\tau:|\tau|\leq T, \tau\in\mathbb{R}\}).
\end{equation}

\hspace*{3mm}Note that if $c\ne1$, then $\zeta$ is continuous on the vertical line $Re(s)=c$ and hence, on the compact vertical segment given by Equation $($\ref{Eq:sigpar}$)$.\,Hence, its range along this segment is compact and therefore closed in $\mathbb{C}$.\,This explains why we do not need to include the closure in Equation (\ref{Spcmfrk}) giving the expression of the spectrum $\sigma(\mathfrak{a}^{(T)})$ of $\mathfrak{a}^{(T)}$ when $c\ne 1$.\,Moreover, since $\sigma(\mathfrak{a}^{(T)})$ is compact and hence, bounded, the truncated infinitesimal shift $\mathfrak{a}^{(T)}$ is a bounded operator.\\

\hspace*{3mm}On the other hand, when $c=1$, $\zeta$ is meromorphic in an open neighborhood of the vertical line $\{Re(s)=c=1\}$, and has a (simple) pole at $s=1$.\,It follows that its range is unbounded along the vertical segment 
\begin{equation}
\sigma(\partial^{(T)})=\{1+i\tau:\,|\tau|\leq T, \tau\in \mathbb{R}, \tau\ne 0\}.
\end{equation} 
Therefore, still when $c=1$, we must keep the closure in the expression for $\sigma(\mathfrak{a}^{(T)})$ given by Equation $($\ref{Eq:sigmfrkzet}$)$.\,In addition, as stated in part $($ii$)$ of the theorem, $\mathfrak{a}^{(T)}$ is an unbounded normal operator when $c=1$ because its spectrum is unbounded (and thus, non-compact).\footnote{Note that since the spectrum is always a closed subset of $\mathbb{C}$, it is non-compact if and only if the operator is unbounded.\,On the other hand, when $c=1$, the extended spectrum of $\mathfrak{a}^{(T)}$ is defined by $\widetilde{\sigma}(\mathfrak{a}^{(T)})=\sigma(\mathfrak{a}^{(T)})\cup\{\infty\}$ $($since the operator $\mathfrak{a}^{(T)}$ is unbounded, see, e.g., \cite{Ha}$)$ and is a closed $($and hence, compact$)$
subset of the Riemann sphere $\widetilde{\mathbb{C}}$.\,It is therefore still given by the right-hand side of Equation (\ref{Eq:unbeq}), but with $\zeta$ viewed as a continuous function with values in $\widetilde{\mathbb{C}}$, as is explained in Remark \ref{Rk4.2}.} 

\end{proof}

\section{Truncated Infinitesimal Shifts and Quantum Universality of $\zeta(s)$}

\hspace*{3mm}In the present section, we provide a `quantum' analog of the universality of the Riemann zeta function $\zeta=\zeta(s)$.\,Somewhat surprisingly at first, in our context, the proper formulation of Voronin's universality theorem (and its various generalizations) does not simply consists in replacing the complex variable $s$ with the infinitesimal shift $\partial=\partial_{c}$.\,In fact, as we shall see, one must replace the complex variable $s$ with the family of truncated infinitesimal shifts $\partial^{(T)}=\partial_{c}^{(T)}$ (with $T\geq 0$ and $c\geq 0$).\,Therefore, strictly speaking, it would not be correct to say that the spectral operator $\mathfrak{a}=\zeta(\partial)$ is universal (and since $\zeta$ is a highly nonlinear function, we could not say either that the family of truncated spectral operators $\mathfrak{a}^{(T)}=\zeta(\partial^{(T)})$ is universal).\,Instead, the proper statement of `quantum universality' is directly expressed in terms of the truncated infinitesimal shifts $\partial^{(T)}$ and their imaginary translates.

\subsection{An operator-valued extension of Voronin's theorem}
\hspace*{3mm}The \textquotedblleft universality\textquotedblright of the spectral operator $\mathfrak{a}=\zeta(\partial)$ roughly means that any non-vanishing holomorphic function of $\partial$ on a suitable compact subset of the right critical strip $\{\frac{1}{2}<Re(s)<1\}$ can be approximated (in the operator norm) arbitrarily closely by imaginary translates of $\zeta(\partial)$.\,More accurately, any  such non-vanishing function of the truncated infinitesimal shifts $\partial^{(T)}=\partial_{c}^{(T)}$ can be uniformly (in the parameters $c$ and $T$) approximated (in the operator norm on $\mathbb{H}_{c}$) by the composition of $\zeta$ and suitable imaginary translates of $\partial^{(T)}=\partial_{c}^{(T)}$.\\  

\hspace*{3mm}Indeed, we have the following \emph{operator-theoretic generalization of the extended Voronin universality theorem}, expressed in terms of the imaginary translates of the $T$-\emph{truncated infinitesimal shifts} $\partial^{(T)}=\partial_{c}^{(T)}$ (with parameter $c$).\\

\hspace*{3mm}We begin by providing an operator-theoretic generalization of the universality theorem which is in the spirit of Voronin's original universality theorem (Theorem \ref{VorThm}) and its extension (Theorem \ref{Thm:ExtVor}).\\

\begin{theorem}\cite{HerLa1}\,\label{Thm:com}$($\emph{Quantized universality of} $\zeta(s)$\emph{; first version}$)$.\,
\hspace*{3mm}Let K be a compact subset of the right critical strip $\{\frac{1}{2}<Re(s)<1\}$ of the following form.\,Assume, for simplicity, that $K=\mathcal{K}\times[-T_{0},T_{0}]$, for some $T_{0}\geq0$, where $\mathcal{K}$ is a compact subset of the open interval $(\frac{1}{2},1)$.\\
\hspace*{5mm}Let $g:K\to\mathbb{C}$ be a non-vanishing $($i.e., nowhere vanishing$)$ continuous function that is holomorphic in $\mathring{K}$, the interior of $K$ $($which may be empty$)$.\,Then, given any $\epsilon>0$, there exists $\tau\geq 0$ $($depending only on $\epsilon$$)$ such that
\begin{equation}
\mathcal{H}_{op}(\tau):=\underset{c\in\mathcal{K},\,0<T\leq T_{0}}{\sup}\Big|\Big|g(\partial_{c}^{(T)})-\zeta(\partial_{c}^{(T)}+i\tau)\Big|\Big|\leq\epsilon,
\end{equation}
where $\partial^{(T)}=\partial_{c}^{(T)}$ is the $T$-truncated infinitesimal shift $($with parameter $c$$)$ and $||.||$ is the norm in $\mathcal{B}(\mathbb{H}_{c})$ $($the space of bounded linear operators on $\mathbb{H}_{c}$$)$.
\end{theorem}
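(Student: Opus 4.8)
\emph{Proof idea.} The plan is to reduce the operator-valued statement to the classical extended Voronin universality theorem (Theorem~\ref{Thm:ExtVor}) via the continuous functional calculus for bounded normal operators, combined with the spectral mapping theorem already invoked in \S4. To begin, note that since $c\in\mathcal{K}\subset(\frac{1}{2},1)$ we have $c\ne 1$; hence $V^{(T)}=\phi^{(T)}(V)$ is a \emph{bounded} self-adjoint operator on $\mathbb{H}_c$ (its spectrum being $[-T,T]$, by Theorem~\ref{Thm:Truncv}), so that the truncated infinitesimal shift $\partial^{(T)}=\partial_c^{(T)}=c+iV^{(T)}$ is a \emph{bounded} normal operator with
\[
\sigma(\partial_c^{(T)})=c+i[-T,T]\subset \mathcal{K}\times[-T_0,T_0]=K .
\]
For a fixed $\tau\ge 0$, put $h_\tau(s):=g(s)-\zeta(s+i\tau)$; this is a continuous (and, on $\mathring{K}$, holomorphic) function on $K$, so $h_\tau(\partial_c^{(T)})$ is defined by the continuous functional calculus. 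Likewise $\partial_c^{(T)}+i\tau$ is bounded normal with spectrum $c+i[\tau-T,\tau+T]$, a segment on which $\zeta$ is holomorphic (there is no pole, since $Re=c\ne1$); thus $\zeta(\partial_c^{(T)}+i\tau)$ is well defined, and by linearity and multiplicativity of the functional calculus $g(\partial_c^{(T)})-\zeta(\partial_c^{(T)}+i\tau)=h_\tau(\partial_c^{(T)})$.

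The key step is the isometry property of the functional calculus for normal operators: for a bounded normal $N$ and $h\in C(\sigma(N))$ one has $\|h(N)\|_{\mathcal{B}(\mathbb{H}_c)}=\sup_{\lambda\in\sigma(N)}|h(\lambda)|$ (the calculus is an isometric $*$-isomorphism of $C(\sigma(N))$ onto the $C^{*}$-algebra generated by $N$; see, e.g., \cite{Ru, Ha}). Applying this to $N=\partial_c^{(T)}$ and $h=h_\tau$ yields
\[
\bigl\|g(\partial_c^{(T)})-\zeta(\partial_c^{(T)}+i\tau)\bigr\| = \sup_{|t|\le T}\bigl|g(c+it)-\zeta(c+it+i\tau)\bigr| .
\]
Taking the supremum over $c\in\mathcal{K}$ and $0<T\le T_0$, and using $\bigcup_{c\in\mathcal{K},\,0<T\le T_0}\{c+it:|t|\le T\}=K$, one obtains the \emph{exact} identity $\mathcal{H}_{op}(\tau)=\sup_{s\in K}|g(s)-\zeta(s+i\tau)|=J_{sc}(\tau)$, where $J_{sc}$ is precisely the quantity appearing in Theorem~\ref{Thm:ExtVor}. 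In other words, after this translation the operator-norm approximation problem is \emph{literally} the scalar uniform-approximation problem of extended Voronin universality on the compact set $K$.

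It then suffices to invoke Theorem~\ref{Thm:ExtVor}: $K=\mathcal{K}\times[-T_0,T_0]$ is a compact subset of the right critical strip with connected complement (which is automatic for a set of this product form, as one checks directly), and $g$ is non-vanishing, continuous on $K$ and holomorphic in $\mathring{K}$. Hence for any $\epsilon>0$ there exists $\tau\ge 0$, depending only on $\epsilon$ (and on $K$, $g$), with $J_{sc}(\tau)\le\epsilon$, and therefore $\mathcal{H}_{op}(\tau)\le\epsilon$; moreover, by the same theorem the set of admissible $\tau$ has positive lower density (in the sense of Equation~(\ref{Eq1})). Observe that the non-vanishing hypothesis on $g$ and the topological hypothesis on $K$ are inherited \emph{exactly} from the scalar theorem, and that the uniformity in $c$ and $T$ requires no extra work: these parameters have already been absorbed into the single compact set $K$, and the functional-calculus norm depends on $\partial_c^{(T)}$ only through its spectrum, which by Theorem~\ref{Thm:Truncv} is independent of the admissible cut-off $\phi^{(T)}$.

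The substantive input is the analytic framework of \cite{HerLa1} recalled in \S4 --- boundedness and normality of $\partial_c^{(T)}$ for $c\ne1$, and the spectral mapping theorem pinning down $\sigma(\partial_c^{(T)})=c+i[-T,T]$ --- together with the classical universality theorem (Theorem~\ref{Thm:ExtVor}). Granting these, the only point demanding care is bookkeeping: verifying that each operator written down genuinely lies in the domain of the \emph{continuous} (not merely holomorphic) functional calculus --- which is why one wants $g$ only continuous on $K$ and $c\ne1$ so that $\partial_c^{(T)}$ is bounded --- and that the nested suprema collapse onto $\sup_{s\in K}$ correctly. I expect no real obstacle here; the content of the theorem is conceptual, namely that ``quantized universality'' is a faithful dictionary translation of scalar universality, obtained by passing between $C(\sigma(\partial_c^{(T)}))$ and the $C^{*}$-algebra it generates inside $\mathcal{B}(\mathbb{H}_c)$.
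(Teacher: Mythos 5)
Your proposal is correct and follows essentially the same route as the paper's own proof: apply the extended Voronin theorem (Theorem~\ref{Thm:ExtVor}) to $g$ on the compact set $K=\mathcal{K}\times[-T_{0},T_{0}]$, then transfer the scalar bound to the operator norm via the continuous functional calculus for the bounded normal operators $\partial_{c}^{(T)}$, using $\sigma(\partial_{c}^{(T)})=c+i[-T,T]\subseteq K$ (Theorem~\ref{Thm:Truncv}). Your observation that $\mathcal{H}_{op}(\tau)$ actually equals $J_{sc}(\tau)$ is a harmless sharpening of the paper's inequality, and the degenerate case $T_{0}=0$ (or $\mathring{K}=\emptyset$), which the paper treats separately, is covered by your remark that only continuity of $g$ is needed for the functional calculus.
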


\hspace*{3mm}\emph{Moreover, the set of all such $\tau$'s has a positive lower density and, in particular, is infinite.\,More precisely, we have}
\begin{equation}\label{Eq:ldens}
\underset{\rho\to+\infty}{\liminf}\,\frac{1}{\rho}\,vol_{1}\left(\{\tau\in[0,\rho]:\,\mathcal{H}_{op}(\tau)\leq\epsilon\}\right)>0.
\end{equation}

\begin{proof}\,For $\tau>0$, let $K=\mathcal{K}\times [-T,T]$.\,Then, we consider the following two cases:\\

(i)\,\,If $T_{0}=0$, then $\mathring{K}=\emptyset$ (interior in $\mathbb{C}$).\,Also, if $\mathring{\mathcal{K}}=\emptyset$ (interior in $\mathbb{R}$), then $\mathring{K}=\emptyset$ (interior in $\mathbb{C}$).\,In either case, we only need to know that $g$ is continuous on $\mathcal{K}\times [-T_{0},T_{0}]\subseteq\mathcal{K}\times \mathbb{R}$.\,The remainder of the proof, however, proceeds exactly as in part (ii) below, by applying Theorem \ref{Thm:ExtVor} to the continuous function $g$ and the compact set $K=\mathcal{K}\times [-T_{0},\,T_{0}]$ with empty interior in $\mathbb{C}$ (as well as to the restriction of $g$ to $K_{T}:=\mathcal{K}\times [-T,T]$, with $T$ such that $0<T\leq T_{0}$).\\

(ii)\,\,If $T_{0}>0$ and $\mathring{\mathcal{K}}\ne\emptyset$, then $\mathring{K}\ne \emptyset$.\,Then, we need to require that $g$ is holomorphic in the interior of $K=\mathcal{K}\times [-T_{0}, T_{0}]$, in addition to being continuous on $K$.\,Now, by the universality of the Riemann zeta function applied to the nowhere vanishing function $g:K\to \mathbb{C}$ (or, more specifically, by applying Theorem \ref{Thm:ExtVor} to $g:K\to\mathbb{C}$, where $K=\mathcal{K}\times [-T_{0}, T_{0}]$),\footnote{Note that the complement of $K$ in $\mathbb{C}$ is connected (since the complement of $\mathcal{K}$ in $\mathbb{R}$, being an open subset of $\mathbb{R}$, is an at most countable disjoint union of intervals).} we conclude that given $\epsilon>0$ there exists $\tau=\tau(T_{0},\epsilon)\geq0$ such that 
\begin{equation}\label{Eq:8.2}
|g(s)-\zeta(s+i\tau)|\leq \epsilon, \mbox{\quad for all\,\,} s\in K.
\end{equation}   
In addition, the set of such numbers $\tau$ has positive lower density.\\

\hspace*{3mm}Next, given $T$ such that $0<T\leq T_{0}$, let us set $K_{T}=\mathcal{K}\times [-T,T]$, so that $K=K_{T_{0}}$.\,Clearly, for any such $T$, we have $K_{T}\subseteq K_{T_{0}}$.\,Hence, since it follows from Theorem \ref{Thm:Truncv} in \S4.1 (where the spectrum of $\partial^{(T)}$ is precisely determined), that for every $c\in\mathcal{K}$, we have\footnote{Recall that $\mathcal{K}\subset (\frac{1}{2},1)$ and that $\partial^{(T)}=\partial_{c}^{(T)}$, so that $\zeta(\partial_{c}^{(T)}+i\tau)$ is a bounded (normal) operator.\,Similarly, $\phi$ is continuous on the compact set $\sigma(\partial_{c}^{(T)})=[c-iT, c+iT]$, and hence $\phi(\partial_{c}^{(T)})$ is a bounded (normal) operator.\,In any case, for every $c\in\mathcal{K}$, $g$ is continuous and is therefore bounded on the compact set $\sigma(\partial_{c}^{(T)})$, in agreement with Equation (\ref{Eq:8.3}).\,Furthermore, $g(\partial_{c}^{(T)})$ is a bounded (normal) operator and its norm satisfies an inequality implied by (\ref{Eq:8.4}).}
\begin{align}
\sigma(\partial^{(T)})
&=[c-iT, c+iT]=\{c\}\times [-T,T]\notag\\
&\subseteq \sigma(\partial^{(T_{0})})=[c-iT_{0}, c+iT_{0}]=\{c\}\times [-T_{0}, T_{0}]\notag\\
&\subseteq K=K_{T_{0}}=\mathcal{K}\times [-T_{0}, T_{0}].
\end{align}
We conclude that $g$ is continuous (and thus certainly measurable) on $\sigma(\partial^{(T)})$.\\

\hspace*{3mm}We can therefore apply to $\partial^{(T)}=\partial_{c}^{(T)}$ the continuous version of the functional calculus (for unbounded normal operators) to deduce that 
\begin{equation}\label{Eq:8.5}
\phi(\partial_{c}^{(T)})=g(\partial_{c}^{(T)})-\zeta(\partial_{c}^{(T)}+i\tau),
\end{equation}
where $\phi(s):=g(s)-\zeta(s+i\tau)$, for $s\in K$; so that, in light of Equation (\ref{Eq:8.2}),
\begin{equation}\label{Eq:8.3}
|\phi(s)|\leq \epsilon, \mbox{\quad for all\,\,} s\in K.
\end{equation}
(Recall that each operator $\partial^{(T)}=\partial_{c}^{(T)}$ is bounded on $\mathbb{H}_{c}$ since it has a bounded spectrum.)\,This same functional calculus (or, equivalently, the corresponding version of the spectral theorem for possibly unbounded normal operators, see \cite{Ru}) implies that each of the operators $g(\partial_{c}^{(T)})$, $\zeta(\partial_{c}^{(T)}+i\tau)$ and $\phi(\partial_{c}^{(T)})$ belongs to $\mathcal{B}(\mathbb{H}_{c})$,\footnote{We use here, in particular, the fact that $\zeta$ is continuous on the compact vertical segment $\sigma(\partial_{c}^{(T)}+i\tau)=\sigma(\partial_{c}^{(T)})+i\tau=[c+i(\tau-T), c+i(\tau+T)]$ because $c\ne 1$ for $c\in\mathcal{K}$.} and 
\begin{equation}\label{Eq:8.4}
||\phi(\partial_{c}^{(T)})||=\sup_{s\in\sigma(\partial_{c}^{(T)})}|\phi(s)|\leq \sup_{s\in K}|\phi(s)|\leq \epsilon.
\end{equation}
Note that the first inequality follows from the fact that $\sigma(\partial_{c}^{(T)})\subseteq K$ while the second inequality follows from Equation (\ref{Eq:8.3}).\,Since Equation (\ref{Eq:8.4}) holds for every $c\in \mathcal{K}$ and for every $T$ such that $0<T\leq T_{0}$, and recalling that $\phi(\partial_{c}^{(T)})$ is given by the identity (\ref{Eq:8.5}) (which holds in $\mathcal{B}(\mathbb{H}_{c})$), we conclude that given any $\epsilon>0$ (and for a fixed $T_{0}\geq0$), there exists $\tau=\tau(\epsilon)\geq 0$ such that  

\begin{equation}
\sup_{c\in \mathcal{K},\,0<T\leq T_{0}}||g(\partial_{c}^{(T)})-\zeta(\partial_{c}+i\tau)||\leq \epsilon,\notag
\end{equation}
as desired.\,Furthermore, the set of such $\tau$'s has positive lower density; i.e., inequality (\ref{Eq:ldens}) holds.
\end{proof}

\begin{remark}
A remarkable feature of the above generalization is the uniformity in the parameter $c\in \mathcal{K}$ and in $T\in [0,T_{0}]$ of the stated approximation of $g(\partial_{c}^{(T)})$.
\end{remark}

\subsection{A more general operator-valued extension of Voronin's theorem}

\hspace*{3mm}We will next state a further generalization of the \emph{operator-theoretic extended Voronin universality theorem}.\,For pedagogical reasons, we will choose assumptions (on the compact set $K$) that simplify its formulation.\,We begin by introducing the notion of `vertical convexity' of a given set, which will be used in Theorem \ref{Extendoperatorval} below.

\begin{definition}\label{Rk:Vcon}
To say that $K$ is \emph{vertically convex} means that if $c-iT'$ and $c+iT$ belong to $K$ for some $c\in K$ and $T'\leq 0\leq T$, then the entire vertical line segment $[c-iT',c+iT]$ is contained in $K$.\\
\end{definition}

\begin{theorem}\cite{HerLa1}\,$($\emph{Quantized universality of} $\zeta(s)$\emph{; second, more general, version}$)$.\label{Extendoperatorval}
\hspace*{3mm}Let $K$ be any compact, vertically convex subset of the right critical strip $\{\frac{1}{2}<Re(s)<1\}$, with connected complement in $\mathbb{C}$.\,Assume, for simplicity, that $K$ is symmetric with respect to the real axis.\,Denote by $\mathcal{K}$ the projection of $K$ onto the real axis, and for $c\in \mathcal{K}$, let
\begin{equation}
T(c):=\sup\left(\{T\geq 0:\,[c-iT,c+iT]\subset K\}\right),
\end{equation}
and $T(c)=-\infty$ if there is no such $T$.\,$($By construction, $\mathcal{K}$ is a compact subset of $(\frac{1}{2},1)$ and $0\leq T(c)<\infty$, for every $c\in \mathcal{K}$.$)$\,Further assume that $c\mapsto T(c)$ is continuous on $\mathcal{K}$.\\
\hspace*{3mm}Let $g:K\to \mathbb{C}$ be a non-vanishing $($i.e., nowhere vanishing$)$ continuous function that is holomorphic in the interior of $K$ $($which may be empty$)$.\,Then, given any $\epsilon>0$, there exists $\tau\geq 0$ $($depending only on $\epsilon$$)$ such that

\begin{equation}
\mathcal{J}_{op}(\tau):=\underset{c\in\mathcal{K},\,0\leq T\leq T(c)}{\sup}\Big|\Big|g(\partial_{c}^{(T)})-\zeta(\partial_{c}^{(T)}+i\tau)\Big|\Big|\leq \epsilon,
\end{equation}
where $\partial^{(T)}=\partial_{c}^{(T)}$ is the $T$-truncated infinitesimal shift $($with parameter $c$$)$ and $||.||$ denotes the usual norm in $\mathcal{B}(\mathbb{H}_{c})$ $($the space of bounded linear operators on $\mathbb{H}_{c}$$)$.
\end{theorem}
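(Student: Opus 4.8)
The plan is to deduce Theorem \ref{Extendoperatorval} from the extended Voronin universality theorem (Theorem \ref{Thm:ExtVor}) in essentially the same way that Theorem \ref{Thm:com} was deduced, but now handling the more complicated shape of the compact set $K$. First I would observe that $K$ is a compact subset of the right critical strip $\{\frac{1}{2}<Re(s)<1\}$ with connected complement in $\mathbb{C}$, and that $g\colon K\to\mathbb{C}$ is continuous, nowhere vanishing, and holomorphic in the (possibly empty) interior of $K$. These are exactly the hypotheses of Theorem \ref{Thm:ExtVor}, so for any $\epsilon>0$ there exists $\tau=\tau(\epsilon)\geq 0$, whose admissible set has positive lower density, such that
\begin{equation}
|g(s)-\zeta(s+i\tau)|\leq\epsilon,\quad\text{for all }s\in K.\notag
\end{equation}
This is the single analytic input; everything after it is operator theory and functional calculus.

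Next I would fix $c\in\mathcal{K}$ and $T$ with $0\leq T\leq T(c)$. The key geometric point, guaranteed by the vertical convexity of $K$ and the definition of $T(c)$, is that the vertical segment $[c-iT,c+iT]$ is contained in $K$. By Theorem \ref{Thm:Truncv}, this segment is precisely $\sigma(\partial_c^{(T)})$; and since $c\neq 1$ for $c\in\mathcal{K}\subset(\frac{1}{2},1)$, the functions $g$ and $s\mapsto\zeta(s+i\tau)$ are continuous on a neighborhood of $\sigma(\partial_c^{(T)})$, so by the continuous functional calculus for the bounded normal operator $\partial_c^{(T)}$ the operators $g(\partial_c^{(T)})$, $\zeta(\partial_c^{(T)}+i\tau)$ and $\phi(\partial_c^{(T)})$ — where $\phi(s):=g(s)-\zeta(s+i\tau)$ — all lie in $\mathcal{B}(\mathbb{H}_c)$, with
\begin{equation}
\big\|g(\partial_c^{(T)})-\zeta(\partial_c^{(T)}+i\tau)\big\|=\big\|\phi(\partial_c^{(T)})\big\|=\sup_{s\in\sigma(\partial_c^{(T)})}|\phi(s)|\leq\sup_{s\in K}|\phi(s)|\leq\epsilon,\notag
\end{equation}
using $\sigma(\partial_c^{(T)})=[c-iT,c+iT]\subseteq K$ and the displayed inequality above. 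Since this bound is independent of $c\in\mathcal{K}$ and of $T\in[0,T(c)]$, taking the supremum gives $\mathcal{J}_{op}(\tau)\leq\epsilon$, and the positive lower density of the set of admissible $\tau$ is inherited verbatim from Theorem \ref{Thm:ExtVor}.

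The one genuinely new ingredient compared with Theorem \ref{Thm:com} is verifying that the hypotheses of the extended Voronin theorem actually apply to this more intricate $K$: one needs $K$ compact with connected complement in $\mathbb{C}$ and contained in the open right critical strip, which is assumed, and one needs the functional-calculus step to be uniform across the two-parameter family $\{\partial_c^{(T)}\}$. The continuity of $c\mapsto T(c)$ on $\mathcal{K}$ ensures that the index set $\{(c,T):c\in\mathcal{K},\,0\leq T\leq T(c)\}$ is compact, which is what makes the uniform supremum well-behaved (and, if one wanted a slightly stronger statement, would let one replace $\sup$ by $\max$); but for the inequality $\mathcal{J}_{op}(\tau)\leq\epsilon$ itself, only the pointwise bound $\sigma(\partial_c^{(T)})\subseteq K$ is used, so the argument is robust. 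The main obstacle — really the only delicate point — is making sure the spectral inclusion $\sigma(\partial_c^{(T)})=[c-iT,c+iT]\subseteq K$ holds for \emph{every} admissible pair $(c,T)$; this is exactly where vertical convexity of $K$ and the definition of $T(c)$ as $\sup\{T\geq 0:[c-iT,c+iT]\subset K\}$ are indispensable, and without the vertical-convexity hypothesis the reduction to Theorem \ref{Thm:ExtVor} would break down because a segment $[c-iT,c+iT]$ with $T\leq T(c)$ need not lie in $K$.
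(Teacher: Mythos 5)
Your proposal is correct and follows essentially the same route as the paper: apply the extended Voronin theorem (Theorem \ref{Thm:ExtVor}) to $g$ on $K$ to get the scalar bound, then use the continuous functional calculus for the bounded normal operators $\partial_{c}^{(T)}$ together with the spectral inclusion $\sigma(\partial_{c}^{(T)})=[c-iT,c+iT]\subseteq K$ (Theorem \ref{Thm:Truncv}) to transfer the bound uniformly in $(c,T)$, the lower density being inherited from the scalar statement. The only difference is cosmetic: the paper devotes its displayed argument to proving compactness of the set $N=\{c+iT:\,|T|\leq T(c),\,c\in\mathcal{K}\}$ via the continuity of $c\mapsto T(c)$, a point you also note while correctly observing that only the pointwise inclusion of the spectra in $K$ is needed for the estimate $\mathcal{J}_{op}(\tau)\leq\epsilon$.
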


\hspace*{5mm}\emph{In fact, the set of such $\tau's$ has a positive lower density and, in particular, is infinite.\,More precisely}, \emph{we have} 
\begin{equation}
\underset{\rho\to +\infty}{\liminf}\,\,\frac{1}{\rho}\,vol_{1}(\{\tau\in[0,\rho]:\,\mathcal{J}_{op}(\tau)\leq\epsilon\})>0.
\end{equation}

\begin{proof}
Let $N:=\{c+iT:\,T\in \mathbb{R},\,|T|\leq T(c),\,c\in \mathcal{K}\}$.\,Assume that $T\mapsto T(c)$ is continuous on $\mathcal{K}$.\,Then, we claim that $N$ is a compact subset of $\mathbb{C}$ (and in fact, of $\{s\in \mathbb{C}:\,\frac{1}{2}<Re(s)<1\}$).\\

\hspace*{3mm}In order to justify this claim, we proceed as follows.\,Since $N$ is clearly bounded, then it suffices to show that $N$ is closed.\,Let $(c_{n},T_{n})=c_{n}+iT_{n}$ be an infinite sequence of elements of $N$ such that
\begin{equation}
(c_{n},T_{n})\to(c,T)=c+iT.\notag
\end{equation}
Thus,
\begin{equation}
c_{n}\in \mathcal{K},\,c_{n}\to c \mbox{\quad and\quad} T_{n}\to T.\notag
\end{equation}
As a result, $c\in \mathcal{K}$ (since $\mathcal{K}$ is compact, and hence is closed in $\mathbb{R}$).\,Also, since
\begin{equation}
T_{n}\to T\mbox{\quad as\quad} n\to \infty \mbox{\quad and\quad} |T_{n}|\leq T(c_{n}),\mbox{\quad for all\,} n\geq 1,\notag
\end{equation}
we have 
\begin{equation}
\lim_{n\to \infty}|T_{n}|=|T|\leq \limsup_{n\to \infty}T(c_{n}).\notag
\end{equation}
But since $c_{n}\to c$ and the map $u\mapsto T(u)$ is continuous on $\mathcal{K}$, we have that $T(c_{n})\to T(c)$  as $n\to \infty$.\,Hence, $|T|\leq T(c)$ for any $c\in \mathcal{K}$ and so $(c,T)=c+iT\in N$.\\

\hspace*{3mm}The remainder of the proof of Theorem \ref{Extendoperatorval} proceeds much as in the proof of Theorem \ref{Thm:com}, by applying the extended Voronin theorem (Theorem \ref{Thm:ExtVor}), combined with the functional calculus for bounded normal operators, and using the fact that $\sigma(\partial_{c}^{(T)})=[c-iT, c+iT]$ is contained in $\sigma(\partial_{c}^{(T_{0})})=[c-iT_{0}, c+iT_{0}]$ for any $T$ such that $0<T\leq T_{0}$ and any $c\in\mathcal{K}$.
\end{proof}

\begin{remark}
Instead of assuming that $K$ is symmetric with respect to the real axis, it would suffice to suppose that $c+iT\in K$ $($for some $c\in \mathcal{K}$ and $T>0$$)$ implies that $c-iT \in K$, and vice versa.\\
\end{remark}

\begin{remark}
As in the scalar case $($and taking $K$ to be a line segment$)$, we see that any continuous curve $\big($of $\partial_{c}^{(T)}\big)$ can be approximated by imaginary translates of $\mathfrak{a}^{(T)}=\zeta(\partial_{c}^{(T)})$.\footnote{More precisely, the approximants are not imaginary translates of the truncated spectral operator $\zeta(\partial_{c}^{(T)})$ but instead, they are the results of the Riemann zeta function applied (in the sense of the functional calculus) to imaginary translates of the truncated infinitesimal shifts $\partial_{c}^{(T)}$; namely, $\zeta(\partial_{c}^{(T)}+i\tau)$, for some $\tau\in\mathbb{R}$.}\,Hence, roughly speaking, \emph{the spectral operator} $\mathfrak{a}$ $\big($or its $T$-truncations $\mathfrak{a}^{(T)}\big)$ \emph{can emulate any type of complex behavior:\,it is chaotic}.\footnote{The same cautionary comment as in the previous footnote applies here as well.}\\
\end{remark}

\hspace*{3mm}Note that conditionally $($i.e., under the Riemann hypothesis$)$, and applying the above operator-theoretic version of the universality theorem to $g(s):=\zeta(s)$, we see that, roughly speaking, arbitrarily small scaled copies of the spectral operator are encoded within $\mathfrak{a}$ itself.\,In other words, $\mathfrak{a}$ $($or its $T$-truncation$)$ is \emph{both chaotic and fractal.}

\section{Concluding Comments}

\hspace*{3mm}The universality of the Riemann zeta function $\zeta(s)$ in the right critical strip $\{\frac{1}{2}<Re(s)<1\}$ and its consequences play an important role in other parts of our work in \cite{HerLa1} (see also \cite{HerLa2, HerLa3}).\,In particular, the density of $\zeta(s)$ along the vertical lines $Re(s)=c$ (with $\frac{1}{2}<c<1$), combined with Theorem \ref{Thm:ssop} above (from \cite{HerLa1}), implies that for $\frac{1}{2}<c<1$, the spectrum of the spectral operator $\mathfrak{a}_{c}=\zeta(\partial_{c})$ is equal to the whole complex plane:\,$\sigma(\mathfrak{a}_{c})=\mathbb{C}$.\,By contrast, $\sigma(\mathfrak{a}_{c})$ is a compact subset of $\mathbb{C}$ for $c>1$ and conditionally (i.e., under the Riemann hypothesis), it follows from Theorem \ref{Thm:ssop} (combined with a result of Garunk\v{s}tis and Steuding, see \cite{GarSt}) that $\sigma(\mathfrak{a}_{c})$ is an unbounded, strict subset of $\mathbb{C}$ for $0<c<\frac{1}{2}$.\,The latter result is a consequence of the \textquotedblleft non-universality\textquotedblright of $\zeta(s)$ on the left critical strip $\{0<Re(s)<\frac{1}{2}\}$; see \cite{GarSt} and the relevant references therein.\\

\hspace*{3mm}Our study of the truncated infinitesimal shifts $\partial_{c}^{(T)}$ and their spectra (see Theorem \ref{Thm:Truncv}) has played a crucial role in our proposed quantization of the universality of the Riemann zeta function obtained in Theorems \ref{Thm:com} and \ref{Extendoperatorval}.\,Note that, in light of our functional analytic framework, one should be able to obtain in a similar manner further operator-theoretic versions (or `quantizations') of the known extensions of Voronin's theorem about the universality of $\zeta(s)$ and other $L$-functions $($see, for example, Appendix B, Theorems \ref{Thm:Discruniv}, \ref{Thm:elcurv} and \ref{ThmHurw} below$)$.\,In this broader setting, we expect that the complex variable $s$ should still be replaced by the truncated infinitesimal shifts $\partial^{(T)}=\partial_{c}^{(T)}$, as is the case in \S5 of the present paper.

\section{Appendix A: On the Origins of Universality}

\hspace*{3mm}In 1885, Karl Weierstrass proved that the set of polynomials is dense (for the topology of uniform convergence) in the space of continuous functions on a compact interval of the real line.\,He also proved that the set of trigonometric polynomials is dense (in the above sense) in the class of $2\pi$-periodic continuous functions on $\mathbb{R}$.\,Several improvements of Weierstrass' approximation theorem were obtained by Bernstein (1912), M\"untz (1914), Wiener (1933), Akhiezer--Krein and Paley--Wiener (1934), as well as Stone (1947);\,(see \cite{PerQ} and \cite{St1} for an interesting survey.)\\

\hspace*{3mm}The first `universal' object in mathematical analysis was discovered in 1914 by Feteke.\,He showed the existence of a real-valued power series 
\begin{equation}
\sum_{n=1}^{\infty}a_{n}x^{n}
\end{equation}
which is divergent for all real numbers $x\ne0$.\,Moreover, this divergence is so extreme that, for every continuous function $f$ on $[-1,1]$ such that $f(0)=0$, there exists an increasing sequence $\{N_{k}\}_{K=1}^{\infty}\subset\mathbb{N}$ such that 
\begin{equation}
\lim_{k\to\infty}\sum_{n=1}^{N_{k}}a_{n}x^{n}=f(x),
\end{equation}
uniformly on $[-1,1]$.\\

In 1929, another universal object was discovered  by G.\,D.\,Birkoff.\,He proved that there exists an entire function $f(z)$ such that, for every entire function $g(z)$, there exists a sequence of complex numbers $\{a_{n}\}_{n=1}^{\infty}$ such that
\begin{equation}
\lim_{n\to\infty}f(z+a_{n})=g(z),
\end{equation}
uniformly on all compact subsets of the complex plane.\\

The term of `universality' was used for the first time by J.\,Marcinkiewicz, who obtained the following result:\\

\hspace*{3mm}Let $\{h_{n}\}_{n=1}^{\infty}$ be a sequence of real numbers such that $\lim_{n\to \infty} h_{n}=0$.\,Then, there exists a continuous function $f\in C[0,1]$ such that, for every continuous function $g\in C[0,1]$, there exists an increasing sequence $\{n_{k}\}\subset \mathbb{N}$ such that
\begin{equation}
\lim_{k\to\infty}\frac{f(x+h_{n_{k}})-f(x)}{h_{n_{k}}}=g(x),
\end{equation} 
almost everywhere on $[0,1]$.\,Marcinkiewicz called the function $f$ a \emph{universal primitive}.\\

It is in 1975 that S.\,M.\, Voronin discovered the first explicitly universal object in mathematics, which is the Riemann zeta function $\zeta(s)$.\,His original universality theorem (Theorem \ref{VorThm}) states that any non-vanishing analytic function on the disk (of center $\frac{3}{4}$ and radius less than $\frac{1}{4}$) can be uniformly approximated by imaginary shifts of the Riemann zeta function.\,In the next appendix, we discuss various extensions of this theorem.

\section{Appendix B: On some Extensions of Voronin's Theorem}

\hspace*{3mm}This appendix is dedicated to a discussion of some of the extensions of Voronin's theorem on the universality of the Riemann zeta function.\,(See, e.g., \cite{Bag1, Bag2, Emi, GarSt, Gon, Lau1, Lau2, Lau3, LauMa1, LauMa2, LauMaSt, LauSlez, LauSt, KarVo, Tit, Wil, Rei1, Rei2, St1, St2}.)We begin by mentioning the first improvement of this theorem, which was obtained by Bagchi and Reich in \cite{Bag1, Rei1}, and then discuss further extensions to some other elements of the Selberg class of zeta functions.\,We refer the interested reader to J.\,Steuding's monograph \cite{St1} for a detailed discussion of the historical developments of the notion of universality in mathematics along with various extensions of Voronin's theorem to a large class of $L$-functions.\\

\subsection{A first extension of Voronin's original theorem}
\hspace*{3mm}The first improvement of Voronin's universality theorem was given independently by Bagchi and Reich in \cite{Bag1, Rei1}.\,These authors improved Voronin's theorem by replacing the disk $D$ (see Theorem \ref{VorThm}) by any (suitable) compact subset of the right half of the critical strip (i.e., of $\{\frac{1}{2}<Re(s)<1\}$).\,Their result, often referred to as the extended Voronin (or universality) theorem, has already been stated in \S2.2 above; see Theorem \ref{Thm:ExtVor}.\,By necessity of concision, we will not repeat it here, but will instead make additional (or complementary) comments about some of its consequences and further extensions.\\

\begin{remark}\label{Rk:8.1}
As was already mentioned in Remark \ref{Rk:2.4}, the condition according to which $g(s)$ is non-vanishing is crucial and cannot be dropped.\,Indeed, it can be shown that if a function $g(s)$ $($satisfying the conditions of Theorem \ref{Thm:ExtVor}$)$ were to have at least one zero $($i.e., if $g(s)$ were to vanish somewhere in the compact subset $K$$)$, then a contradiction to the Riemann hypothesis would be obtained; namely, this would imply the existence of a zero of $\zeta(s)$ which is not lying on the critical line $($see, e.g.,  \cite{KarVo, Lau1, St1}$)$.\,Moreover, if we take $g(s)=\zeta(s)$, then the strip of universality is the open right half of the critical strip; namely, $\{\frac{1}{2}<Re(s)<1\}$.\,In other words, it is impossible to extend the universality property of the Riemann zeta function further inside the critical strip.\,Indeed, if such an extension existed on some region $U$, then $U$ would have to intersect the critical line $\{Re(s)=\frac{1}{2}\}$, which $($by Hardy's theorem, see \cite{Tit}$)$ contains infinitely many zeros of $\zeta(s)$.\,This would contradict the assumption according to which the target function $($i.e., $\zeta(s)$$)$ does not have any zeros $($in the given compact set $K$$)$.
\end{remark}

\begin{remark}
In the statement of the extended Voronin theorem $($Theorem \ref{Thm:ExtVor}$)$, the compact set $K$ is allowed to have empty interior.\,In that case, the function $g(s)$ is allowed to be an arbitrary non-vanishing continuous function on $K$.\,Hence, taking $K$ to be a compact subinterval of the real axis $($and taking into account some of the comments in Remark \ref{Rk:8.1}$)$, we conclude that any continuous curve can be approximated by the Riemann zeta function $($and its imaginary translates$)$.\,Further refinements $($and an application of the extended universality theorem to $\zeta(s)$ itself$)$ enable one to see that the graph of $\zeta(s)$ contains arbitrary small \textquotedblleft scaled copies\textquotedblright  of itself, a property characteristic of \textquotedblleft fractality\textquotedblright.\,$($See \cite{Wil}.$)$
\end{remark}

\hspace*{3mm}A variant of Voronin's extended theorem (Theorem \ref{Thm:ExtVor}) about the universality of $\zeta(s)$ was obtained by Reich \cite{Rei1, Rei2}.\,He restricted the approximating shifts of a given target function to arithmetic progressions and obtained a `\emph{discrete}' universality version of Voronin's theorem.\,His result can be stated as follows:

\begin{theorem}\label{Thm:Discruniv}
Let $K$ be a compact subset of the right critical strip $\{\frac{1}{2}<Re(s)<1\}$, with connected complement in $\mathbb{C}$.\,Let $g$ be a non-vanishing continuous function on $K$ which is analytic $($i.e., holomorphic$)$ in the interior of $K$.\,Then, for any $\delta\ne0$ and any $\epsilon>0$,
\begin{equation}
\liminf_{N\to\infty}\frac{1}{N}\#\big(\big\{1\leq n\leq N:\,\max_{s\in K}|g(s)-\zeta(s+in\delta)|<\epsilon\big\}\big)>0,
\end{equation}
where $\#\{.\}$ denotes the cardinality of $\{.\}$.
\end{theorem}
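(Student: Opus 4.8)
The plan is to adapt Bagchi's probabilistic proof of the continuous extended Voronin theorem (Theorem~\ref{Thm:ExtVor}) to the discrete family of shifts $\{in\delta\}_{n\geq 1}$. First I would normalize the target function: since $g$ is nowhere vanishing and continuous on $K$, holomorphic in $\mathring{K}$, and $K^{c}$ is connected, $g$ has a continuous logarithm on $K$ which is holomorphic in $\mathring{K}$; by Mergelyan's theorem this logarithm is uniformly approximable on $K$ by polynomials, so it suffices to treat $g=e^{h}$ with $h$ holomorphic on an open neighborhood $U$ of $K$, where $\overline{U}$ is still compactly contained in $\{\frac{1}{2}<Re(s)<1\}$ with connected complement. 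It then suffices to exhibit a set of $n$ of positive lower density for which a suitable branch of $\log\zeta(s+in\delta)$ approximates $h(s)$ uniformly on $K$ within a small margin; exponentiating (using that $h$ is bounded on $K$) recovers the statement for $g$. By Cauchy's estimates, uniform approximation on $K$ follows from approximation in the norm $\|\cdot\|$ of the Bergman space $H_{2}(U)$ of square-integrable holomorphic functions on $U$, so the whole problem reduces to an $H_{2}$-statement.

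The two main ingredients are a \emph{discrete limit theorem} and a \emph{mean-square truncation estimate}. For the first, write $\zeta(s,\omega):=\prod_{p}(1-\omega_{p}p^{-s})^{-1}$ for $\omega=(\omega_{p})_{p}$ in the infinite torus $\Omega:=\prod_{p}\mathbb{T}$; this product converges in $H_{2}(U)$ for Haar-almost every $\omega$. The key arithmetic input is that $\{\log p:\ p\mbox{ prime}\}$ is linearly independent over $\mathbb{Q}$ (an immediate consequence of unique factorization), so that by the Kronecker--Weyl equidistribution theorem the sequence $\omega^{(n)}:=(p^{-in\delta})_{p}$ becomes equidistributed in $\Omega$ with respect to Haar measure $m_{H}$ as $n$ runs through $\{1,\dots,N\}$, $N\to\infty$; the only caveat is that for special values of $\delta$ a single prime $p_{0}$ with $\delta\log p_{0}\in 2\pi\mathbb{Z}$ produces a frozen coordinate $\omega_{p_{0}}\equiv 1$, which is harmless since it merely deletes one factor of the Euler product. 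Consequently, for each fixed truncation level $X$, the empirical distribution of $n\mapsto\log\zeta_{X}(s+in\delta)=\sum_{p\leq X}-\log(1-p^{-in\delta}p^{-s})$ converges weakly to the law under $m_{H}$ of $\log\zeta_{X}(s,\omega)$, since the latter depends continuously on only finitely many coordinates. For the second ingredient I would prove $\limsup_{N\to\infty}\frac{1}{N}\sum_{n\leq N}\|\log\zeta(s+in\delta)-\log\zeta_{X}(s+in\delta)\|^{2}\to 0$ as $X\to\infty$, by combining the classical second-moment (mean-value) estimates for $\zeta$ on vertical lines in $\{\frac{1}{2}<Re(s)<1\}$ with Gallagher's lemma, which transfers such continuous mean values to sums over the arithmetic progression $\{n\delta\}$. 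Finally, a Hilbert-space rearrangement argument of Pechersky type, together with the prime-number-theorem bound $\sum_{p\leq X}p^{-1}\sim\log\log X$, shows that the support in $H_{2}(U)$ of the random element $\log\zeta(s,\omega)$ is all of $H_{2}(U)$; in particular $h$ lies in this support, so $m_{H}(\{\omega:\ \|\log\zeta(s,\omega)-h\|<\epsilon'\})>0$ for every $\epsilon'>0$.

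Assembling these, fix $\epsilon'>0$. By the truncation estimate choose $X$ so large that the upper density of the set of $n$ with $\|\log\zeta(s+in\delta)-\log\zeta_{X}(s+in\delta)\|\geq\epsilon'$ is arbitrarily small; by the denseness of the support (and again the truncation estimate) one gets $m_{H}(\{\omega:\ \|\log\zeta_{X}(s,\omega)-h\|<\epsilon'\})>0$; hence, by the discrete limit theorem applied to this open set, the set of $n\leq N$ with $\|\log\zeta_{X}(s+in\delta)-h\|<\epsilon'$ has positive lower density. Removing the small-upper-density bad set coming from the truncation leaves a set of $n$ of positive lower density with $\|\log\zeta(s+in\delta)-h\|<2\epsilon'$, and translating back via Cauchy's estimates to uniform convergence on $K$ and exponentiating yields $\liminf_{N\to\infty}\frac{1}{N}\#\{n\leq N:\ \max_{s\in K}|g(s)-\zeta(s+in\delta)|<\epsilon\}>0$, as claimed.

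I expect the main obstacle to be the \emph{discrete} mean-value machinery underlying the second ingredient: unlike in Bagchi's continuous proof one cannot simply integrate in $\tau$, and one must invoke Gallagher's lemma together with sufficiently sharp second-moment bounds for $\zeta$ and its logarithm (which ultimately rest on zero-density estimates), while checking that all estimates are uniform in $N$ and in the truncation level. A related subtlety, also handled via zero-density estimates, is that $\log\zeta(s+in\delta)$ must be given a meaning: one needs that $\zeta(s+in\delta)$ is zero-free on a fixed neighborhood of $K$ for all but a set of $n$ of density zero. By contrast, the equidistribution input is essentially standard once the $\mathbb{Q}$-independence of $\{\log p\}$ is in hand, and the Pechersky-type denseness is identical to the continuous case. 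For this circle of ideas, see \cite{Bag1, Rei1, Rei2, St1, KarVo, Lau1}.
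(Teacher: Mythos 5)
First, a point of comparison: the paper does not prove Theorem \ref{Thm:Discruniv} at all; it states it as Reich's discrete universality theorem, citing \cite{Rei1, Rei2} and pointing to [\textbf{St1}, \S 5.7] for exposition. So your sketch can only be measured against the literature, and there it does follow the standard Bagchi-style route (\cite{Bag1, St1, KarVo, Lau1}): Mergelyan normalization of the target, a discrete limit theorem on the infinite torus $\Omega=\prod_{p}\mathbb{T}$, a Gallagher-type discrete mean-square truncation estimate, and a Pechersky-type support (denseness) theorem, assembled via the portmanteau inequality for open sets. As a sketch, most of these steps are the right ones, and you correctly flag the two technical burdens (discrete second-moment bounds, and giving $\log\zeta(s+in\delta)$ a meaning off a density-zero set of $n$ via zero-density estimates; alternatively one can follow Bagchi and work with $\zeta$ itself, identifying the support of its limit law as the non-vanishing functions together with $0$, which avoids logarithms of $\zeta$ altogether).

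The genuine gap is your treatment of the exceptional values of $\delta$, which is precisely the point that makes Reich's theorem hold for \emph{every} $\delta\neq 0$. The sequence $\omega^{(n)}=(p^{-in\delta})_{p}$ equidistributes not in $\Omega$ but in the closed subgroup $H$ generated by $(p^{-i\delta})_{p}$, and $H=\Omega$ if and only if there is no nonzero finitely supported integer vector $(k_{p})$ with $\delta\sum_{p}k_{p}\log p\in 2\pi\mathbb{Z}$. By the $\mathbb{Q}$-linear independence of $\{\log p\}$ the group of such relations is at most infinite cyclic, but its generator may involve \emph{several} primes: for $\delta=2\pi/\log 6$ one has $\delta(\log 2+\log 3)=2\pi$, so the character $\omega_{2}\omega_{3}$ is trivial on $H$ while no single coordinate is frozen. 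Thus the failure mode is a multiplicative character constraint, not a frozen coordinate, and \textquotedblleft deleting one Euler factor\textquotedblright\ is not a correct fix; as written, your argument breaks for such $\delta$. The repair stays within your framework but must be carried out: prove the discrete limit theorem with limiting measure the law of the random Euler product under Haar measure on $H$, and then re-prove the support theorem for that restricted law. This is feasible because the constraint involves only finitely many primes, so $H$ splits as a product of a finite-dimensional subgroup with the full tail torus, and the Pechersky rearrangement argument only uses the free tail coordinates (whose marginal under Haar on $H$ is still the full Haar measure); but this case analysis is exactly the delicate content of the discrete theorem and cannot be waved away. A minor further inaccuracy: the divergence input for the denseness step concerns sums of the type $\sum_{p}p^{-\sigma}$ (in a directional, Hilbert-space sense) for $\frac{1}{2}<\sigma<1$, not $\sum_{p\leq X}p^{-1}\sim\log\log X$.
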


\begin{remark}
To our knowledge, there is no direct relationship between the extended Voronin theorem for the universality of $\zeta(s)$ $($Theorem \ref{Thm:ExtVor}$)$ and Reich's discrete universality theorem $($Theorem \ref{Thm:Discruniv}$)$, in the sense that neither one of them implies the other. 
\end{remark}

\begin{remark}
We refer the reader to $[$\emph{\textbf{St1}}, \S5.7$]$ for a brief exposition of this subject and several additional references.
\end{remark}

\subsection{Further extensions to L-functions}

\hspace*{3mm}In this appendix, we briefly discuss some of the extensions of the universality of the Riemann zeta function to other elements of the Selberg class of zeta functions as well as to other types of zeta functions not belonging to the Selberg class.\footnote{A survey of some of the key definitions and properties of the Selberg class can be found in [\textbf{La5}, Appendix E], where many relevant references are also provided.}\\

\hspace*{3mm}Eminyan obtained an extension of Voronin's original theorem (Theorem \ref{VorThm}) to a class of Dirichlet-type $L$-functions whose Euler product are defined in terms of a finite number of primes; see \cite{Emi}.\\

\hspace*{3mm}Later on, an extension to the class of $L$-functions associated to cusp forms was obtained by Laurincikas and Matsumoto in \cite{LauMa2}.\,They obtained a universality theorem for $L$-functions attached to normalized eigenforms of the full modular group.\\

\hspace*{3mm}A further extension to new forms (or elliptic curves associated to modular forms) was obtained by Laurincikas, Matsumoto and Steuding in \cite{LauMaSt}.\,We next briefly state their result:\footnote{See, e.g., [\textbf{La5}, Appendix C] and the many relevant references therein for the terminology and definitions about modular forms which are used here.}

\begin{theorem}\label{Thm:elcurv}
Suppose that $f$ is a new form of weight $k$ and level $N$.\,Let $K$ be a compact subset of the strip $\{\frac{k}{2}<Re(s)<\frac{k+1}{2}\}$ with connected complement, and let $g(s)$ be a continuous non-vanishing function on $K$ which is analytic $($i.e., holomorphic$)$ in the interior of $K$.\,Then, given any $\epsilon>0$, there exists $\tau>0$ such that
\begin{equation}
\sup_{s\in K}|g(s)-L(s+i\tau,f)|<\epsilon.
\end{equation}

\hspace*{3mm} Moreover, the set of admissible $\tau$'s is infinite and, in fact, 
\begin{equation}
\liminf_{T\to \infty}vol_{1}\big(\{\tau\in[0,T]:\,\sup_{s\in K}|L(s+i\tau,f)-g(s)|<\epsilon\}\big)>0.
\end{equation}
\end{theorem}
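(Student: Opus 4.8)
The plan is to follow the Bagchi--Voronin probabilistic strategy, adapted to the newform $L$-function $L(s,f)$, as in the original argument of Laurincikas, Matsumoto and Steuding \cite{LauMaSt}. First I would normalize: set $\widetilde{L}(s,f):=L\big(s+\tfrac{k-1}{2},f\big)$, so that the critical strip $\{\tfrac{k}{2}<\Re(s)<\tfrac{k+1}{2}\}$ is carried onto the familiar strip $D_{0}:=\{\tfrac12<\Re(s)<1\}$; by Deligne's bound (the Ramanujan--Petersson estimate, valid for holomorphic newforms) the Satake parameters $\alpha_{1}(p),\alpha_{2}(p)$ of $\widetilde{L}$ satisfy $|\alpha_{j}(p)|=1$ for $p\nmid N$, and the Euler product $\widetilde{L}(s,f)=\prod_{p}\prod_{j=1,2}(1-\alpha_{j}(p)p^{-s})^{-1}$ converges absolutely for $\Re(s)>1$. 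It suffices to prove universality for $\widetilde{L}$ on the translated compact set $\widetilde{K}\subset D_{0}$; and since $\widetilde{K}^{c}$ is connected, Mergelyan's approximation theorem lets us further reduce to a target $g$ that is holomorphic and non-vanishing on an open neighbourhood $U$ of $\widetilde{K}$ (the case $\mathring{\widetilde K}=\emptyset$ being handled this way as well).

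Next I would set up the probabilistic framework. Let $\Omega=\prod_{p}\mathbb{T}$ be the infinite-dimensional torus with Haar probability measure $m_{H}$, and for $\omega=(\omega(p))\in\Omega$ define the random Euler product $\widetilde{L}(s,\omega,f):=\prod_{p}\prod_{j}(1-\alpha_{j}(p)\omega(p)p^{-s})^{-1}$. The two analytic inputs I would establish or quote are: (i) the mean-square bound $\int_{0}^{T}|\widetilde{L}(\sigma+it,f)|^{2}\,dt=O(T)$ for each $\sigma\in(\tfrac12,1)$, obtained from the functional equation of $L(s,f)$ (Hecke), the resulting approximate functional equation, and a standard convexity/mean-value argument; and (ii) the linear independence of $\{\log p:p\ \text{prime}\}$ over $\mathbb{Q}$, which makes $\{(p^{-it})_{p}:t\in\mathbb{R}\}$ equidistribute in $\Omega$. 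Combining (i), (ii), and an approximation (again via (i)) of $\widetilde{L}(s+it,f)$ in mean by its partial Euler products, one obtains the limit theorem: as $T\to\infty$ with $t$ uniform on $[0,T]$, the $H(U)$-valued random variable $t\mapsto\widetilde{L}(\cdot+it,f)$ converges in distribution to $\widetilde{L}(\cdot,\omega,f)$ with $\omega\sim m_{H}$, where $H(U)$ carries the topology of uniform convergence on compacta.

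The decisive step is to identify the support of this limiting law in $H(U)$ as exactly $\{h\in H(U):h\ \text{is non-vanishing on}\ U\}\cup\{0\}$. The inclusion $\subseteq$ is Hurwitz's theorem. For the reverse inclusion one writes $\log\widetilde{L}(s,\omega,f)=\sum_{p}g_{p}(s,\omega(p))$, shows this series of independent $H(U)$-valued terms converges $m_{H}$-almost surely (using $|\alpha_{j}(p)|=1$ and $\Re(s)>\tfrac12$), and then invokes a Hilbert-space rearrangement theorem of Pechersky type to conclude that the set of sums of the corresponding conditionally convergent series is dense in $H(U)$. The hypothesis that makes the rearrangement theorem applicable is the divergence of $\sum_{p}\|g_{p}\|$ in the relevant norm, which reduces to the estimate $\sum_{p\le x}|\alpha_{1}(p)+\alpha_{2}(p)|^{2}\asymp\pi(x)$; this follows from Rankin's theorem that the Rankin--Selberg $L$-function $L(s,f\times\overline{f})$ has a simple pole at the edge (equivalently $\sum_{p\le x}\lambda_{f}(p)^{2}\sim\pi(x)$, with $\lambda_{f}(p):=a(p)p^{-(k-1)/2}$). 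Transporting this denseness from $\log\widetilde{L}$ back to $\widetilde{L}$ among non-vanishing functions, and verifying the divergence condition via the Rankin--Selberg pole, is the main technical obstacle; the mean-value bound (i) and its consequence that the contribution of the archimedean/functional-equation part is controllable are the other places where the specific arithmetic of $f$ is used.

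Finally I would conclude as in the scalar case. Since $g$ is non-vanishing and holomorphic on $U$, it lies in the support of the limiting probability measure $P$, so the open set $\mathcal{O}_{\epsilon}:=\{h\in H(U):\sup_{s\in\widetilde{K}}|h(s)-g(s)|<\epsilon\}$, which contains $g$, has $P(\mathcal{O}_{\epsilon})>0$. By the limit theorem together with the portmanteau theorem for weak convergence,
\[
\liminf_{T\to\infty}\frac{1}{T}\,vol_{1}\big(\{\tau\in[0,T]:\sup_{s\in\widetilde{K}}|\widetilde{L}(s+i\tau,f)-g(s)|<\epsilon\}\big)\ \ge\ P(\mathcal{O}_{\epsilon})\ >\ 0 .
\]
Translating back by $(k-1)/2$ yields both the existence of an admissible $\tau\geq0$ and the positive lower density of the set of such $\tau$, as asserted.
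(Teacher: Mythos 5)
The paper does not prove this theorem: it is stated in Appendix B as a survey result quoted from Laurincikas, Matsumoto and Steuding \cite{LauMaSt}, with no argument given. Your outline correctly reconstructs the proof from that reference --- the shift by $(k-1)/2$ to the strip $\{\frac{1}{2}<Re(s)<1\}$, Mergelyan reduction, Bagchi's limit theorem for the random Euler product on the infinite torus, identification of the support via Hurwitz and a Pechersky-type rearrangement theorem with the Rankin--Selberg pole supplying the divergence (positive-density) condition, and the portmanteau argument for the positive lower density --- so it is consistent with, indeed more detailed than, what the paper provides.
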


\hspace*{3mm}An extension to Dirichlet series with multiplicative coefficients was obtained by Laurincikas and Slezeviciene in \cite{LauSlez}.\\

\hspace*{3mm}A number of additional results concerning the extensions and applications of the universality theorem to $L$-functions can be found in Steuding's monograph \cite{St1}.\\

\hspace*{3mm}Finally, we point out the fact that extensions to other families of zeta functions not necessarily belonging to the Selberg class of zeta functions (such as the family of Hurwitz zeta functions) were also obtained.\,Given $\alpha\in(0,1]$ and for $Re(s)>1$, the Hurwitz zeta function is defined by
\begin{equation}\label{Eq:Hurwz}
\zeta(s,\alpha)=\sum_{m=0}^{\infty}\frac{1}{(m+\alpha)^{s}}.
\end{equation} 

\hspace*{3mm}This function has a meromorphic continuation to the whole complex plane.\,It has a simple pole at $s=1$ with residue equal to 1.\,Note that for $\alpha=1$, we have $\zeta(s,1)=\zeta(s)$, the Riemann zeta function, and that for $\alpha=\frac{1}{2}$, $\zeta(s,\frac{1}{2})$ is given (for all $s\in\mathbb{C}$) by
\begin{equation}
\zeta(s,\frac{1}{2})=(2^{s}-1)\zeta(s).
\end{equation}
Clearly, for $\alpha=1$ and $\alpha=\frac{1}{2}$, $\zeta(s,\alpha)$ has an Euler product expansion.\,This is \emph{not} the case, however, for $\alpha\in(0,1]-\{\frac{1}{2},\,1\}$.\,As a result, except for $\alpha\in\{\frac{1}{2},\,1\}$, the Hurwitz zeta function, defined by Equation (\ref{Eq:Hurwz}), is not an element of the Selberg class of zeta functions.\\

\hspace*{3mm}An extension of Theorem \ref{Thm:ExtVor} to the class of Hurwitz zeta functions for the case $\alpha\in(0,1]-\{\frac{1}{2},1\}$, where $\alpha$ is \emph{rational} or \emph{transcendental}, was obtained independently by Gonek in \cite{Gon} and Bagchi in \cite{Bag1}.\,Their result can be stated as follows:

\begin{theorem}\label{ThmHurw}
Suppose $\alpha\in(0,1]-\{\frac{1}{2},1\}$ is either \emph{rational} or \emph{transcendental}.\,Let $K\subset\{\frac{1}{2}<Re(s)<1\}$ be a compact subset with connected complement.\,Let $g(s)$ be a continuous function on $K$ which is analytic $($i.e., holomorphic$)$ in the interior of $K$.\,Then, for every $\epsilon>0$, we have
\begin{equation}\label{Hurwapp}
\liminf_{T\to+\infty}vol_{1}\big(\{\tau\in[0,T]:\,\sup_{s\in K}|g(s)-\zeta(s+i\tau,\alpha)|<\epsilon\}\big)>0.
\end{equation}
\end{theorem}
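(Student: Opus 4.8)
The plan is to follow the probabilistic method of Bagchi, treating separately the two cases allowed by the hypothesis, namely $\alpha$ transcendental and $\alpha$ rational. A preliminary and routine reduction applies to both: since $K$ has connected complement and $g$ is continuous on $K$ and holomorphic in its interior, Mergelyan's theorem allows us to approximate $g$ uniformly on $K$ by a polynomial $P$, so it suffices to prove the density statement (\ref{Hurwapp}) with $P$ in place of $g$. In contrast with Voronin's theorem for $\zeta(s)$, no non-vanishing hypothesis is imposed on $g$ here; this is possible precisely because $\zeta(s,\alpha)$ has no Euler product when $\alpha\notin\{\frac12,1\}$, so the limit law appearing below will have full support. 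Throughout, fix an open set $D$ with $K\subset D\subset\{\frac12<Re(s)<1\}$ and work in the Fréchet space $H(D)$ of holomorphic functions on $D$, endowed with the topology of uniform convergence on compacta.

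For $\alpha$ transcendental, the essential arithmetic input is that $\{\log(m+\alpha):m\geq0\}$ is linearly independent over $\mathbb{Q}$: a relation $\prod_m(m+\alpha)^{c_m}=1$ with $c_m\in\mathbb{Z}$ not all zero would force an identity between two distinct monic polynomials evaluated at the transcendental number $\alpha$, hence between the polynomials themselves, contradicting unique factorization. By Weyl's criterion, this makes the curve $\tau\mapsto\bigl((m+\alpha)^{-i\tau}\bigr)_{m\geq0}$ uniformly distributed in the infinite torus $\Omega=\prod_{m\geq0}\{z\in\mathbb{C}:|z|=1\}$ with its Haar probability measure. Combining this with mean-square estimates permitting one to replace $\zeta(s+i\tau,\alpha)$ by smoothed finite sums $\sum_m v_n(m)(m+\alpha)^{-s}$, one establishes a limit theorem: as $T\to\infty$, the distribution on $H(D)$ of $\tau\mapsto\zeta(\,\cdot+i\tau,\alpha)$ converges weakly to the law of the random element $\zeta(s,\alpha,\omega):=\sum_{m\geq0}\omega_m(m+\alpha)^{-s}$, $\omega\in\Omega$. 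One then shows that the support of this limit law is all of $H(D)$. Granting this, the set $\{f\in H(D):\sup_{s\in K}|f(s)-P(s)|<\epsilon\}$ is open and has positive measure under the limit law, so a standard portmanteau argument (first establishing the bound with $\epsilon/2$ and a non-strict inequality, then relaxing) yields the positive lower density in (\ref{Hurwapp}).

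For $\alpha$ rational, write $\alpha=a/q$ in lowest terms with $q\geq2$ (possible since $\alpha\neq1$) and use the identity $\zeta(s,a/q)=\frac{q^{s}}{\phi(q)}\sum_{\chi\bmod q}\bar\chi(a)L(s,\chi)$. Here the numbers $\log(m+\alpha)$ are no longer $\mathbb{Q}$-linearly independent, so the relevant probabilistic model is the one attached to the primes, and the clean route is to invoke the joint universality theorem for the Dirichlet $L$-functions $\{L(s,\chi):\chi\bmod q\}$ (Voronin, Gonek, Bagchi): for any admissible non-vanishing targets $g_\chi$ on $K$, the set of $\tau$ such that $L(s+i\tau,\chi)$ simultaneously approximates $g_\chi$ has positive lower density. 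Solving, by orthogonality of characters, the single linear relation $\sum_\chi\bar\chi(a)g_\chi\approx\frac{\phi(q)}{q^{s}}P$ with the $g_\chi$ constrained to be non-vanishing — for which there is enough freedom, one component being reserved to absorb $P$ and the remaining ones chosen as suitable non-vanishing exponentials — one obtains targets to which joint universality applies; summing the simultaneous approximations, and noting the lower-density estimate survives this uniformly bounded (on $K$) linear combination, gives (\ref{Hurwapp}) for $\zeta(s+i\tau,a/q)$.

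The step I expect to be the main obstacle is the support computation in the transcendental case, i.e., showing that the random Dirichlet series $\sum_m\omega_m(m+\alpha)^{-s}$ is dense in $H(D)$. The tool is a Pecherskii-type rearrangement theorem for series in a Fréchet space, whose hypotheses require that $\sum_m\|(m+\alpha)^{-s}\|^{2}<\infty$ while $\sum_m\|(m+\alpha)^{-s}\|=\infty$ on each relevant compact subset of $D$ (both automatic since $\frac12<Re(s)<1$ there), together with a non-degeneracy condition ensuring that no non-trivial finitely supported pattern of linear functionals coming from complex measures on $D$ annihilates the family $\{(m+\alpha)^{-s}\}$; verifying the latter — via estimates for generating sums such as $\sum_m(m+\alpha)^{-s}z^{m}$ and a Montel/Bernstein-type argument — is the genuine technical heart of the proof. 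A secondary delicate point, in the rational case, is arranging that the linear-algebra reduction yields genuinely admissible (non-vanishing) targets $g_\chi$ without losing control of the lower-density bound.
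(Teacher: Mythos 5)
The paper itself gives no proof of Theorem \ref{ThmHurw}: it is quoted from the literature (Gonek's thesis and Bagchi's thesis), so your proposal can only be measured against the standard arguments there. Your transcendental case follows that standard route faithfully: Mergelyan reduction, $\mathbb{Q}$-linear independence of $\{\log(m+\alpha)\}$ (your unique-factorization argument is the right one), Weyl equidistribution on the infinite torus, a Bagchi-type limit theorem in $H(D)$, full support of the limit law via a Pechersky-type rearrangement theorem, and a portmanteau/positive-lower-density conclusion; you also correctly identify the support computation as the technical heart. (Minor remark: the paper's display (\ref{Hurwapp}) has no $\frac{1}{T}$ normalization, so you are in fact proving the stronger, and standard, density form — that is fine.)

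The rational case, as you wrote it, has a genuine gap. From $\zeta(s+i\tau,a/q)=\frac{q^{s+i\tau}}{\phi(q)}\sum_{\chi\bmod q}\bar\chi(a)L(s+i\tau,\chi)$, the shift produces the uncontrolled unimodular constant $q^{i\tau}$, which your "sum the simultaneous approximations" step ignores: joint universality for the $L(s,\chi)$ alone only gives $\zeta(s+i\tau,a/q)\approx q^{i\tau}P(s)$ on $K$, i.e. approximation of a rotation of the target by an arbitrary phase. You must additionally force $q^{i\tau}$ to be close to $1$ for the \emph{same} set of $\tau$'s, and intersecting two sets of positive lower density does not suffice (they may be disjoint). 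The standard fixes are either a joint limit/universality theorem that also prescribes the torus coordinates $p^{-i\tau}$ for the finitely many primes $p\mid q$ — available precisely because every character mod $q$ vanishes at those primes, so these coordinates are asymptotically independent of the $L$-values — or a Reich-type discrete joint universality with step $\delta=2\pi/\log q$, which makes $q^{in\delta}=1$ identically. A second, smaller slip: $\alpha\neq\frac{1}{2}$ forces $q\geq 3$, hence $\phi(q)\geq 2$; it is this, not merely $\alpha\neq 1$ (your "$q\geq 2$"), that provides the freedom to choose non-vanishing targets $g_{\chi}$ with $\sum_{\chi}\bar\chi(a)g_{\chi}$ equal to the prescribed (possibly vanishing) function — for $q=2$ there is a single character and both the reduction and the theorem fail, consistent with the exclusion of $\alpha=\frac{1}{2}$.
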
 

\begin{remark}
Note that in the statement of Theorem \ref{ThmHurw} and in contrast to the case of the Riemann zeta function $($and other $L$-functions$)$, the function $g$ is allowed to have zeros inside the compact set $K$.\footnote{This is not all that surprising since except for $\alpha\in\{\frac{1}{2},1\}$, the Hurwitz zeta function is not expected to satisfy the Riemann hypothesis.}
\end{remark}

\hspace*{3mm}In view of Equation $($\ref{Hurwapp}$)$, the Hurwitz zeta function $\zeta(s, \alpha)$ can uniformly approximate target functions which may have zeros inside the compact subset $K$.\,Hence, the Hurwitz zeta function is an example of a mathematical object which is `\emph{strongly universal}'.\,The theory of strong universality has been developed in several directions.\,We note that the results obtained within our functional analytic framework about the truncated infinitesimal shifts $\partial_{c}^{(T)}$ and the truncated spectral operators $\mathfrak{a}_{c}^{(T)}=\zeta(\partial_{c}^{(T)})$ can also be used to provide an operator-theoretic extension of the notion of \emph{strong universality}.

\section{Appendix C: Almost Periodicity and the Riemann Hypothesis}

\hspace*{3mm}Let $f$ be a holomorphic complex-valued function on some vertical strip $S_{a,\,b}=\{s\in\mathbb{C}:\,a<Re(s)<b\}$.\,Then, $f$ is said to be \emph{almost periodic} if for every $\epsilon>0$ and any $\alpha$, $\beta$ such that $a<\alpha<\beta<b$, there exists $\ell(f, \alpha, \beta, \epsilon)>0$ such that in every interval $(t_{1}, t_{2})$ of length $\ell$, there exists a number $\tau\in(t_{1}, t_{2})$ such that for any $\alpha\leq x\leq \beta$ and any $y\in\mathbb{R}$, we have 
\begin{equation}
|f(x+iy+i\tau)-f(x+iy)|<\epsilon.\notag
\end{equation}

\hspace*{3mm}The notion of almost periodicity was introduced by H.\,Bohr in \cite{Boh1}.\,He proved that any Dirichlet series is almost periodic in its half-plane of absolute convergence.\,Moreover, he showed in \cite{Boh2} that the almost periodicity of the class of Dirichlet $L$-functions $L(s,\chi)$ with \emph{non-trivial} character $\chi$ (i.e., $\chi\ne1$) is intimately connected with the location of the critical zeros of the Riemann zeta function:\footnote{Recall that the Dirichlet $L$-function (or Dirichlet $L$-series) is initially defined by $L(s,\chi):=\sum_{n=1}^{\infty}\frac{\chi(n)}{n^{s}}$ for $Re(s)>1$; see, e.g., \cite{Pat}, \cite{Ser}.}

\begin{theorem}\label{Thm:9.1}
Given any character $\chi\ne1$, then $L(s, \chi)$ is almost periodic in the half-plane $\{Re(s)>\frac{1}{2}\}$ if and only if the Riemann hypothesis is true.
\end{theorem}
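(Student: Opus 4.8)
The plan is to establish the equivalence by following Bohr's original strategy \cite{Boh2}: deduce almost periodicity from the Riemann hypothesis through the Dirichlet series representation of $\log L(s,\chi)$, and deduce the Riemann hypothesis from almost periodicity through the recurrence of values forced by the relatively dense set of $\epsilon$-almost-periods.

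\emph{The implication RH $\Rightarrow$ almost periodicity.} First I would use that, under the Riemann hypothesis for $L(s,\chi)$, this function is zero-free in $\{Re(s)>\frac{1}{2}\}$, so a single-valued branch of $\log L(s,\chi)$ is holomorphic there and agrees, for $Re(s)>1$, with the Dirichlet series $\sum_{n\ge2}\frac{\Lambda(n)\chi(n)}{(\log n)\,n^{s}}$. The analytic input is the explicit-formula bound $\psi(x,\chi):=\sum_{n\le x}\Lambda(n)\chi(n)=O\bigl(x^{1/2}(\log x)^{2}\bigr)$, a standard consequence of RH for $L(s,\chi)$; by partial summation it forces the above Dirichlet series to converge throughout $\{Re(s)>\frac{1}{2}\}$ and to represent $\log L(s,\chi)$ there. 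I would then invoke Bohr's theorem on the almost periodicity of a Dirichlet series in its domain of convergence to conclude that $\log L(s,\chi)$ is almost periodic in $\{Re(s)>\frac{1}{2}\}$, and finally transfer this property to $L(s,\chi)=\exp\bigl(\log L(s,\chi)\bigr)$, using that the relevant class of almost periodic functions is stable under composition with the exponential.

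\emph{The implication almost periodicity $\Rightarrow$ RH.} Here I would argue by contradiction. Assume $L(s,\chi)$ is almost periodic in $\{Re(s)>\frac{1}{2}\}$ but has a zero $\rho_{0}=\beta_{0}+i\gamma_{0}$ with $\beta_{0}>\frac{1}{2}$. Choose $r>0$ with $\frac{1}{2}<\beta_{0}-r$ and with $L(\cdot,\chi)$ nonvanishing on the circle $|s-\rho_{0}|=r$, and set $\delta_{0}:=\min_{|s-\rho_{0}|=r}|L(s,\chi)|>0$. By almost periodicity, the set of $\delta_{0}$-almost-periods $\tau$ (those for which $|L(s+i\tau,\chi)-L(s,\chi)|<\delta_{0}$ on a fixed closed substrip containing the disc $|s-\rho_{0}|\le r$) is relatively dense in $\mathbb{R}$; for each such $\tau$, Rouch\'e's theorem on $|s-\rho_{0}|=r$, together with $L(\rho_{0},\chi)=0$, produces a zero of $L(\cdot,\chi)$ in the disc $|s-(\rho_{0}+i\tau)|<r$. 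Extracting from this relatively dense set a subfamily $0<\tau_{1}<\tau_{2}<\cdots$ with consecutive gaps bounded above and below by fixed positive constants makes the discs $|s-(\rho_{0}+i\tau_{n})|<r$ pairwise disjoint and contained in $\{Re(s)>\beta_{0}-r\}$, yielding $\gg T$ zeros of $L(\cdot,\chi)$ in $\{Re(s)\ge\beta_{0}-r,\ 0<Im(s)\le T\}$ as $T\to\infty$. Since $\beta_{0}-r>\frac{1}{2}$, this contradicts the classical zero-density estimate $N_{L}(\sigma_{1},T)=o(T)$ valid for every fixed $\sigma_{1}>\frac{1}{2}$. Hence $L(\cdot,\chi)$ has no zeros in $\{Re(s)>\frac{1}{2}\}$, which is the Riemann hypothesis for $L(s,\chi)$.

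\emph{Main obstacle.} I expect the crux to be not the logical skeleton above but the precise choice of the notion of almost periodicity and its reconciliation with the fact that $L(s,\chi)$ is unbounded on every vertical substrip of $\{\frac{1}{2}<Re(s)<1\}$ (a consequence of its universality). Because of this, the passage from $\log L$ to $L=\exp(\log L)$ in the first implication, and the very notion of a ``fixed closed substrip'' in the second, must be interpreted within Bohr's mean-value (Besicovitch $B^{2}$) framework rather than the naive uniform one; the bulk of the technical work will lie in verifying that exponentiation and the Rouch\'e--recurrence argument survive that passage, and in citing Bohr's theorem on Dirichlet series and the density bound $N_{L}(\sigma_{1},T)=o(T)$ in exactly the forms required.
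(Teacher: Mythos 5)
You should first note that the paper itself contains no proof of Theorem \ref{Thm:9.1}: it is quoted in a survey appendix as Bohr's classical result \cite{Boh2}, with only a remark hinting at the mechanism (convergence of the relevant Dirichlet series/Euler product, plus the Rouch\'e-type argument alluded to in Remark \ref{Rk:2.4} and after Theorem \ref{RHUniv}). Measured against that intended argument, your sketch has the right skeleton for the implication ``almost periodicity $\Rightarrow$ RH'' (Rouch\'e on a small circle around a putative zero, relatively dense shifts, and a zero-density bound $N_{L}(\sigma_{1},T)=o(T)$), but there is a genuine gap which you flag yourself and never close: no single notion of almost periodicity is fixed for which \emph{both} implications are proved, and the specific tools you invoke do not all apply to any one notion. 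In the forward direction, Bohr's theorem on almost periodicity of Dirichlet series holds in the half-plane of \emph{uniform} (e.g.\ absolute) convergence --- exactly as stated in Appendix C --- not in the half-plane of conditional convergence; under RH the series for $\log L(s,\chi)$ converges only conditionally in $\{Re(s)>\frac{1}{2}\}$, and in fact uniform almost periodicity of $L$ (or of $\log L$) on any substrip of the critical strip is \emph{unconditionally false}, since uniform almost periodicity forces boundedness there while universality (which extends to Dirichlet $L$-functions, cf.\ Appendix B) makes $L(s,\chi)$ unbounded on every such substrip. So that step cannot be rescued by a sharper citation; with the literal substrip definition of Appendix C the forward implication would already be vacuously wrong, which shows that pinning down the right notion is not a technicality but the content of the theorem.

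Your proposed remedy --- reinterpreting everything in the Besicovitch $B^{2}$ framework --- then breaks the other half: a $B^{2}$-almost period gives no sup-norm control on the circle $|s-\rho_{0}|=r$, so Rouch\'e does not apply, and exponentiation $L=\exp(\log L)$ does not preserve mean-square almost periodicity without boundedness; you acknowledge this (``the bulk of the technical work will lie in verifying\dots'') but that is precisely the unproved crux. The notion under which the equivalence is true, and the one intended by Bohr and used by Bagchi in Theorem \ref{RHUniv}, is recurrence measured uniformly on \emph{compact} subsets of $\{Re(s)>\frac{1}{2}\}$, with a relatively dense (indeed positive-lower-density) set of shifts $\tau$ depending on the compact set and on $\epsilon$; this does not force boundedness, so the obstruction disappears. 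With that notion your Rouch\'e--zero-density argument goes through essentially verbatim, while the forward direction is obtained not through almost periodicity of a conditionally convergent Dirichlet series but either as in the remark following Theorem \ref{RHUniv} --- zero-freeness of $L(s,\chi)$ in $\{Re(s)>\frac{1}{2}\}$ plus the unconditional universality theorem for Dirichlet $L$-functions (the analogue of Theorem \ref{Thm:ExtVor}) applied to the target $g:=L(\cdot,\chi)$ itself --- or, as in Bohr's original route, from the RH-implied convergence of $\sum_{p}\chi(p)p^{-s}$ (equivalently, of the Euler product) in $\{Re(s)>\frac{1}{2}\}$ combined with Kronecker's theorem to manufacture the relatively dense shifts. (A minor point: ``the Riemann hypothesis'' in the statement means the Riemann hypothesis for $L(s,\chi)$ itself, as you implicitly assumed.)
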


\begin{remark}
The notion of almost periodicity was introduced by H.\,Bohr as an analytic tool for proving the Riemann hypothesis.\,We note that his approach failed for the case of the Riemann zeta function but led to a reformulation of the Riemann hypothesis for the class of Dirichlet $L$-functions associated to a non-trivial character.\,$($See Theorem \ref{Thm:9.1} just above.$)$\,In contrast to the Riemann zeta function, which has a pole at $s=1$ and whose Dirichlet series and Euler product converge only for $Re(s)>1$, the Dirichlet $L$-functions with non-trivial characters are holomorphic for $Re(s)>0$ $($even in all of $\mathbb{C}$ if the Dirichlet character $\chi$ is primitive$)$ and have a Dirichlet series and an Euler product which converge for $Re(s)>0$ and in particular, inside the critical strip\emph{:}
\begin{equation}
L(s, \chi)=\sum_{n=1}^{\infty}\frac{\chi(n)}{n^{s}}=\prod_{p\in\mathcal{P}}(1-\chi(p)p^{-s})^{-1},\mbox{\quad for\,\,}Re(s)>0.\notag
\end{equation} 

\hspace*{3mm}A key fact, also established by Bohr $($and central to his reformulation of RH$)$ is that $($still for a nontrivial character$)$ $L(s,\,\chi)$ is almost periodic in the critical strip $($and actually, for $Re(s)>0$$)$ because its Euler product is convergent there.\footnote{The Euler product converges absolutely for $Re(s)>1$ and conditionally for $Re(s)>0$; see, e.g., \cite{Ser}.\,It is the convergence $($and not the absolute convergence$)$ which is essential here.}
\end{remark}

\begin{remark}
The concept of almost periodicity is key to the proof and understanding of the universality of $\zeta$ and of other $L$-functions.\,In fact, toward the beginning of the 20th century, Bohr's theory of almost periodicity was already used by Harald Bohr and his collaborators in order to obtain several interesting results concerning the Riemann zeta function $($and other Dirichlet $L$-functions$)$, such as the density of the range of $\zeta(s)$ along the vertical lines $\{Re(s)=c\}$, with $\frac{1}{2}<c<1$ $($see Theorem \ref{Thmdens} and the discussion preceding it in \S2.3$)$.\,These results and their extensions, which eventually led to Voronin's universality theorem $($and its various generalizations$)$, make use of finite Euler products $($for $\zeta(s)$, say$)$ within the critical strip $\{0<Re(s)<1\}$.\,$($See, e.g., \cite{St1} for a detailed exposition.$)$
\end{remark}

\bibliographystyle{amsalpha}

\end{document}